\documentclass[11pt, a4paper]{amsart}

\usepackage[foot]{amsaddr}

\usepackage{amsfonts,amssymb, url, amsthm, subcaption, hyperref, tikz, amsmath,
bigdelim, xcolor, appendix, environ, stmaryrd, mathabx, enumerate}

\usepackage{mathrsfs}

\usepackage[utf8]{inputenc}
\usepackage[T1]{fontenc}
\usepackage[top=3cm, bottom=3cm, left=3cm, right=3cm]{geometry}


\newtheorem{theorem}{Theorem}[section]
\newtheorem{lemma}[theorem]{Lemma}
\newtheorem{proposition}[theorem]{Proposition}
\newtheorem{corollary}[theorem]{Corollary}
\newtheorem{definition}[theorem]{Definition}
\newtheorem{assumption}[theorem]{Assumption}
\newtheorem{remark}[theorem]{Remark}
\newtheorem{notation}[theorem]{Notation}
\theoremstyle{definition}


\newcommand{\RR}{\mathbb{R}}
\newcommand{\NN}{\mathbb{N}}
\newcommand{\ZZ}{\mathbb{Z}}
\newcommand{\YY}{\mathbb{Y}}
\newcommand{\EE}{\mathbb{E}}

\newcommand{\PP}{\mathbb{P}}
\newcommand{\TT}{\mathbb{T}}

\newcommand{\mL}{\mathcal{L}}

\newcommand{\mP}{\mathcal{P}}

\newcommand{\mI}{\mathcal{I}}
\newcommand{\mY}{\mathcal{Y}}
\newcommand{\mF}{\mathcal{F}}
\newcommand{\mD}{\mathcal{D}}

\newcommand{\mS}{\mathcal{S}}
\newcommand{\mB}{\mathcal{B}}
\newcommand{\mA}{\mathcal{A}}




\newcommand{\reso}{\varodot}



\newcommand{\ve}{\varepsilon}

\newcommand{\vt}{\vartheta}

\newcommand{\bigslant}[2]{{\raisebox{.1em}{$#1$}\left/\raisebox{-.1em}{$#2$}\right.}}
\newcommand*{\ud}{\mathrm{\,d}}

\makeatletter
\newcommand\RSloop{\@ifnextchar\bgroup\RSloopa\RSloopb}
\makeatother
\newcommand\RSloopa[1]{\bgroup\RSloop#1\relax\egroup\RSloop}
\newcommand\RSloopb[1]%
  {\ifx\relax#1%
   \else
     \ifcsname RS:#1\endcsname
       \csname RS:#1\endcsname
     \else
       \GenericError{(RS)}{RS Error: operator #1 undefined}{}{}%
     \fi
   \expandafter\RSloop
   \fi
  }
\newcommand\X{0}
\newcommand\RS[1]%
  {\begin{tikzpicture}
     [every node/.style=
       {circle,draw,fill,minimum size=1.5pt,inner sep=0pt,outer sep=0pt},
      line cap=round
     ]
   \coordinate(\X) at (0,0);
   \RSloop{#1}\relax
   \end{tikzpicture}
  }
\newcommand\RSdef[1]{\expandafter\def\csname RS:#1\endcsname}
\newlength\RSu
\RSu=1ex
\RSdef{i}{\draw (\X) -- +(90:\RSu) node{};}
\RSdef{l}{\draw (\X) -- +(110:\RSu) node{};}
\RSdef{r}{\draw (\X) -- +(70:\RSu) node{};}
\RSdef{I}{\draw (\X) -- +(90:\RSu) coordinate(\X I);\edef\X{\X I}}
\RSdef{L}{\draw (\X) -- +(120:\RSu) coordinate(\X L);\edef\X{\X L}}
\RSdef{R}{\draw (\X) -- +(60:\RSu) coordinate(\X R);\edef\X{\X R}}

\newcommand{\TA}{\scalebox{0.8}{\RS{lr}}}
\newcommand{\TB}{\scalebox{0.8}{\RS{rLrl}}}
\newcommand{\TC}{\scalebox{0.8}{\RS{rLrLrl}}}
\newcommand{\TD}{\scalebox{0.8}{\RS{{Lrl}{Rrl}}}}

\begin{document}

\title{Synchronization for KPZ}

\author{Tommaso Cornelis Rosati}
\address{Humboldt-Universit\"at zu Berlin}

\thanks{This paper was developed within the scope of the IRTG 1740 /
TRP 2015/50122-0, funded by the DFG / FAPESP}

\keywords{KPZ Equation; Burgers' Equation; Random dynamical systems;
Krein-Rutman theorem; One force one solution; Ergodicity}

\subjclass[2010]{60H15; 37L55}

\begin{abstract} 
  We study the longtime behavior of KPZ-like
  equations:
  \[ 
    \partial_{t}h(t,x) = \Delta_{x} h (t, x) + | \nabla_{x}h
    (t,x)|^{2} + \eta(t, x), \qquad h(0, x) = h_0(x), \qquad (t, x) \in (0,
    \infty) \times \TT^{d}, 
  \] 
  on the \(d-\)dimensional torus \(\TT^{d}\) driven by an ergodic noise
  \(\eta\) (e.g. space-time white in \(d= 1\)). The analysis builds
  on infinite-dimensional extensions of similar results for positive random
  matrices. We establish a one force, one solution principle and derive almost
  sure synchronization with exponential deterministic speed in appropriate H\"older spaces.
\end{abstract}

\maketitle
\setcounter{tocdepth}{1} 

\section*{Introduction}

The present article concerns the study of stochastic partial differential
equations (SPDEs) of the form:
\begin{equation}\label{eqn:intro_kpz}
  (\partial_{t} {-} \Delta_{x})h(t,x)= | \nabla_{x} h|^{2} (t, x) +
  \eta(t, x), \qquad h(0, x) = h_0(x), \qquad (t, x) \in
  (0, \infty) \times \TT^{d}, 
\end{equation}
where \(\eta\) is a random forcing and \(\TT^{d}\) the $d$-dimensional torus.
In dimension \(d=1\) with \(\eta\) space-time white noise the above is
known as the Kardar-Parisi-Zhang (KPZ) equation, and is a renowned model in the study of growing
interfaces. It is related to a universality conjecture
\cite{QuastelSpohn2015KpzUniversality}, according to which many asymmetric
growth models are described, on large spatial and temporal scales, by a common
universal object: the so-called KPZ fixed point. The KPZ equation is itself the
scaling limit of many such processes in a \textit{weakly asymmetric} regime
and it is expected to converge to the mentioned fixed point on large
scales. 

This picture motivates in part interest behind the longtime behavior
of equations of type \eqref{eqn:intro_kpz}. Another motivation comes
Burgers'-like equations. These are toy models in fluid dynamics, and are
formally linked to KPZ by $v = \nabla_x h$:
\begin{equation}\label{eqn:intro_burgers}
  (\partial_{t} {-} \Delta_{x})v(t, x) = \nabla_{x}|v|^{2}(t, x) +
  \nabla_{x} \eta(t, x), \quad v(0,x) = v_0(x), \quad (t,x) \in
  (0, \infty) \times \TT^{d}.
\end{equation}

In the case of the original KPZ
equation, wellposedness 
\cite{Hairer2013SolvingKPZ, Hairer2014, GubinelliImkellerPerkowski2015} was a
milestone.
Preceding these results there was no clear understanding of the quadratic nonlinearity in
\eqref{eqn:intro_kpz}, yet the equation could be studied through
the Cole-Hopf transform, by imposing that \(u = \exp (h)\) solves the linear
stochastic heat equation (SHE) with multiplicative noise (a step that can be made rigorous for 
  smooth \(\eta\) but requires particular care and the introduction of
renormalization constants if \(\eta\) is space-time white noise):
\begin{equation}\label{eqn:intro_she}
  (\partial_{t} {-} \Delta_{x})u(t, x) = \eta (t, x) u (t, x), \qquad u(0,x) =
  u_0(x), \qquad(t, x) \in (0, \infty) \times \TT^{d}.
\end{equation}
In addition to proving wellposedness of the KPZ equation, \cite{Hairer2014}
introduces the notion of \textit{subcriticality}, which provides a formal condition on
\(\eta\) under which Equations~\eqref{eqn:intro_kpz} and \eqref{eqn:intro_she} remain well-posed.
Recent works show that this condition is indeed sufficient
  \cite{BrunedHairerZambotti2019AlgebraicRenormalisation,
  BrunedChandraChevyrevHairer2017RenormalisinSPDEs,
ChandraHairer2016AnalyticBPHZ}.

In consideration of these results, we will not discuss
questions regarding the wellposedness of~\eqref{eqn:intro_kpz}. Instead, our aim is
to prove results concerning the longtime behavior
of solutions, under the assumption that a solution map to
the SHE~\eqref{eqn:intro_she} is given and satisfies some natural requirements
that will be introduced below.

The linearization of \eqref{eqn:intro_kpz} provides additional
structure to the equation that allows to prove strong ergodic
properties. The first simple observation is that
\eqref{eqn:intro_kpz} is shift invariant, meaning that any ergodic
property will be proved either ``modulo constants'', that is identifying two
functions \(h, h^{\prime} : [0, \infty) \times \TT^{d} \to \RR\) if there exists
a \(c \colon [0, \infty) \to \RR\) such that \(h(t,x) -
h^{'}(t,x) = c(t), \forall t,x\), or for the gradient \(v = \nabla_{x} h\),
which satisfies Burgers' equation.

On one hand, unique ergodicity of the KPZ Equation ``modulo constants'' was
first established in \cite{HairerMattingly2018Feller} as a
consequence of a strong Feller property that holds for a wide class of SPDEs.
Moreover, for the KPZ equation the invariant measure
is known to be the Brownian bridge \cite{FunakiQuastel2015KPZInvariant}. In
addition, \cite{GubinelliPerkowski2018GeneratorBurgers} proves a spectral gap for
Burgers' equation, implying
exponential convergence to the invariant measure, although the article
restricts to initial conditions that are ``near-stationary''.

On the other hand, \cite{Sinai1991Buergers} considers the noise
\(\eta(t, x) = V(x) \partial_{t} \beta (t)\) for \(V \in C^{\infty}(\TT^{d})\) and a
Brownian motion \(\beta\). The article shows that there exists a random function
\( \overline{v}(t,x)\) defined for all \(t \in \RR\) such that almost surely,
independently of the initial conditions \(v_0\) within a certain class:
\[ \lim_{t \to \infty}  v(t, x) - \overline{v}(t, x) = 0,\] 
for all \(x \in \TT^{d}\) and with \(v\) solving~\eqref{eqn:intro_burgers}.
This property is referred to as \textit{synchronization}. 
In addition, if one starts Burgers' equation at time \({-} n\) with \(v^{{-} n}( {-} n, x) =
v_{0}(x)\):
\[ \lim_{n \to \infty} v^{{-} n}(t, x) = \overline{v}(t, x), \qquad \forall (t, x) \in
({-} \infty, \infty) \times \TT^{d}.\] 
This property is called a \textit{one force, one solution} principle (1F1S) and it
implies that \(\overline{v}\) is the unique (ergodic)
solution to \eqref{eqn:intro_burgers} on \(\RR\). Results of this
kind have subsequently been generalized in many directions, most notably to the
inviscid case 
\cite{WeinanKhaninSinai2000BurgersInviscous} or to infinite volume, for
example in \cite{BakhtinCatorKhanin2014Burgers} and
recently in \cite{DunlapGrahamRyzhik2019stationary}, for specific classes
of noises. 

We will attempt to understand and extend the results in
\cite{Sinai1991Buergers}
through an application of the theory of random dynamical systems. The power of
this approach lies in the capacity of treating any noise \(\eta\) such that:
\begin{enumerate}[(i)]
  \item The noise \(\eta\) is ergodic: see Proposition
    \ref{prop:mixing_conditions} for a classical condition if \(\eta\) is
    Gaussian.

  \item Equation \eqref{eqn:intro_she} is almost surely well-posed: there exists
    a unique, global in time solution for every \(u_{0} \in C(\TT^{d})\), the
    solution map being a linear, compact, strictly positive operator on
    \(C(\TT^{d})\).
\end{enumerate}
In particular, \(\eta\) can be chosen
to be space-time white noise or a noise that is fractional in time.

In the original work \cite{Sinai1991Buergers},
the solution \(u\) to \eqref{eqn:intro_she} evaluated at time \(n\) is
represented by \(u(n, x) =
A^{n} u_{0} (x)\) for a compact strictly positive operator \(A^{n}\). The proof of the
result makes use in turn of the explicit representation of the operator
\(A^{n}\) via the Feynman-Kac formula. Such representation becomes more technical when the noise \(\eta\) is
not smooth and requires some understanding of random polymers (cf. 
\cite{CannizzaroChouk2018Multidimensional, DelarueDiel2016OneDimensionalSDE}
for the case of space-time white noise). 

In this work we will avoid the
language of random polymers.
If \(\eta\) were a time-independent noise, the synchronization of the solution
\(v\) to \eqref{eqn:intro_burgers} would
amount to the convergence, upon rescaling, of \(u\) to the random eigenfunction of
\(A^{1}\) associated to its largest eigenvalue: an instance of the Krein-Rutman
Theorem. We will extend this argument to the non-static case with
an application of the theory of random dynamical systems. The key tool is a
contraction principle for
positive operators in projective spaces under Hilbert's projective metric (see
\cite{Bushell1973} for an overview). Such method was already deployed in \cite{ArnoldGundlachDemetrius1994} and later
refined by \cite{Hennion1997} in the study of random matrices. Their proofs
naturally extend to the infinite-dimensional case, giving rise to an ergodic
version of the Krein-Rutman theorem: see Theorem \ref{thm:random_krein_rutman}. 

In this way one obtains exponential synchronization and
1F1S ``modulo constants'' for the KPZ equation: in an example with smooth noise we show
  that the constants can be chosen time-independent, a fact that we expect to
hold in general. The exponential speed is deterministic and related to the
contraction principle.

Some complications show up when proving
convergence in appropriate H\"older spaces, depending on the regularity of the
driving noise: see Theorem \ref{thm:synchronization_for_kpz}. This step
requires for example a bound on the average:
\[ \EE \sup_{x \in \TT} |h(t,x)|,\]
for fixed \(t >0\). In concrete
examples we show how a control on this term can be obtained from a quantitative version
of a strong maximum principle for \eqref{eqn:intro_she}: in particular the case
of space-time white noise requires some care and the proof makes use of the
path-wise solution theory to the equation. Although the study of convergence in H\"older
spaces seems to be new, for different reasons moment bounds of the likes of the one above appeared already in
\cite{ArnoldGundlachDemetrius1994}, but are simpler to
verify in the finite-dimensional case.

As we already mentioned, the examples we treat are the original KPZ equation, namely the case of
\(\eta\) being space-time white noise in \(d=1\), and the case of
\(\eta(t, x) = V(x) \ud \beta^{H}_{t}\) for \(\beta^{H}\) a fractional Brownian
motion of Hurst parameter \(H > \frac{1}{2}\) and \(V \in C^{\infty}(\TT)\). In the latter
case the solution is not Markovian, and ergodic results are rare, see for
example \cite{MaslowskiPospivsil2008ErgodicityFractional} for ergodicity of linear SPDEs
with additive fractional noise.

There are several instances of applications of the
theory of random dynamical systems to stochastic PDEs. Particularly related to
our work is the study of order-preserving systems which admit some random
attractor \cite{ArnoldChueshov1998OrderPreserving,
FlandoliGessScheutzow2017SynchroByNoiseOrderPreserving, ButkovskyScheutzow2019}.  The spirit of these results is
similar to ours. Yet, although the linearity of \eqref{eqn:intro_she} on one hand
guarantees order preservation, on the other hand it does not allow the existence
of a single random attractor. In this sense, our essentially linear case appears to be a
degenerate example of the synchronization addressed in the works above.

\proof[Acknowledgements] The author is very grateful to Nicolas Perkowski for
inspiring this work and providing numerous insights and helpful comments. Many
thanks also to Benjamin Gess for several interesting discussions, and to
the anonymous reviewer that has carefully read the work and suggested many
improvements.

\section{Notations}

Let \(\NN = \{1,2, \dots\}, \ \NN_{0} = \NN \cup \{0\}\), 
\(\mathbb{R}_{+} = [0, {+} \infty)\) and \(\iota = \sqrt{ - 1} \). Furthermore,
for \(d \in \NN\) let \(\TT^{d}\) be the torus \(\TT^{d} =
\bigslant{\RR^{d}}{\ZZ^{d}}\) (here \(\ZZ^{d}\) acts by translation on
\(\RR^{d}\)). The case \(d =1\) is of particular interest, so we write
\(\TT = \TT^{1}\).

For a general set \( \mathcal{X}\) and functions \(f, g  \colon \mathcal{X} \to
\RR\) write \(f \lesssim g\) if \( f(x) \leq C g(x)\) for all \(x \in
\mathcal{X}\) and a constant \(C>0\) independent of \(x\). To clarify on which
parameters \(C\) is allowed to depend we might add them as subscripts to the
``\(\lesssim\)'' sign.

For \(\alpha>0\) let \(\lfloor \alpha \rfloor\) be the smallest
integer beneath \(\alpha\) and for a multiindex \(k \in \NN^{d}\) write
\(|k| = \sum_{i = 1}^{d} k_i\).
Denote with \(C(\TT^{d})\) the space of continuous real-valued
functions on \(\TT^{d}\), and, for \(\alpha>0\), with \(C^{\alpha}(\TT)\) the
space of \( \lfloor \alpha \rfloor-\) differentiable functions \(f\) such that
\( \partial^{k} f\) is \((\alpha {-} \lfloor \alpha \rfloor)-\)H\"older
continuous for every multiindex \(k \in \NN^{d}\) such that \(|k| = \lfloor
\alpha \rfloor\), if \(\alpha {-} \lfloor \alpha \rfloor>0\), or simply
continuous if \(\alpha \in \NN_{0}\). For \(\alpha \in \RR_{+}\) we obtain the
following seminorms on \(C^{\alpha}(\TT^{d})\):
\[ 
  [ f]_{\alpha} = \max_{ |k|= \lfloor \alpha \rfloor} \ \ \| \partial^{k} f
  \|_{\infty} 1_{ \{|k|>0\}} + \sup_{x, y \in
  \TT^{d}}\frac{|\partial^{k} f(x) {-} \partial^{k} f(y)|}{|x {-} y|^{\alpha {-}
  \lfloor \alpha \rfloor}}. 
\] 
We write \(C^{\infty}(\TT^{d}) = \bigcap_{k \in \NN} C^{k}( \TT^{d})\).

Now, let \(X\) be a Banach space. We denote with \(\mB(X)\) the Borel
\(\sigma-\)algebra on \(X\). Let \([a,b] \subseteq \RR\) be an
interval, then define 
\(C([a,b]; X)\) the space of continuous functions \(f  \colon [a,b] \to
X\). For any \(O \subseteq \RR\), we write \(C_{\mathrm{loc}}(O; X)\) for the
space of continuous functions with the topology of uniform
convergence on all compact subsets of \(O\). Given two Banach spaces
\(X, Y\) denote with \(\mL(X ; Y)\) the space of linear bounded operators \(A \colon
X \to Y\) with the classical operator norm. If \(X=Y\) we write simply \(
\mL(X)\). 

Next we introduce Besov spaces. Following \cite[Section
2.2]{BahouriCheminDanchin2011FourierAndNonLinPDEs} choose a smooth
dyadic partition of the unity on \(\RR^{d}\) (resp. \(\RR^{d {+} 1}\)) \( (
\chi, \{\varrho_{j}\}_{j \geq 0})\)  and define \(\varrho_{{-} 1}
= \chi\) and define the Fourier transforms for \(f \colon \TT^{d} \to \RR\) and
\(g  \colon \RR \times \TT^{d} \to \RR\):
\begin{align*} 
  \mF_{\TT^{d}} f (k) &= \int_{\TT^{d}} e^{ {-} 2 \pi \iota \langle k, x \rangle}
  f(x) \ud x, \qquad k \in \ZZ^{d}, \\
  \mF_{\RR \times \TT^{d}} g (s, k) & = \int_{\RR \times \TT^{d}} \! \! \!
  e^{ {-} 2 \pi \iota
  (s t {+} \langle k, x \rangle )} g(t, x) \ud t \ud x, \qquad (s, k) \in
  \RR \times \ZZ^{d}.
\end{align*}
These definitions extend naturally to
spatial (resp.\ space-time) tempered distributions \(\mS^{ \prime}(\TT^{d}) \) (resp.
\(\mS^{\prime}(\RR \times\TT^{d})\)), which are the topological duals of Schwartz
functions: \(\mS(\TT^{d}) = C^{\infty}(\TT^{d})\) and
\[ \mS(\RR \times \TT^{d}) =
\Big\{ \varphi  \ \colon  \sup_{t \in \RR, x \in \TT^{d}} \big\{ (1+|t|)^{p}|
\partial_{x}^{\mu} \varphi (t,x)|\big\} < \infty, \ \forall p \geqslant 0, \mu \in
\NN_{0}^{d+1} \Big\}.\] 
Similarly one defines the respective inverse Fourier transforms
\(\mF_{\TT^{d}}^{-1}\) and \(\mF_{\RR \times \TT^{d}}^{-1}\). Then define the
spatial (resp.\ space-time) Paley blocks: 
\begin{align*}
  \Delta_{j} f(x) = \mF^{{-} 1}_{\TT^{d}}[ \varrho_{j} \cdot \mF_{\TT^{d}}f]
  (x), \qquad \Delta_{j}g (t, x) = \mF^{{-} 1}_{\RR \times \TT^{d}} [\varrho_{j}
  \cdot \mF_{\RR \times \TT^{d}} g](t,x).
\end{align*}
Eventually one defines, for \(\alpha \in \RR, \ a>0, \ p,q \in [1, \infty]\), the spaces \(B^{\alpha}_{p, q} (\TT^{d})\) and
\(B^{\alpha, a}_{p, q}(\RR \times \TT^{d})\) as the set of tempered distributions such
that, respectively, the following norms are finite:
\begin{align*} 
  \| f \|_{B^{\alpha}_{p, q}(\TT^{d})} & = \| (2^{j \alpha} \| \Delta_{j} f
  \|_{L^{p}(\TT) } )_{j \geq {-} 1}\|_{\ell^{q}}, \\
  \| g \|_{B^{\alpha, a }_{p, q} (\RR \times \TT^{d})} & = \| ( 2^{j \alpha} \| \Delta_{j}f( \cdot)/
  \langle \cdot \rangle^{a} \|_{L^{p}(\RR \times \TT^{d})})_{j \geq {-} 1}
  \|_{\ell^{q}}, 
\end{align*} 
where we denote with \(\langle (t,x) \rangle\) the weight \( \langle (t, x)
\rangle = 1 {+} |t|\). For \(p=q=2\) one obtains the Hilbert spaces \(H^{\alpha} (\TT^{d}) = B^{\alpha}_{2,
2}(\TT^{d})\) and 
\begin{equation}\label{eqn:definition-space-time-H-space}
H^{\alpha}_{a}(\RR \times \TT^{d})= B^{\alpha,
a}_{2,2}(\RR \times \TT^{d}).
\end{equation}
One can also consider functions that depend on time only and introduce, for the
same range of parameters, the spaces \(B^{\alpha, a}_{p, q}(\RR)\) via the norm:
\[\| f \|_{B^{\alpha, a }_{p, q} (\RR )} = \| ( 2^{j \alpha} \| \Delta_{j}f( \cdot)/
  \langle \cdot \rangle^{a} \|_{L^{p}(\RR)})_{j \geq {-} 1}
  \|_{\ell^{q}}, \qquad \langle t \rangle= 1+|t|. \]
Here the Paley blocks are defined by \(\Delta_{j} f(t) =
\mF_{\RR}^{-1}(\varrho_{j} \cdot \mF_{\RR} f)(t),\) for a dyadic partition of the
unity \(\{\varrho_{j}\}_{j \geqslant -1}\) on \(\RR\). As above we then define
\begin{equation}\label{eqn:definition-time-H-space}
\begin{aligned}
H^{\alpha}_{a}(\RR) = B^{\alpha, a}_{2,2}(\RR).
\end{aligned}
\end{equation}
Finally, recall that for \(p=q= \infty\) and \(\alpha \in
\RR_{+} \setminus \NN_{0}\): \(B^{\alpha}_{\infty, \infty}(\TT^{d}) =
C^{\alpha}(\TT^{d})\) (see e.g. \cite[Chapter 2]{Triebe2010}).

\section{Setting}

This section, based on \cite{Bushell1973}, introduces the projective space of positive continuous functions
and a related contraction principle for strictly positive operators. Let \(X\) be a Banach
space and \( K \subseteq X\) a closed cone such that \(K \cap ({-} K) =
\{0\} \). Denote with \(\mathring{K}\) the interior of \(K\)
and write \(K^{+} = K \setminus \{0\}\). Such cone induces a partial order in \(X\) by
defining for \(x, y \in X\): 
\[ 
  x \leq y \Leftrightarrow y {-} x \in K \ \ \text{ and } \ \
  x < y \Leftrightarrow y {-} x \in \mathring{K}.
\] 
Consider for \(x, y \in K^{+}\): 
\[ M(x, y) = \inf \{ \lambda \geq 0  \colon x \leq \lambda y\}, \qquad m(x, y)
  = \sup \{ \mu \geq 0 \colon \mu y \leq x\}, 
\] 
with the convention \(\inf \emptyset = \infty\). Then \(M(x, y) \in (0,
\infty]\) and \(m(x, y) \in [0, \infty)\) so that one can define Hilbert's
projective distance: 
\[ d_{H}(x, y) = \log{(M (x, y))} {-} \log{ ( m(x,y))} \in [0, \infty],
  \qquad \forall \ x, y \in K^{+}.
\] 
This metric is only semidefinite positive on \(K^{+}\), and may be infinite. A
remedy for the first issue is to consider an affine
space \(U \subseteq X\) which intersects transversely \(K^{+}\), that is: 
\[ \forall x \in K^{+}, \ \ \ \ \exists ! \lambda > 0
  \ \ \ \ \text{ s.t. } \ \ \ \ \lambda x \in U. 
\] 
Write \(\lambda(x)\) for the normalization constant above. As for the second
issue, one can observe that the distance is finite on the interior of
\(K\), cf. \cite[Theorem 2.1]{Bushell1973}, and thus, defining \(E =
\mathring{K} \cap U\), one has that \((E, d_{H})\) is a metric space.  

Consider now \(\mL(X)\) the set of linear bounded operators on
\(X\), and for an operator \(A \in \mL(X)\) the following conventions define
different concepts of positivity: 
\[ 
  \begin{aligned} A(K) & \subseteq K
    \qquad \Rightarrow \qquad A \text{ nonnegative. } \\
    A(\mathring{K}) & \subseteq \mathring{K} \qquad \Rightarrow \qquad A
    \text{ positive. } \\
    A(K^{+}) & \subseteq \mathring{K} \qquad \Rightarrow \qquad A \text{
    strictly positive. } 
  \end{aligned} 
\] 
The projective action of a positive operator \(A\) on \(X\) is then defined by:
\(A^{\pi} x = Ax / \lambda(A x)\). One can view \(A^{\pi}\) as a map
\(A^{\pi} \colon E \to E\) and one then denotes with \( \tau(A)\) the projective
norm associated to \(A\): 
\begin{equation}\label{eqn:birkhoff_contraction} 
  \tau(A) = \sup_{ \substack{x, y \in E \\ x \neq y}} \frac{d_{H}(A^{\pi} x, A^{\pi}
  y)}{d _{H} (x, y)}.  
\end{equation} 
The backbone of our approach is Birkhoff's theorem for positive operators
\cite[Theorem 3.2]{Bushell1973}, which is stated below.
\begin{theorem}\label{thm:Birkhoff_positive_operators}
  Let \(\Delta(F)\) denote the diameter of a set \(F \subseteq E\):
  \[ 
    \Delta(F) = \sup_{x, y \in F} \{ d_{H}(x,y)\}.
  \]
  The following identity holds:
\[
  \tau(A) = \mathrm{tanh} \Big( \frac{1}{4} \Delta (A^{\pi} (E)) \Big) \leq 1.
\] 
\end{theorem}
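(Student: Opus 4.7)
Plan. I would prove the identity $\tau(A) = \tanh(\Delta(A^\pi(E))/4)$ (the bound by $1$ is then immediate from $\tanh \leq 1$), splitting it into matching upper and lower bounds and exploiting the scaling invariance $d_H(A^\pi x, A^\pi y) = d_H(Ax, Ay)$ throughout. The crude contraction $\tau(A) \leq 1$ follows from order preservation: for $x, y \in E$ with $\alpha := m(x,y), \beta := M(x,y)$, the sandwich $\alpha y \leq x \leq \beta y$ is preserved by the nonnegative $A$, giving $m(Ax, Ay) \geq \alpha$ and $M(Ax, Ay) \leq \beta$, hence $d_H(Ax, Ay) \leq d_H(x,y)$.

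For the sharp upper bound $\tau(A) \leq \tanh(\Delta/4)$, I decompose $x = \alpha y + x_1 = \beta y - x_2$ with $x_1, x_2 \in K$, so that $(\beta-\alpha)y = x_1 + x_2$ and $(\beta-\alpha)x = \beta x_1 + \alpha x_2$. Applying $A$ and setting $m_0 := m(Ax_1, Ax_2), M_0 := M(Ax_1, Ax_2)$, a direct manipulation of the partial order refines the crude bounds to
\[
m(Ax, Ay) \geq \frac{\alpha + \beta m_0}{1 + m_0}, \qquad M(Ax, Ay) \leq \frac{\alpha + \beta M_0}{1 + M_0}.
\]
Since $Ax_1, Ax_2 \in A(K^+) \subseteq \mathring{K}$ lie projectively in $\overline{A^\pi(E)}$, the diameter condition forces $M_0/m_0 \leq e^\Delta$. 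Writing $c := e^{\Delta/2}$ and $r := e^{d_H(x,y)} = \beta/\alpha$, optimizing $M(Ax, Ay)/m(Ax, Ay)$ as a function of $(m_0, M_0)$ under the ratio constraint yields at most $((c\sqrt{r}+1)/(\sqrt{r}+c))^2$, the extremum being attained at $m_0 = e^{-\Delta/2}/\sqrt{r}$, $M_0 = e^{\Delta/2}/\sqrt{r}$. A short derivative check then verifies $\log((cs+1)/(s+c)) \leq \tfrac{c-1}{c+1}\log s$ for all $s \geq 1$ (equality at $s = 1$; the derivative comparison reduces to $(s-1)^2 \geq 0$), which is precisely $d_H(Ax, Ay) \leq \tanh(\Delta/4)\, d_H(x,y)$.

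For the matching lower bound, given $\varepsilon > 0$ pick $\widetilde{x}_1, \widetilde{x}_2 \in E$ with $d_H(A\widetilde{x}_1, A\widetilde{x}_2) \geq \Delta - \varepsilon$, rescaled so that $m(A\widetilde{x}_1, A\widetilde{x}_2) \cdot M(A\widetilde{x}_1, A\widetilde{x}_2) = 1$; then test the bound on the pair $y := \widetilde{x}_1 + \widetilde{x}_2$ and $x := (1+\delta)\widetilde{x}_1 + (1-\delta)\widetilde{x}_2$ for small $\delta > 0$. An elementary computation yields $\alpha = 1-\delta$, $\beta = 1+\delta$, $x_1 = 2\delta \widetilde{x}_1$, $x_2 = 2\delta \widetilde{x}_2$, so that both refined bounds above are now equalities. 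A Taylor expansion in $\delta$ then produces
\[
\lim_{\delta \to 0} \frac{d_H(Ax, Ay)}{d_H(x,y)} = \frac{M_0 - m_0}{(M_0+1)(m_0+1)} = \tanh\!\left(\frac{\Delta - \varepsilon}{4}\right),
\]
via the half-angle identities $2\sinh(\tfrac{\Delta}{2}) = 4\sinh(\tfrac{\Delta}{4})\cosh(\tfrac{\Delta}{4})$ and $1 + \cosh(\tfrac{\Delta}{2}) = 2\cosh^2(\tfrac{\Delta}{4})$. Letting $\varepsilon \to 0$ closes the identity.

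The main obstacle is the sharp algebraic inequality that collapses to $\tanh(\Delta/4)$: both the upper-bound derivative check and the lower-bound Taylor expansion rest on the same hyperbolic half-angle identities, reflecting that Hilbert's metric on any projective segment inside $\mathring{K}$ is isometric to the hyperbolic metric on an interval, so the worst case of the infinite-dimensional projective contraction is attained on this one-dimensional model. The rest of the argument is bookkeeping with the refined bounds and the diameter constraint.
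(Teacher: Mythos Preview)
The paper does not prove this theorem: it is quoted verbatim as Birkhoff's contraction theorem and attributed to \cite[Theorem~3.2]{Bushell1973}. So there is no in-paper argument to compare against; you have supplied the classical Birkhoff proof that the paper outsources.

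Your upper-bound argument is the standard one and is correct. One small point: you invoke $A(K^{+})\subseteq \mathring{K}$, which is the definition of \emph{strict} positivity, whereas the theorem as stated only assumes $A(\mathring{K})\subseteq \mathring{K}$. Since $x_1=x-\alpha y$ and $x_2=\beta y-x$ lie on $\partial K$ by construction, you actually need a limiting argument (approximate $x_1,x_2$ from inside $\mathring{K}$) to conclude $d_H(Ax_1,Ax_2)\le \Delta$. This is routine, and in the paper's applications $A$ is strictly positive anyway.

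The lower bound has a genuine gap. You pick $\tilde x_1,\tilde x_2\in E\subseteq\mathring{K}$ and then assert $\alpha=m(x,y)=1-\delta$, $\beta=M(x,y)=1+\delta$. This is false for interior points: a direct computation gives
\[
\alpha=1+\delta\,\frac{m_1-1}{m_1+1},\qquad \beta=1+\delta\,\frac{M_1-1}{M_1+1},
\]
with $m_1=m(\tilde x_1,\tilde x_2)>0$ and $M_1=M(\tilde x_1,\tilde x_2)<\infty$ since $\tilde x_i\in\mathring K$. Your claimed values require $m_1=0$, $M_1=\infty$, i.e.\ $\tilde x_1,\tilde x_2\in\partial K$. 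With the correct $\alpha,\beta$ the Taylor limit of $d_H(Ax,Ay)/d_H(x,y)$ acquires an unwanted denominator depending on $m_1,M_1$, and your displayed limit is no longer justified. The fix is to push $\tilde x_1,\tilde x_2$ to the boundary of the two-dimensional section through any near-diametral pair $u,v\in E$: the resulting boundary points satisfy $m_1=0$, $M_1=\infty$ exactly, while $d_H(A\tilde x_1,A\tilde x_2)\ge d_H(Au,Av)$ by monotonicity of the Hilbert metric along the segment, so the near-diametral condition is preserved. After this correction your $\delta\to0$ computation goes through verbatim.
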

Then denote with \( \mL_{\mathrm{cp}}(X)\) the space of positive operators \(A\)
which are contractive in \((E, d_{H})\):
\[ 
  A \in \mL_{\mathrm{cp}}(X) \ \ \Leftrightarrow \ \ A \in \mL(X), \ \ A
  \text{ positive, } \ \  \tau(A) <1. 
\] 

The only example considered in this work is \(X = C(\TT^{d})\) the space of
real-valued continuous functions on the torus, where \(K\) is the cone of positive
functions. Here the following holds.

\begin{lemma}\label{lem:completeness_and_properties_positive_fcts}
  Let \(X = C(\TT^{d})\) and \(K = \{ f \in X \ \colon \ f(x) \geq 0, \ \  \forall x
  \in \TT^{d}\}\), and consider:
  \[ 
    U = \Big\{ f \in X \colon \int_{\TT^{d}} f(x)
    \ud x = 1 \Big\}.   
  \]
  For the associated metric space \((E, d_{H})\) the following inequality
  holds:
  \begin{equation}\label{eqn:log_hilbert_distance_equivalence} \| \log{ (f)} {-}
    \log{(g)} \|_{\infty} \leq d_{H} (f, g) \leq  2\| \log{ (f)} {-} \log{ (g)}
    \|_{\infty}, \qquad \forall f, g \in E.  
  \end{equation} 
  In particular, \((E, d_{H})\) is a \textit{complete} metric space. In
  addition, if a strictly positive operator \(A\) can be represented by a
  kernel, i.e.\ there exists \(K \in C(\TT^{d} \times \TT^{d})\) such that:
  \[ A (f) (x) = \int_{\TT^{d}} K(x,y) f(y) \ \ud y, \ \ \ \forall x \in
  \TT^{d} \] 
  and there exits constants \(0 < \alpha \leq \beta < \infty\) such that 
  \[ \alpha \leq K(x,y) \leq \beta, \ \ \ \forall x, y \in \TT^{d},\]
  then \(A\) is contractive, i.e. \(A \in \mL_{\mathrm{cp}} (X)\).
\end{lemma}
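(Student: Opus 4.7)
The plan is to split the lemma into its three claims and handle each in turn. For the two-sided inequality, I would first observe that since elements $f, g \in E$ are strictly positive and continuous on the compact torus, the quantities $M(f,g)$ and $m(f,g)$ are attained and given explicitly by
\[
M(f,g) = \sup_{x \in \TT^{d}} \frac{f(x)}{g(x)}, \qquad m(f,g) = \inf_{x \in \TT^{d}} \frac{f(x)}{g(x)}.
\]
Setting $\varphi = \log f - \log g \in C(\TT^{d})$, this gives $d_{H}(f,g) = \sup_{x} \varphi(x) - \inf_{x} \varphi(x)$. The right inequality then follows trivially, while for the left inequality the key point is that the normalization $\int_{\TT^d} f = \int_{\TT^d} g = 1$ forces $\varphi$ to take both a nonnegative and a nonpositive value (otherwise one would have $\int f > \int g$ or $\int f < \int g$), so that $\min(\sup \varphi, -\inf \varphi) \geq 0$ and consequently $\|\varphi\|_{\infty} = \max(\sup \varphi, -\inf \varphi) \leq \sup \varphi - \inf \varphi = d_{H}(f,g)$.

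For completeness of $(E, d_H)$, I would exploit the equivalence just proved together with the fact that $C(\TT^{d})$ is a Banach space. Given a Cauchy sequence $(f_{n})_{n \geq 0}$ in $(E, d_{H})$, the left inequality in \eqref{eqn:log_hilbert_distance_equivalence} ensures that $(\log f_{n})_{n \geq 0}$ is Cauchy in $C(\TT^{d})$ under the uniform norm, and so converges uniformly to some $\psi \in C(\TT^d)$. Then $f_n \to e^{\psi}$ uniformly, hence $e^{\psi}$ is strictly positive and continuous, and passing to the limit in $\int_{\TT^d} f_n = 1$ yields $e^{\psi} \in E$. The right inequality in \eqref{eqn:log_hilbert_distance_equivalence} finally gives $d_H(f_n, e^{\psi}) \leq 2 \|\log f_n - \psi\|_{\infty} \to 0$.

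For the contraction property, I would use Theorem~\ref{thm:Birkhoff_positive_operators}: it suffices to show that $\Delta(A^{\pi}(E)) < \infty$. Since $d_{H}$ is invariant under multiplication by positive scalars, $d_{H}(A^{\pi}f, A^{\pi}g) = d_{H}(Af, Ag)$ for any $f,g \in E$, so I would estimate the latter directly. Using $\alpha \leq K(x,y) \leq \beta$ and the fact that $f, g \in E$ integrate to one,
\[
\frac{A f(x)}{A g(x)} = \frac{\int_{\TT^{d}} K(x,y) f(y) \ud y}{\int_{\TT^{d}} K(x,y) g(y) \ud y} \leq \frac{\beta}{\alpha}, \qquad \text{and similarly} \qquad \frac{Af(x)}{Ag(x)} \geq \frac{\alpha}{\beta}.
\]
This implies $d_{H}(Af, Ag) \leq \log(\beta/\alpha) - \log(\alpha/\beta) = 2 \log(\beta/\alpha)$, and hence $\Delta(A^{\pi}(E)) \leq 2 \log(\beta/\alpha) < \infty$. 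By Theorem~\ref{thm:Birkhoff_positive_operators}, $\tau(A) \leq \tanh\!\bigl(\tfrac{1}{2} \log(\beta/\alpha)\bigr) < 1$.

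The most delicate point is the explicit identification of $M$ and $m$ via pointwise sup and inf: it relies on continuity and strict positivity, which is why one works in $E = \mathring{K} \cap U$ rather than on all of $K^{+}$. Everything else is a direct application of the structure already set up in the excerpt.
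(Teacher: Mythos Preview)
Your proof is correct and follows essentially the same approach as the paper: the inequality via the observation that the normalization forces $\log(f/g)$ to change sign, completeness via the induced Cauchy property of $(\log f_n)$ in $C(\TT^d)$, and the contraction via a finite-diameter bound. The only difference is that for the kernel statement the paper simply cites \cite[Section~6]{Bushell1973}, whereas you spell out the diameter bound $\Delta(A^{\pi}(E)) \leq 2\log(\beta/\alpha)$ directly and invoke Theorem~\ref{thm:Birkhoff_positive_operators}; this is precisely the argument in Bushell, so your version is just more self-contained.
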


\begin{proof}
  As for the inequality, since \(f,g \in U\) (and hence \(\smallint f(x) \ud x
  = \smallint g(x) \ud x = 1\)), there exists a point \(x_{0}\) such that
  \(f(x_0) = g(x_0)\). In particular in the sum 
  \begin{align*} 
    \max \left( \log{ (f/g)} \right) {-} \min \left( \log{(f/g)}  \right) =
    \max \left( \log{ (f/g)} \right) {+} \max \left( \log{(g/f)}  \right)
  \end{align*} both terms are
  positive an bounded by \( \| \log{(f)} {-} \log{(g)} \|_{\infty}\).
  Conversely we have that: 
  \begin{align*} 
    \| \log{(f)} {-} \log{(g)}
    \|_{\infty} \leq \max \left( \log{(f)} {-} \log{(g)}  \right) + \max
    \left( \log{(g)} {-} \log{(f)}  \right).  
  \end{align*}

  Completeness of \( (E, d_{H})\) is a consequence of
  Inequality~\eqref{eqn:log_hilbert_distance_equivalence}: for a given
  Cauchy sequence \(f_{n} \in E\) the sequence \(\log (f_{n})\) is a Cauchy
  sequence in \(C(\TT^{d})\). By completeness of the latter there exists a
  \(g \in C(\TT^{d})\) such that \(\log (f_{n}) \to g\). By dominated
  convergence \(\exp (g) \in E\), and hence \(f_{n} \to \exp (g)\) in \(E\).

  The result regarding the kernel can be found in \cite[Section
  6]{Bushell1973}.
 
\end{proof}

\begin{remark}

  For the sake of simplicity we did not address the general
  question of completeness of the space \((E, d_H)\), since in the case of
  interest to us completeness follows from
  \eqref{eqn:log_hilbert_distance_equivalence}. Yet general criteria for
  completeness are known, see for example \cite[Section 4]{Bushell1973} and the
references therein.

\end{remark}
 
\begin{remark}

  In view of \eqref{eqn:birkhoff_contraction}, an application of Banach's fixed
  point theorem in \((E, d_{H})\) to operators satisfying the conditions of
  Lemma~\ref{lem:completeness_and_properties_positive_fcts}
  delivers the existence of a unique
  positive eigenfunction for \(A\). This is a variant of the Krein-Rutman theorem. The
  formulation we propose here is convenient because of its natural extension to
  random dynamical systems.

\end{remark}

\section{A Random Krein-Rutman Theorem}

In this section we reformulate the results of \cite{ArnoldGundlachDemetrius1994,
Hennion1997}, which refer to the case of positive random matrices, for positive
operators on Banach spaces.

An \emph{invertible metric discrete dynamical system} (IDS) \( (\Omega, \mF,
\PP, \vt)\) is a probability space \( ( \Omega, \mF, \PP)\) together with a
measurable map \(\vt \colon \ZZ \times \Omega \to \Omega\) such that \(\vt(z {+}
z^{\prime}, \cdot) = \vt(z, \vt(z^{\prime}, \cdot))\) and \(\vt(0, \omega) =
\omega\) for all \(\omega \in \Omega\), and such that \(\PP\) is invariant under
\(\vt(z, \cdot)\) for \(z \in \ZZ\). For brevity we write \(\vt^{z}( \cdot)\)
for the map \(\vt(z, \cdot)\). A set \( \widetilde{\Omega} \subseteq \Omega\) is
said to be \emph{invariant} for \(\vt\) if \(\vt^{z} \widetilde{\Omega} =
\widetilde{\Omega}\), for all \(z \in \ZZ\). An IDS is said to be
\emph{ergodic} if any invariant set \( \widetilde{\Omega}\) satisfies \(\PP(
\widetilde{\Omega}) \in \{ 0, 1\}\) (cf. \cite[Appendix
A]{Arnold1998RandomDynamicalSystems}).

Consider \(X, E\) as in the previous section and, for a given IDS, a random
variable \(A  \colon \Omega \to \mL(X)\). This
generates a measurable, linear, discrete random dynamical system (RDS) (see
\cite[Definition 1.1.1]{Arnold1998RandomDynamicalSystems}) \(\varphi\) on \(X\)
by defining:
\begin{equation}\label{eqn:discrete-dynamical-system-structure}
 \varphi_{n}(\omega) x = A( \vt^{n} \omega) \cdots A(\omega) x, \qquad n \in
\NN_{0}.
\end{equation}
If \(A( \omega )\) is in addition positive for every \(\omega \in \Omega\) (we
then simply say that \(A\) is positive), we can interpret \(\varphi\) as an RDS
on \(E\) via the projective action:
\[ \varphi_{n}^{\pi} (\omega) x = A^{\pi}( \vt^{n} \omega) \circ \cdots \circ A^{\pi}(\omega) x,
\qquad n \in \NN_{0}. \] 
Before we move on, let us recall the definition of invariant measures for
random dynamical systems, cf.  \cite[Section 1.4]{Arnold1998RandomDynamicalSystems}.
\begin{definition}\label{def:invariant-measure}
In the same setting as above, we say that a measure \(\mu\) on \(\Omega \times
E\) is invariant for \(\varphi^{\pi}\) if:
\begin{enumerate}[(i)]
\item The marginal \(\mu_{\Omega}\) of \(\mu\) on \(\Omega\) satisfies
\[ \mu_{\Omega} = \PP.\] 
\item The measure \(\mu\) is \(\Theta_{n}-\)invariant, where
\(\Theta_{n}\) is the skew-product
\[ \Theta_{n}(\omega, x) =(\vt^{n} \omega, \varphi_{n}^{\pi}(\omega)x). \] 
\end{enumerate}
\end{definition}
\begin{remark}\label{rem:on-invariant-measures}
In most cases an invariant measure \(\mu\) for a random dynamical system
\(\varphi\) admits a factorization of the form
\[ \mu(A \times B) = \int_{A \times B} \mu_{\omega}(\ud x) \PP( \ud \omega),\]
where \(A \subseteq \Omega\) and \(B \subseteq X\) are measurable sets, and
\(\omega \mapsto \mu_{\omega}(C)\) is a measurable function for every
measurable \(C \subseteq X\). We then identify the measure \(\mu\) with its
factor \(\mu_{\omega}\). In the setting of this article we will only deal with
random Dirac measures, of the form
\[ \mu_{\omega}(\ud x) = \delta_{x_{0}(\omega)},\] 
for a measurable map \(x_{0} \colon \Omega \to X\).
\end{remark}

\begin{assumption}\label{assu:random_compactness}
  Assume we are given \(X, K, U , E\) as in the previous
  section and that \( (E, d_{H})\) is a \emph{complete} metric
  space. Assume in addition that there exists an \emph{ergodic} IDS \(( \Omega,
  \mF, \PP, \vt)\).
  Let \(\varphi_{n}\) be a RDS defined via a random \emph{positive}
  operator \(A\) as above, such that:
  \[ \PP \Big( A \in \mL_{cp} (X) \Big) > 0.  \] 
\end{assumption}

In this setting the following is a random version of the Krein-Rutman theorem.
\begin{theorem}\label{thm:random_krein_rutman}

  Under Assumption \ref{assu:random_compactness}
  there exists a \(\vt-\)invariant set \( \widetilde{\Omega} \subseteq \Omega\)
  of full \(\PP-\)measure and a random variable \(u  \colon
  \Omega \to E\) such that:

  \begin{enumerate}[(i)]
    \item For all \(\omega \in \widetilde{\Omega}\) and \(f,g \in E\):
      \[ 
	\limsup_{n \to \infty} \bigg[ \frac{1}{n} \sup_{f, g \in E} \Big( \log{ d_{H} (
	    \varphi_{n}^{\pi} ( \omega) f , \varphi_{n}^{\pi} ( \omega) g)}
	\Big) \bigg] \leq \EE \log\big( \tau(A) \big) < 0. 
      \] 

    \item \(u\) is measurable w.r.t.\ to the \(\sigma-\)field 
      \( \mF^{{-}} = \sigma( (A(\vt^{{-} n} \cdot ) )_{n \in \NN})\) and:
      \[ \varphi_{n}^{\pi}( \omega) u(\omega) = u ( \vt^{n} \omega). \] 

    \item For all \(\omega \in \widetilde{\Omega}\):
      \[ \limsup_{n \to \infty} \bigg[ \frac{1}{n} \sup_{f \in E} \Big( \log
	    d_{H}( \varphi_{n}^{\pi} (
	\vt^{{-} n} \omega)  f, u( \omega) ) \Big) \bigg] \leq \EE \log \big( \tau(A)
      \big) < 0 \]
      as well as:
      \[  \limsup_{n \to \infty} \bigg[\frac{1}{n} \sup_{f \in E} \Big( \log
	    d_{H}( \varphi_{n}^{\pi} (
      \omega) f, u( \vt^{n} \omega) ) \Big)\bigg]\leq \EE \log \big(\tau(A) \big) < 0. \]

    \item The measure \(\delta_{u(\omega)}\) is the unique
      invariant measure for the RDS \( \varphi^{\pi}\) on \(E\). 

  \end{enumerate}

\end{theorem}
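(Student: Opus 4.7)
The plan is to lift the finite-dimensional arguments of \cite{ArnoldGundlachDemetrius1994, Hennion1997} to the present setting, exploiting the completeness of $(E, d_H)$ guaranteed by Assumption~\ref{assu:random_compactness}. The cornerstone is the submultiplicativity
\[
\tau(A B) \leq \tau(A)\,\tau(B),
\]
an immediate consequence of the interpretation \eqref{eqn:birkhoff_contraction} of $\tau$ as the Lipschitz constant of $A^{\pi}$ on $(E, d_H)$. Iterating gives $\tau(\varphi_n(\omega)) \leq \prod_{k=0}^{n{-}1} \tau(A(\vt^k\omega))$. Since $\tau(A) \leq 1$ always (Theorem~\ref{thm:Birkhoff_positive_operators}) and $\log \tau(A) < 0$ on the event $\{A \in \mL_{\mathrm{cp}}(X)\}$ of positive $\PP$-probability, the random variable $\log \tau(A)$ is nonpositive with $\EE \log \tau(A) \in [{-}\infty, 0)$. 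Birkhoff's ergodic theorem (truncating $\log \tau(A)$ from below if the mean is ${-}\infty$) yields
\[
\frac{1}{n} \log \tau(\varphi_n(\omega)) \leq \frac{1}{n}\sum_{k=0}^{n{-}1} \log \tau(A(\vt^k\omega)) \to \EE \log \tau(A) < 0
\]
on a $\vt$-invariant set $\widetilde{\Omega}$ of full $\PP$-measure. Assertion~(1) then follows once one observes that $\Delta(\varphi_n^{\pi}(\omega) E)$ is finite as soon as some $A(\vt^k\omega)$ lies in $\mL_{\mathrm{cp}}(X)$, which happens $\PP$-a.s.\ infinitely often by ergodicity.

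For (2), I would construct $u$ via the pull-back
\[
u(\omega) := \lim_{n \to \infty} \varphi_n^{\pi}(\vt^{{-}n}\omega) f
\]
for an arbitrary reference $f \in E$. Using the cocycle $\varphi_{n{+}1}(\vt^{{-}(n{+}1)}\omega) = \varphi_n(\vt^{{-}n}\omega) \circ A(\vt^{{-}(n{+}1)}\omega)$ together with~(1) applied at $\vt^{{-}n}\omega$ to the two points $f$ and $A^{\pi}(\vt^{{-}(n{+}1)}\omega)f$, one verifies that $d_H(\varphi_{n{+}1}^{\pi}(\vt^{{-}(n{+}1)}\omega)f,\, \varphi_n^{\pi}(\vt^{{-}n}\omega)f)$ decays geometrically at the rate produced above; hence $(\varphi_n^{\pi}(\vt^{{-}n}\omega)f)_n$ is Cauchy in the \emph{complete} metric space $(E, d_H)$. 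The limit is independent of $f$ by the uniform contraction in~(1), and $\mF^{{-}}$-measurability is transparent since each approximant depends only on $A(\vt^{{-}k}\cdot)$ for $1 \leq k \leq n$. The cocycle identity $\varphi_n^{\pi}(\omega) \circ \varphi_k^{\pi}(\vt^{{-}k}\omega) = \varphi_{n{+}k}^{\pi}(\vt^{{-}k}\omega)$, continuity of $\varphi_n^{\pi}(\omega)$ on $(E, d_H)$, and passing to $k \to \infty$ then yield the invariance $\varphi_n^{\pi}(\omega) u(\omega) = u(\vt^n \omega)$.

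Assertion~(3) combines the definition of $u$ (pull-back limit) with the forward estimate
\[
d_H(\varphi_n^{\pi}(\omega) f,\, u(\vt^n \omega)) = d_H(\varphi_n^{\pi}(\omega) f,\, \varphi_n^{\pi}(\omega) u(\omega)) \leq \tau(\varphi_n(\omega))\, d_H(f, u(\omega)),
\]
which decays at the same exponential rate as in~(1). For (4), any invariant measure of the RDS $\varphi^{\pi}$ disintegrates as $\mu(\ud \omega, \ud x) = \PP(\ud \omega)\, \mu_\omega(\ud x)$ with $(\varphi_n^{\pi}(\omega))_{*} \mu_\omega = \mu_{\vt^n \omega}$; the contraction from~(1) forces the supports of $\mu_{\vt^n \omega}$ to shrink to a point, and~(3) identifies that point as $u(\vt^n\omega)$, so $\mu_\omega = \delta_{u(\omega)}$. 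The main obstacle is the pull-back step: in infinite dimensions the projective space lacks the compactness available in the matrix setting, so the Cauchy criterion supplied by the geometric decay of $\tau(\varphi_n)$ together with the completeness of $(E, d_H)$ is crucial. A secondary subtlety, absent in the matrix case, is the possibility $\EE \log \tau(A) = {-}\infty$, handled by a standard truncation in the ergodic theorem.
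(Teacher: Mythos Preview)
Your approach is essentially the same as the paper's: both use submultiplicativity of $\tau$, Birkhoff's ergodic theorem (with truncation when $\EE\log\tau(A) = -\infty$), and completeness of $(E, d_H)$ for the pull-back construction of $u$. The forward estimate in (3) and the disintegration argument for (4) match as well.

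There is one technical slip in your Cauchy argument for (2). The bound you implicitly use,
\[
d_H\big(\varphi_{n+1}^{\pi}(\vt^{-(n+1)}\omega)f,\, \varphi_n^{\pi}(\vt^{-n}\omega)f\big) \leq \tau\big(\varphi_n(\vt^{-n}\omega)\big)\, d_H\big(A^{\pi}(\vt^{-(n+1)}\omega)f,\, f\big),
\]
leaves the second factor uncontrolled: Assumption~\ref{assu:random_compactness} puts no bound on $d_H(A^{\pi}(\vt^{-(n+1)}\omega)f, f)$, and invoking ``(1) at $\vt^{-n}\omega$'' does not help since (1) is an asymptotic statement about the \emph{forward} cocycle at a fixed base point. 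The paper sidesteps this by working with the nested images $F_n(\omega) = \varphi_n^{\pi}(\vt^{-n}\omega)(E)$: one has $F_{n+1} \subseteq F_n$, and Theorem~\ref{thm:Birkhoff_positive_operators} gives the exact identity $\Delta(F_n) = 4\,\mathrm{arctanh}\big(\tau(\varphi_n(\vt^{-n}\omega))\big)$, which tends to $0$ by a backward application of the ergodic theorem. Completeness then forces $\bigcap_n F_n = \{u(\omega)\}$. Since both terms of your Cauchy sequence lie in $F_n$, this diameter bound immediately repairs your argument and also delivers the uniformity over $f$ claimed in (3); but you do need the $\mathrm{arctanh}$ identity (or at least $\Delta(F_n)<\infty$ for some $n$) rather than the bare Lipschitz estimate.
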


\begin{notation}

  We refer to the first property as \emph{asymptotic synchronization} and to
  the third property as \emph{one force, one solution} principle. 

\end{notation}

\begin{remark}\label{rem:continuous-time-random-krein-rutman}
Theorem~\ref{thm:random_krein_rutman} can be stated also in continuous
time. Suppose that \(\vt \colon \RR \times \Omega \to \Omega \) generates an invertible,
measure-preserving and ergodic dynamical system over \((\Omega, \mF, \PP)\) and
\[ \varphi \colon \RR_{+} \times \Omega \times X \to X\] 
defines a linear (i.e. \(\varphi_{t}(\omega) \in \mL(X), \ \forall t \geq 0,
\omega \in \Omega\)) random dynamical system (see \cite[Definition
1.1.1]{Arnold1998RandomDynamicalSystems}). Assume in addition that
\[ \varphi_{t}(\omega) \  \text{ is positive } \  \forall t \geq 0, \  \omega \in \Omega,
\qquad \PP ( \varphi_{1} \in \mL_{\mathrm{cp}}(X) ) >0. \] 
Then there exists a \(\vt-\)invariant set \(\widetilde{\Omega}\) of full
\(\PP-\)measure, such that for all \(\omega \in
\widetilde{\Omega}\):
\begin{align*}
\limsup_{t \to \infty} \bigg[ \frac{1}{t} \sup_{f, g \in E} \Big( \log{ \big( d_{H}(\varphi^{\pi}_{t}(\omega) f,
\varphi^{\pi}_{t}(\omega) g) \big)} \Big) \bigg] \leqslant \EE \log{\big( \tau
(\varphi_{1})\big) < 0}.
\end{align*}
And similarly one can adapt the properties at the points \((ii)-(iv)\) of
Theorem~\ref{thm:random_krein_rutman}. This extension follows directly from the
discrete case, observing that for \(n = \lfloor t \rfloor\), since
\(\tau ( \cdot) \leqslant 1\):
\begin{align*}
\log{(d_{H} \big(\varphi_{t}^{\pi}(\omega) f, \varphi_{t}^{\pi}(\omega) g)
\big)} & \leq \log{\big( \tau (\varphi_{t - n}(\vt^{-n}\omega))
d_{H}(\varphi^{\pi}_{n}(\omega) f, \varphi_{n}^{\pi}(\omega)g) \big)}\\
& \leq \log{\big( d_{H}(\varphi^{\pi}_{n}(\omega) f,
\varphi_{n}^{\pi}(\omega)g) \big)}.
\end{align*}
Then one can apply Theorem~\ref{thm:random_krein_rutman} since any
discrete random dynamical system has the
form~\eqref{eqn:discrete-dynamical-system-structure}, with \(A(\omega) =
\varphi_{1}(\omega)\).
\end{remark}
The proof of Theorem~\ref{thm:random_krein_rutman} will rely on the following
lemma.

\begin{lemma}\label{lem:1f1s}

  There exists a \(\vt-\)invariant set \(\widetilde{\Omega} \subseteq
  \Omega\) of full \(\PP-\)measure and an \(\mF^{{-}}-\)adapted random variable
  \(u: \Omega \to E\) such that:
  \[ \varphi_{n}^{\pi} ( \omega) u(\omega) = u (\vt^{n} \omega), \qquad \forall
  \omega \in \widetilde{\Omega}, n \in \NN. \]
  Moreover for all \(\omega \in \widetilde{\Omega}\):
  \[ 
    \limsup_{n \to \infty} \bigg[\frac{1}{n} \sup_{f \in E}\Big( \log  d_{H}(
	  \varphi_{n}^{\pi}( \vt^{{-} n}
    \omega) f, u (\omega)) \Big) \bigg]\leq \EE \log{\big( \tau(A) \big)}
    <0.
  \] 
\end{lemma}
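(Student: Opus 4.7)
The strategy is the classical pullback argument for contractive random dynamical systems: iterating backward in time from an arbitrary reference point produces a Cauchy sequence in $(E, d_{H})$ whose limit is $u(\omega)$. Fix any $f_{0} \in E$ and set $u_{n}(\omega) := \varphi_{n}^{\pi}(\vt^{-n}\omega) f_{0}$ for $n \in \NN$; these are $\mF^{-}$-measurable by construction. The cocycle identity rewrites
\[ u_{n+1}(\omega) = \varphi_{n}^{\pi}(\vt^{-n}\omega)\bigl(A^{\pi}(\vt^{-(n+1)}\omega) f_{0}\bigr), \]
so both $u_{n}(\omega)$ and $u_{n+1}(\omega)$ lie in the image $\varphi_{n}^{\pi}(\vt^{-n}\omega)(E)$, and consequently
\[ d_{H}(u_{n+1}(\omega), u_{n}(\omega)) \leq \Delta\bigl(\varphi_{n}^{\pi}(\vt^{-n}\omega)(E)\bigr). \]

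Theorem \ref{thm:Birkhoff_positive_operators} combined with the sub-multiplicativity of $\tau$ inherited from \eqref{eqn:birkhoff_contraction} controls this diameter by $\prod_{i=1}^{n} \tau(A(\vt^{-i}\omega))$. Birkhoff's ergodic theorem applied to the stationary sequence $(\log \tau(A\circ \vt^{-i}))_{i \geq 1}$ yields, on a $\vt$-invariant full-measure set $\widetilde{\Omega}$,
\[ \frac{1}{n}\sum_{i=1}^{n} \log \tau(A(\vt^{-i}\omega)) \longrightarrow \EE \log \tau(A) \qquad \text{as } n \to \infty, \]
and the assumption $\PP(A \in \mL_{\mathrm{cp}}(X)) > 0$ together with $\tau(A) \leq 1$ a.s.\ forces $\EE \log \tau(A) < 0$, using the truncation $\log \tau \vee (-M)$ of the proof of Theorem \ref{thm:random_krein_rutman} should $\EE \log \tau(A) = -\infty$. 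Hence $(u_{n}(\omega))_{n}$ is $d_{H}$-Cauchy with exponential rate, and by completeness of $(E, d_{H})$ converges to some $u(\omega) \in E$, which inherits $\mF^{-}$-measurability. Independence of the choice of $f_{0}$ is immediate from the estimate $d_{H}(\varphi_{n}^{\pi}(\vt^{-n}\omega) f_{0}, \varphi_{n}^{\pi}(\vt^{-n}\omega) g_{0}) \leq \Delta(\varphi_{n}^{\pi}(\vt^{-n}\omega)(E)) \to 0$.

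The invariance $\varphi_{n}^{\pi}(\omega) u(\omega) = u(\vt^{n} \omega)$ follows from the cocycle identity $\varphi_{n+k}^{\pi}(\vt^{-k}\omega) f_{0} = \varphi_{n}^{\pi}(\omega)\bigl(\varphi_{k}^{\pi}(\vt^{-k}\omega) f_{0}\bigr)$ by passing to the limit $k \to \infty$, exploiting the $d_{H}$-non-expansiveness of the positive map $\varphi_{n}^{\pi}(\omega)$. The uniform rate is obtained via the triangle estimate
\[ \sup_{f \in E} d_{H}\bigl(\varphi_{n}^{\pi}(\vt^{-n}\omega) f, u(\omega)\bigr) \leq \Delta\bigl(\varphi_{n}^{\pi}(\vt^{-n}\omega)(E)\bigr) + \sum_{k \geq n} \Delta\bigl(\varphi_{k}^{\pi}(\vt^{-k}\omega)(E)\bigr), \]
each summand of which decays at rate $\EE \log \tau(A)$. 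The only delicate point is that everything has to be carried out along the backward orbit $\vt^{-n}\omega$, which is why invertibility of the IDS is essential; the potentially infinite negative expectation of $\log \tau(A)$ is handled by the truncation above, and the various almost-sure statements are synchronized on a common $\vt$-invariant set by countable intersection.
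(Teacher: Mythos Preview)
Your proof is correct and follows essentially the same approach as the paper: both use Birkhoff's identity relating $\Delta$ and $\tau$, sub-multiplicativity of $\tau$, and the ergodic theorem along the backward orbit to show that the pullback images shrink exponentially, then invoke completeness of $(E,d_H)$. The only presentational difference is that the paper works directly with the nested decreasing sets $F_n(\omega)=\varphi_n^\pi(\vt^{-n}\omega)(E)$ and identifies $u(\omega)$ as the unique point in $\bigcap_n F_n(\omega)$, whereas you track a Cauchy sequence from a fixed reference point; one small imprecision in your sketch is that the diameter equals $4\,\mathrm{arctanh}$ of $\tau(\varphi_n(\vt^{-n}\omega))$ rather than the product $\prod_i \tau(A(\vt^{-i}\omega))$ itself, a point the paper handles via a Taylor expansion near zero, and your final tail-sum bound, while correct, is unnecessary since $u(\omega)$ lies in the closure of every $F_n$ so $\sup_f d_H(\varphi_n^\pi(\vt^{-n}\omega)f,u(\omega))\le \Delta(F_n)$ directly.
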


\begin{proof}

  We start by observing (as in \cite[Proof of Lemma 3.3]{Hennion1997}) that the
  sequence of sets \(F_{n}(\omega) = \varphi_{n}^{\pi}( \vt^{{-} n} \omega )(
    E)\) is decreasing, i.e.  \(F_{n {+} 1 } \subseteq F_{n}\). Let us write
  \(F(\omega) = \bigcap_{n \geq 1} F_{n}(\omega)\). Hence by
  Theorem~\ref{thm:Birkhoff_positive_operators}:
  \[ 
    \Delta (F) \leq  \lim_{ n \to \infty} \Delta (F_{n}) = \lim_{n \to \infty} 4
    \ \mathrm{arctanh} \ ( \tau ( \varphi_{n} (\vt^{{-} n} \omega))). 
  \] 
  Now, by the ergodic theorem and Assumption~\ref{assu:random_compactness} there exists a \(\vt-\)invariant set \( \widetilde{\Omega}\)
  of full \(\PP-\)measure such that for all \(\omega \in \widetilde{\Omega}\):
  \begin{equation}\label{eqn:proof_ofos_support} 
    \limsup_{n \to \infty} \frac{1}{n} \log \Big( \tau ( \varphi_{n}( \vt^{{-} n}
    \omega)) \Big) \leq \lim_{n \to \infty} \frac{1}{n} \sum_{i = 0}^{n} \log{ \tau(A
    (\vt^{{-} i} \omega))} = \EE  \log{\big( \tau(A)\big) }  <0.   
  \end{equation} 
  In particular \(\lim_{n \to \infty} \tau \Big( \varphi_{n}(\theta^{-n}
\omega) \Big) = 0 \), and since \(\mathrm{arctanh}(0) = 0\) we have that \(\Delta(F) = 0\). By completeness of \(E\) it follows that
  \(F\) is a singleton. Let us write \(F(\omega) = \{u(\omega)\}\) and extend
  \(u\) trivially outside of \( \widetilde{\Omega}\): it is clear that
  \(u\) is adapted to \(\mF^{{-}}\). Since for \(k \in \NN\) and \(n \geq k\)
  \[ 
    \varphi_{n} (\vt^{{-} n} \vt^{k} \omega ) = \varphi_{k}(\omega) \circ \varphi_{n {-} k}( \vt^{{-} (n {-} k)}\omega), 
  \] 
  passing to the limit with \(n \to \infty\) we have: \(u( \vt^{k} \omega) =
    \varphi_{k}^{\pi} (\omega) u( \omega) \).

  Finally, as in the former result, a Taylor expansion guarantees that there
  exists a constant \(c(\omega)>0\) such that:
  \[ 
    \Delta ( \varphi_{n}^{\pi} ( \vt^{{-} n} \omega) (E)) =  4 \
    \mathrm{arctanh} \ ( \tau ( \varphi_{n} (\vt^{{-} n} \omega))) \leq  4 (1 +
    c(\omega))\tau ( \varphi_{n} ( \vt^{{-} n} \omega)).  
  \] 
  This estimate, combined with the fact that
  \[ 
    \sup_{f \in E} d_{H} ( \varphi_{n}^{\pi} ( \vt^{{-} n} \omega) f , u(
    \omega)) = \sup_{f \in E} d_{H} (\varphi_{n}^{\pi} ( \vt^{ {-} n}
\omega) f,
    \varphi_{n}^{\pi} ( \vt^{{-} n} \omega) u( \vt^{{-} n}\omega)) \leq \Delta (
    \varphi_{n}^{\pi} ( \vt^{{-} n} \omega) (E)) 
  \] 
  and \eqref{eqn:proof_ofos_support} provides the required convergence result.

\end{proof}

\begin{proof}[Proof of Theorem~\ref{thm:random_krein_rutman}]

  As for the first property, compute:
  \begin{align*}
    d_{H}( \varphi_{n}^{\pi} (\omega) f,
    \varphi_{n}^{\pi}(\omega) g) \leq & \tau(A_{n} (\omega))
    d_{H}( \varphi_{n {-} 1}^{\pi}( \omega ) f ,
    \varphi_{n {-} 1}^{\pi} (\omega) g) \\
    \leq & \prod_{i = 0}^{n} \tau(A( \vt^{i} \omega)) d_{H}(f, g).
  \end{align*}
  Then, applying the logarithm and Birkhoff's ergodic theorem we find:
  \[ \limsup_{n \to \infty} \frac{1}{n} \log{ (\tau (\varphi_{n}(\omega)) )}
    \leq \EE \log{( \tau(A))} < 0.  \] 
  If \(\EE \log{ \big( \tau(A)\big)} = {-} \infty\) we can instead follow the
  previous computation with \(\tau(A ( \vt^{i} \omega))\) replaced by
  \(\tau(A(\vt^{i} \omega)) \vee e^{{-} M}\) and eventually pass to the limit
  \(M \to \infty\).
  To obtain the result uniformly over \(f,g\) first observe that via
  Theorem~\ref{thm:Birkhoff_positive_operators}:
  \[ 
    \sup_{f, g \in E} \Big( \log{ d_{H} (
      \varphi_{n}^{\pi} ( \omega) f , \varphi_{n}^{\pi} ( \omega) g)}
    \Big) = \log \Big( \Delta ( \varphi_{n}^{\pi}(\omega) (E) )\Big) = \log \Big(
    4 \ \mathrm{arctanh} (\tau( \varphi_{n} (\omega)))\Big),
  \]
  and by a Taylor approximation, since \(\lim_{n \to \infty}
  \tau(\varphi_{n}(\omega)) = 0\), there exists a constant \(c(\omega)>0\) such
  that
  \begin{align*}
    \limsup_{n \to \infty} \frac{1}{n} \log \Big( 4 \ \mathrm{arctanh} (\tau(
    \varphi_{n} (\omega))) \Big) & \leq \limsup_{n \to \infty} \frac{1}{n} \log
    \Big( (1 + c(\omega)) \tau( \varphi_{n} (\omega)) \Big)  \\
    & = \limsup_{n \to \infty} \frac{1}{n} \log
    \Big( \tau( \varphi_{n} (\omega)) \Big)  \leq  \EE \log{ \tau(A)}. 
  \end{align*}

Point \((ii)\) as well as the first property of \((iii)\) follow from
  Lemma \ref{lem:1f1s}.
As for the second property of \((iii)\) we observe that
\begin{align*}
\sup_{f \in E} \Big( \log d_{H}( \varphi_{n}^{\pi} ( \omega) f, u( \vt^{n}
\omega) ) \Big) & =\sup_{f \in E} \Big( \log d_{H}( \varphi_{n}^{\pi} ( \omega)
f, \varphi_{n}^{\pi} (\omega) u(\omega) ) \Big)\\ 
& \leq \sup_{f,g \in E} \Big( \log d_{H}( \varphi_{n}^{\pi} ( \omega)
f, \varphi_{n}^{\pi} (\omega)g ) \Big),
\end{align*}
so that the estimate is now a consequence of point \((i)\). As for
\((iv)\), we have that for any two measurable \(A \subseteq \Omega, B \subseteq
E\):
\begin{align*}
\int_{\Omega \times E} 1_{A}(\vt^{n} \omega) 1_{B}(\varphi_{n}^{\pi}(\omega) f)
\delta_{u(\omega)}(\ud f) \PP(\ud \omega) & = \int_{\Omega \times E}
1_{A}(\vt^{n} \omega) 1_{B}(u(\vt^{n} \omega)) \PP(\ud \omega) \\
& = \int_{\Omega \times E} 1_{A}(\omega) 1_{B}(u(\omega)) \PP (\ud \omega),
\end{align*}
which implies that \(\delta_{u(\omega)}\) is invariant (see
Definition~\ref{def:invariant-measure}). Finally, to see that
\(\delta_{u(\omega)}\) is the unique invariant measure, let \(\mu\) be any
invariant measure. Then
\begin{align*}
 \int_{\Omega \times E} \min \{1, d_{H}(f, u( \omega)) \} \mu( \ud \omega, \ud
f) & = \lim_{n \to \infty} \int_{\Omega \times E}\min \{ 1, d_{H}(
\varphi_{n}^{\pi}(\omega) f, u(\vt^{n} \omega))\} \mu(\ud \omega, \ud
f) \\
& \leq \lim_{n \to \infty}\int_{\Omega} \sup_{f \in E}
\min \{1, d_{H}(\varphi^{\pi}_{n}(\omega) f , u(\vt^{n} \omega)) \}\PP(\ud
\omega)\\
& \leq 0,
\end{align*}
where in the last line we used dominated convergence and the results of point
\((iii)\). In particular, we have found that
\[ \mu ( \{ (\omega, f) \in \Omega \times E  \ \colon \ f \neq u(\omega)\}) =0, \] 
implying that \(\mu(\ud \omega, \ud f) = \delta_{u (\omega)} (\ud f)
\PP(\ud \omega)\).
 Note that the invariant sets in all points can be
  chosen equal to the same \( \widetilde{\Omega}\) up to taking
  intersections of invariant sets, which are still invariant.

\end{proof}

\section{Synchronization for linear SPDEs}\label{sec:syncrhonization-spdes}

In this section we discuss how to apply the previous results to stochastic PDEs.
Concrete examples will be covered in the next section. For clarity, nonetheless,
the reader should keep in mind that we want to study ergodic properties of
solutions to Equation \eqref{eqn:intro_kpz}. Since the associated heat equation
with multiplicative noise \eqref{eqn:intro_she} is linear and the solution map is
expected to be strictly positive (because the defining differential operator is
parabolic), we may assume that the solution map generates
a continuous, linear, strictly positive random dynamical system \(\varphi\).

\begin{definition}\label{def:discrete-continuous-dyn-system}
   
  A \emph{continuous} RDS over a \emph{discrete} IDS \((\Omega , \mF,
  \PP, \vt)\) and on a measure space \( (X, \mB)\) is a map 
  \[\varphi  \colon \RR_{+} \times \Omega \times X \to X\]
  such that the following two properties hold:
  \begin{enumerate}[(i)]
    \item \emph{Measurability:} \(\varphi\) is \( \mB( \RR_{+}) \otimes \mF
      \otimes \mB\)-measurable.

    \item \emph{Cocycle property:} \(\varphi(0, \omega)= \mathrm{Id}_{X},\) for
      all \(\omega \in \Omega\) and:
      \[ \varphi(t {+} n , \omega)= \varphi( t, \vt^{n} \omega) \circ
      \varphi(n, \omega), \qquad \forall t \in \RR_{+}, n \in \NN_{0}, \omega
    \in \Omega.\] 

  \end{enumerate}

\end{definition}

We then formulate the following assumptions, under which our main result will
hold.

\begin{assumption}\label{assu:properties_moments_of_solution_map}

  Let \(d \in \NN\) and \(\beta> 0\). Let \( (\Omega_{\mathrm{kpz}}, \mF, \PP,
  \vt)\) be a discrete ergodic IDS, over which is defined a continuous RDS
  \(\varphi\):
  \[
    \varphi : \RR_{+} \times \Omega_{\mathrm{kpz}} \to \mL(C(\TT^{d})).
  \]
  There exists a \(\vt-\)invariant set \( \widetilde{\Omega} \subseteq
  \Omega_{\mathrm{kpz}}\) of full \(\PP-\)measure such that the following properties
  are satisfied for all \(\omega \in \widetilde{\Omega}\) and any \(T> S >
  0\):

  \begin{enumerate}[(i)]
    \item There exists a kernel \(K: \Omega_{\mathrm{kpz}} \to
	C_{\mathrm{loc}}((0, \infty); C(\TT^{d}
      \times \TT^{d}))\) such that for all \(S \leq t \leq T\):
      \[ 
	\varphi_{t}(\omega) f (x)   = \int_{\TT^{d}} K( \omega, t, x,y) f(y) \ud
	y, \qquad \forall f \in C(\TT^{d}), x \in \TT^{d}. 
      \] 

    \item There exist \(0 < \gamma (\omega, S, T) \leq \delta (\omega, S,
      T)\) such that: 
      \[ \gamma (\omega, S, T) \leq  K(\omega,t, x,y) \leq \delta (\omega,
      S, T ), \qquad \forall x,y \in \TT^{d}, \ S \leq t \leq T, \] 
      which implies that \(\PP \big( \varphi_{t} \in \mL_{\mathrm{cp}} (C
      (\TT^{d})), \forall t \in (0, \infty) \big) = 1.\)

    \item There exists a constant \(C(\beta, \omega, S, T)\) such that:
      \[ \| \varphi_{t} f \|_{\beta} \leq C(\beta, \omega, S, T) \| f
      \|_{\infty}, \qquad \forall f \in C(\TT^{d}), \ S \leq t \leq T. \] 

    \item Consider \( (E, d_{H})\) as in
      Lemma~\ref{lem:completeness_and_properties_positive_fcts}. The following moment estimates are satisfied for any \(f \in E\): 
      \[ 
	\EE \log {\big( C ( \beta, S, T) \big)} + \EE \sup_{ S \leq t \leq T}
	d_{H}( \varphi_{t}^{\pi}f, f) < {+} \infty, 
      \]
      where \(\varphi_{t}^{\pi}\) is defined to be the identity outside of
      \( \widetilde{\Omega}\).
  \end{enumerate}

\end{assumption}

The first two assumptions allow us to use the results from the
previous section. The last two will guarantee convergence in appropriate
H\"older spaces. In view of the motivating example and in the setting of the previous
assumption, we say that for \(z \in \ZZ\) and \(h_{0} \in C(\TT^{d})\) the map
\[ 
  [z, {+} \infty) \times \TT^{d} \ni (t, x) \mapsto
  h^{z}(\omega , t, x), \qquad h^{z}(\omega, z, x) = h_0(x) 
\] 
solves Equation \eqref{eqn:intro_kpz} if \(h^{z}(\omega, t) = \log{ \big(
\varphi_{t}( \vt^{z} \omega) \exp(h_0) \big)}\) for \(\varphi_{t}\) as in the
previous assumption.

\begin{theorem}\label{thm:synchronization_for_kpz}

  Under Assumption \ref{assu:properties_moments_of_solution_map},
  for \(i = 1,2\), \(h_{0}^{i} \in C(\TT^{d})\) and \(n \in \NN\),
  let \(h_i(t) \in C (\TT^{d})\) be the random solution to Equation
  \eqref{eqn:intro_kpz}
  started at time $0$ with initial data \(h_0^{i}\) and evaluated at time $t \geq 0$.
  Similarly, let \(h^{{-} n}_{i}(t) \in C (\TT^{d})\) be the solution started in
  \({-} n\) with initial data \(h_{0}^{i}\) and evaluated at time \(t \geq {-} n\).
  There exists an invariant set \( \widetilde{\Omega} \subseteq
  \Omega_{\mathrm{kpz}}\) of full \(\PP-\)measure such that for any
  \(0 < \alpha< \beta\), for any \(T>0\) and any
  \(\omega \in \widetilde{\Omega}\):

  \begin{enumerate}[(i)]
    \item There exists a map
      \(c(h^{1}_{0}, h^{2}_{0}) \colon \Omega_{\mathrm{kpz}} \times \RR_{+}
      \to \RR\) such that: 
      \[ 
	\limsup_{n \to \infty} \bigg[ \frac{1}{n} \log \Big( \sup_{t \in [n, n {+} T]} \|
    	  h_{1} ( \omega, t) - h_{2} (\omega, t) {-} c(\omega, t,
      h^{1}_{0}, h^{2}_{0}) \|_{C^{\alpha}(\TT^{d})} \Big) \bigg] \leq
\Big( 1 - \frac{\alpha}{ \beta} \Big) \EE \log
	\big( \tau (\varphi_{1})\big) < 0, 
      \] 
      as well as:
      \[ 
	\limsup_{n \to \infty}\bigg[ \frac{1}{n} \log{ \Big( \sup_{t \in [n , n {+} T]}
	[h_{i}( \omega, t)]_{\beta}} \Big)\bigg]\leq 0. 
      \] 
      And uniformly over \(h_{0}^{i}\):
      \[ 
	\limsup_{n \to \infty} \bigg[ \frac{1}{n} \log \Big( 
	    \sup_{\substack{ h_{0}^{i} \in C(\TT^{d}), \\ t \in [n, n {+} T]}} \|
      	    h_{1} ( \omega, t) - h_{2} (\omega, t) {-} c(\omega, t,
        h^{1}_{0}, h^{2}_{0}) \|_{\infty} \Big) \bigg] \leq \EE \log
	\big( \tau (\varphi_{1})\big) < 0, 
      \] 
    \item There exists a random function \(h_{\infty}  \colon
      \Omega_{\mathrm{kpz}} \to C_{\mathrm{loc}}(({-} \infty, \infty);
      C^{\alpha}(\TT^{d}))\) and a
      sequence of maps \(c^{{-} n}(h^{1}_{0}) \colon \Omega_{\mathrm{kpz}} \times
      \RR_{+} \to \RR\) for which:
      \[ 
	\limsup_{n \to \infty} \bigg[ \frac{1}{n} \log \Big( \!\! \! \! \! \!
	    \sup_{\substack{ h_{0}^{1} \in C(\TT^{d}), \\ t \in [({-} T)\vee
	    ({-} n), T]} } \! \! \! \! \! \! \| h^{{-} n}_{1} (\omega, t) -
	h_{\infty} (\omega, t) - c^{{-} n}(\omega, t, h^{1}_{0})
    \|_{C^{\alpha}(\TT^{d})} \Big)\bigg] \leq \Big( 1 -
\frac{\alpha}{\beta} \Big)\EE \log
    \big(\tau(\varphi_{1})\big) < 0. 
      \]

  \end{enumerate}

\end{theorem}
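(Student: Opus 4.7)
The plan is to reduce everything to the projective dynamics of $\varphi_t^\pi$ and apply Theorem \ref{thm:random_krein_rutman}, upgrading $d_H$-convergence to $C^\alpha$-convergence by interpolation with the $C^\beta$ regularization bounds supplied by Assumption \ref{assu:properties_moments_of_solution_map}. Write $u_i(\omega, t) = \varphi_t(\omega) e^{h_0^i}$, set $c_i(\omega, t, h_0^i) = \log \int_{\TT^d} u_i(\omega, t, x) \ud x$ and $\tilde u_i(\omega, t) = \varphi_t^\pi(\omega) e^{h_0^i} \in E$, so that $h_i(\omega, t) - c_i(\omega, t, h_0^i) = \log \tilde u_i(\omega, t)$, and define $c := c_1 - c_2$. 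Then $h_1 - h_2 - c = \log(\tilde u_1 / \tilde u_2)$, whose $L^\infty$ norm is dominated by $d_H(\tilde u_1, \tilde u_2)$ via \eqref{eqn:log_hilbert_distance_equivalence}. Since $[\cdot]_\alpha$ is invariant under adding constants in $x$, one also has $[h_i]_\alpha = [\log \tilde u_i]_\alpha$.

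Apply Theorem \ref{thm:random_krein_rutman} to the random operator $A := \varphi_1$, which lies a.s.\ in $\mL_{\mathrm{cp}}$ by Assumption \ref{assu:properties_moments_of_solution_map}(2). Combined with the cocycle identity $\varphi_t = \varphi_{t-n}(\vt^n \omega) \circ \varphi_n$ and the universal bound $\tau(\cdot) \leq 1$ from Theorem \ref{thm:Birkhoff_positive_operators}, this controls $\sup_{t \in [n, n+T]} d_H(\tilde u_1(t), \tilde u_2(t))$ by $\sup_{f,g \in E} d_H(\varphi_n^\pi f, \varphi_n^\pi g)$, which decays at rate $\EE \log \tau(\varphi_1) < 0$ uniformly in $h_0^1, h_0^2$. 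This yields the uniform $L^\infty$ claim in item (1) directly.

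To reach $C^\alpha$, I would use the interpolation $\|g\|_{C^\alpha} \lesssim \|g\|_\infty^{1-\alpha/\beta} \|g\|_{C^\beta}^{\alpha/\beta}$ and reduce matters to a subexponential bound on $[\log \tilde u_i]_\beta$. Decomposing $\varphi_t = \varphi_{t-n+1}(\vt^{n-1}\omega) \circ \varphi_{n-1}(\omega)$ and writing $\tilde w_i = \varphi_{n-1}^\pi(\omega) e^{h_0^i}$, the Krein-Rutman contraction ensures $\tilde w_i$ is $d_H$-close to $u(\omega)$ for large $n$, so $\|\tilde w_i\|_\infty$ and $\inf \tilde w_i$ are sandwiched between positive $\omega$-dependent constants uniformly in $h_0^i$. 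Assumptions \ref{assu:properties_moments_of_solution_map}(2)--(3) then yield
\[
  [\tilde u_i(t)]_\beta \ \lesssim_\omega \ \frac{C(\beta, \vt^{n-1}\omega, 1, T{+}1)}{\gamma(\vt^{n-1}\omega, 1, T{+}1)},
\]
and a chain-rule estimate transfers this to $[\log \tilde u_i]_\beta$. Assumption \ref{assu:properties_moments_of_solution_map}(4) together with Birkhoff's ergodic theorem ensures this ratio grows subexponentially in $n$, which proves the H\"older seminorm bound in item (1) and, after interpolation, the $C^\alpha$ synchronization. For the 1F1S statement of item (2), I would set $h_\infty(\omega, t) = \log \varphi_{\{t\}}(\vt^{\lfloor t \rfloor}\omega) \, u(\vt^{\lfloor t \rfloor} \omega)$, where $u$ is the random fixed point from Theorem \ref{thm:random_krein_rutman}(2), choose $c^{{-} n}$ as the corresponding log-integral difference, and invoke Theorem \ref{thm:random_krein_rutman}(3) together with the same interpolation scheme.

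The main hurdle is establishing the subexponential growth of $C(\beta, \vt^{n-1}\omega)/\gamma(\vt^{n-1}\omega)$. The log-integrability of $C(\beta)$ is immediate from Assumption \ref{assu:properties_moments_of_solution_map}(4), while that of $1/\gamma$ can be deduced by testing the moment condition $\EE \sup_t d_H(\varphi_t^\pi f, f) < \infty$ against $f \equiv 1$, since this quantity controls $\log(\delta/\gamma)$ up to a constant factor. A secondary subtle point is that the interpolation step degrades the exponential rate by a factor $(1-\alpha/\beta)$, so the natural reading of item (1) is as exponential decay at a strictly negative rate depending on $\alpha$ and $\beta$; the full rate $\EE \log \tau(\varphi_1)$ is what the method delivers at the $L^\infty$ level before interpolation.
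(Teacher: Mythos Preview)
Your overall architecture matches the paper: reduce to $d_H$ via \eqref{eqn:log_hilbert_distance_equivalence}, apply Theorem~\ref{thm:random_krein_rutman} to $A=\varphi_1$, kill the time supremum with $\tau(\cdot)\le 1$, and upgrade to $C^\alpha$ by interpolation against a subexponential $[\,\cdot\,]_\beta$ bound. Your observation about the interpolation degrading the rate to $(1-\alpha/\beta)\,\EE\log\tau(\varphi_1)$ is correct, and the paper's own computation (its Step~2) exhibits the same loss; the statement of item~(1) should be read accordingly.

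There is, however, a genuine gap in your forward $[\,\cdot\,]_\beta$ estimate. You write that $\tilde w_i=\varphi_{n-1}^\pi(\omega)e^{h_0^i}$ is $d_H$-close to $u(\omega)$, hence sandwiched by $\omega$-only constants. But the forward part of Theorem~\ref{thm:random_krein_rutman}(3) gives closeness to $u(\vt^{n-1}\omega)$, not $u(\omega)$: your bounds on $\|\tilde w_i\|_\infty$ and $\inf\tilde w_i$ therefore depend on $n$ through $\vt^{n-1}\omega$, and controlling them subexponentially would require $\EE\,d_H(u,\mathbf 1)<\infty$, which does not follow from Assumption~\ref{assu:properties_moments_of_solution_map}. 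Your fallback---deducing $\EE\log(1/\gamma)<\infty$ from $\EE\sup_t d_H(\varphi_t^\pi\mathbf 1,\mathbf 1)<\infty$---goes the wrong way: since $\varphi_t\mathbf 1(x)=\int K(t,x,y)\,dy$ is a $y$-average of the kernel, one has $d_H(\varphi_t^\pi\mathbf 1,\mathbf 1)\le \log(\delta/\gamma)$, not $\ge$, so the moment bound does not control $\log(\delta/\gamma)$.

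The paper circumvents both issues by never invoking $\gamma$, $\delta$, or $u$ in the forward step. It bounds
\[
  \log\|\varphi_{n-1}^\pi(\omega)f\|_\infty-\log m(\varphi_t^\pi(\omega)f)\ \lesssim\ \sup_{1\le\tau\le T+1} d_H\big(\varphi_{n-1+\tau}^\pi(\omega)f,\,f\big)\ +\ \text{const}(f),
\]
then telescopes
\[
  d_H\big(\varphi_{N}^\pi(\omega)f,f\big)\ \le\ \sum_{i=0}^{N-1}\Big(\prod_{j=i+1}^{N-1}\tau(\varphi_1(\vt^j\omega))\Big)\,d_H\big(\varphi_1^\pi(\vt^i\omega)f,f\big),
\]
and uses $\EE\,d_H(\varphi_1^\pi f,f)<\infty$ together with the ergodic theorem (and a truncation of the product) to show this is $o(N)$. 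Note that your ``convergence-plus-$\gamma$'' strategy \emph{is} exactly what the paper uses for the backward item~(2): there the target $u_\infty(\omega,-T-1)$ is fixed in $n$, so one gets a genuinely $\omega$-only lower bound and the constant $\gamma(\vt^{-T}\omega,1,2T{+}1)$ needs no integrability. Your error is transplanting that backward argument to the forward direction, where the target moves with $n$.
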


Passing to the gradient one can omit all constants and find the following
for Burgers' Equation.

\begin{corollary}

  In the same setting as before, it immediately follows that also:
  \begin{align*}
    \limsup_{n \to \infty} \bigg[ \frac{1}{n} \log  \Big( \sup_{t \in [n, n {+} T]} \|
        \nabla_{x} h_{1} ( \omega, t) - \nabla_{x} h_{2} (\omega, t)
    \|_{C^{\alpha {-} 1}(\TT^{d})} \Big) \bigg] & \leq  \Big( 1 -
\frac{\alpha}{\beta} \Big) \EE \log{
    \big( \tau (\varphi_{1})\big)} < 0,\\ 
    \limsup_{n \to \infty} \bigg[ \frac{1}{n}  \log \Big( \! \! \! \! \sup_{\substack{ h^{1}_{0} \in
	C(\TT^{d}), \\ t \in [({-} T)\vee ({-} n), T]}} \! \! \! \! \| \nabla_{x} h^{{-} n}_{1} (\omega, t)
      - \nabla_{x} h_{\infty} (\omega, t)\|_{C^{\alpha {-} 1}(\TT^{d})} \Big)
    \bigg] & \leq  \Big( 1 - \frac{\alpha}{\beta} \Big)  \EE \log \big( \tau(\varphi_{1})\big) < 0, 
  \end{align*}
  where the space \(C^{\alpha {-} 1} (\TT^{d})\) is understood as the
  Besov space \(B^{\alpha {-} 1}_{\infty, \infty}(\TT^{d})\) for \(\alpha \in
  (0,1)\).

\end{corollary}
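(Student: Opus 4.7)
The plan is to deduce the corollary directly from Theorem \ref{thm:synchronization_for_kpz} by applying the gradient operator, exploiting the two facts that the KPZ correction terms $c(\omega, t, h^1_0, h^2_0)$ and $c^{-n}(\omega, t, h^1_0)$ depend only on $t$ (hence are killed by $\nabla_x$), and that $\nabla_x$ is a bounded linear operator from $C^{\alpha}(\TT^d)$ into the Besov space $B^{\alpha-1}_{\infty, \infty}(\TT^d)$ with an operator norm that is deterministic and independent of $\omega$ and $t$.

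The first step is to observe that for any $f \in C^{\alpha}(\TT^d)$ with $\alpha \notin \NN$ one has the continuous embedding
\[
  \|\nabla_x f\|_{B^{\alpha-1}_{\infty, \infty}(\TT^d)} \lesssim \|f\|_{C^{\alpha}(\TT^d)},
\]
which is a standard property of Besov spaces (see e.g.\ \cite[Chapter 2]{Triebe2010}); when $\alpha > 1$ this reduces to $\|\nabla_x f\|_{C^{\alpha-1}} \lesssim \|f\|_{C^{\alpha}}$, and for $\alpha \in (0, 1)$ we use the convention stated in the corollary. Applying this to $f = h_1(\omega, t) - h_2(\omega, t) - c(\omega, t, h^1_0, h^2_0)$, and noting that $\nabla_x c(\omega, t, h^1_0, h^2_0) = 0$, we obtain
\[
  \|\nabla_x h_1(\omega, t) - \nabla_x h_2(\omega, t)\|_{B^{\alpha-1}_{\infty, \infty}} \lesssim \|h_1(\omega, t) - h_2(\omega, t) - c(\omega, t, h^1_0, h^2_0)\|_{C^{\alpha}}.
\]
Taking the supremum over $t \in [n, n+T]$, then applying the logarithm, dividing by $n$ and taking the $\limsup$ as $n \to \infty$, the implicit constant contributes zero to the exponential rate and the first inequality of the corollary follows immediately from the first point of Theorem \ref{thm:synchronization_for_kpz}.

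For the second inequality, I would simply define $h_{\infty}$ to be the same random function supplied by the second point of Theorem \ref{thm:synchronization_for_kpz}, and apply the very same argument to $h^{-n}_1(\omega, t) - h_{\infty}(\omega, t) - c^{-n}(\omega, t, h^1_0)$: the spatially constant term $c^{-n}$ disappears under $\nabla_x$, the boundedness of $\nabla_x$ transfers the decay in $C^{\alpha}$ to decay in $B^{\alpha-1}_{\infty, \infty}$, and the uniform supremum over $h_0^1 \in C(\TT^d)$ and $t \in [(-T) \vee (-n), T]$ is preserved by the linearity of the estimate.

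I do not expect any substantial obstacle here: the whole content is the observation that the spatial constant in the KPZ shift invariance is removed by taking a spatial derivative, and that derivatives are continuous between the natural function spaces. The only minor point requiring attention is the convention for $C^{\alpha - 1}$ when $\alpha \in (0,1)$, which is handled once and for all by identifying it with the Besov space $B^{\alpha-1}_{\infty, \infty}(\TT^d)$ as in the corollary's statement.
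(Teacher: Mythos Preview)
Your proposal is correct and matches the paper's approach: the corollary is stated immediately after Theorem~\ref{thm:synchronization_for_kpz} with the remark ``Passing to the gradient one can omit all constants,'' and no further proof is given. Your argument spells out exactly this observation---the spatially constant shifts vanish under $\nabla_x$, and the boundedness of $\nabla_x : C^\alpha \to B^{\alpha-1}_{\infty,\infty}$ transfers the exponential rate unchanged.
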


\begin{proof}[Proof of Theorem \ref{thm:synchronization_for_kpz}]

  Consider \(\omega \in \widetilde{\Omega}\) and \(T >0\) fixed for the entire
  proof. 
  
  \textit{Step 1.} Define:
  \[u_{0}^{i} = \exp (h_{0}^{i}) / \|\exp ( h_0^{i} ) \|_{L^{1}} \in  E, \] 
  so that \(h_{i} (\omega, t) =
    \log{ \big( \varphi_{t}^{\pi} (\omega)  u_{0}^{i} \big) } {+} c_{i}(\omega,
  t)\), where \(c_{i}(\omega, t) \in \RR\) is the normalization constant:
  \[ 
    c_{i}(\omega, t) = \log \bigg( \int_{\TT^{d}} (\varphi_{t} (\omega)
    u_{0}^{i})  (x) \ud x \bigg) + \log{ \bigg( \int_{\TT^{d}} \exp
    (h_0^{i})(x) \ud x \bigg)}. 
  \]
  Let us write \(c(\omega, t, h^{1}_{0}, h^{2}_{0}) = c_{1}(\omega, t) {-}
  c_{2}(\omega, t)\). Similarly, for \({-} n \leq t \leq 0\) one has:
  \[
    h^{{-} n}_{i} (\omega, t) = \log{ \big( \varphi_{n {+} t}^{\pi}( \vt^{{-} n} \omega)
    u_{0}^{i}\big)} {+} c_{i}^{{-} n}(\omega, t) = h_{i}( \vt^{{-}n}
    \omega, n {+} t),
  \] 
  where \(c_{i}^{{-} n}( \omega, t) = c_{i}( \vt^{{-} n} \omega, n {+} t)\).
  Also, write \(c^{{-} n}(\omega, t, h^{1}_{0}, h^{2}_{0}) = c_{1}^{{-} n}(\omega, t) {-}
  c_{2}^{{-} n}( \omega, t)\). As a first step, we prove the following simpler
- since it considers convergence in $ L^{\infty} $ instead of $ C^{\alpha} $ -  version of
  the required result:
  \begin{equation}\label{eqn:support_time_inft_norm}
    \begin{aligned}
      \limsup_{n \to \infty} \bigg[ \frac{1}{n} \log{ \Big( \sup_{\substack{
	    h_{0}^{i} \in C(\TT^{d}), \\ t \in [n, n {+} T]}} \|
          h_1(\omega, t) {-} h_{2}(\omega, t) {-} c(\omega, t, h^{1}_{0},
      h^{2}_{0}) \|_{\infty} \Big) } \bigg] &\\
      \leq \EE \log \big( & \tau (\varphi_{1})\big),\\
      \limsup_{n \to \infty}  \bigg[ \frac{1}{n} \log{ \Big( \! \! \! \! \sup_{\substack{
	      h_{0}^{1} \in C(\TT^{d}) \\ t \in [({-} T) \vee ({-}
	  n), T]}} \! \! \! \! \| h_1^{{-} n}(\omega, t) {-} h_{\infty}(\omega, t) {-} c^{{-}
    n}_{1}(\omega, t) \|_{\infty}} \Big) \bigg]&\\
    \leq \EE \log \big( & \tau( \varphi_{1})\big).
    \end{aligned}
  \end{equation}
 We observe that in view of Assumption
  \ref{assu:properties_moments_of_solution_map}, we can apply Theorem
  \ref{thm:random_krein_rutman} in the setting of
  Lemma~\ref{lem:completeness_and_properties_positive_fcts} with \(A(\omega) = \varphi_{1}(\omega)\) to see
  that there exists a \( \overline{u}_{\infty} = \exp( \overline{h}_{\infty}) \colon
  \Omega_{\mathrm{kpz}} \to C(\TT^{d})\) such that \( \varphi^{\pi}_{n}
(\omega) \overline{u}_{\infty} (\omega) = \overline{u}_{\infty}(\vt^{n} \omega)
\). In particular, we define for any $ t \in \RR $: \[ h_{\infty}(\omega, t) = \log(
\varphi_{t+ n}^{\pi}(\vt^{- n}\omega) \overline{u}_{\infty}(\vt^{- n}\omega ) )
= \log{ u_{\infty}(\omega, t)},\] for
any $ n $ such that $ t+n > 0 $ (note that the definition does not depend on
the choice of such $ n $). 
  With this definition we proceed to prove~\eqref{eqn:support_time_inft_norm}. We start by eliminating the time supremum, since in view of
  Inequality \eqref{eqn:log_hilbert_distance_equivalence}:
  \begin{align*}
    \limsup_{n \to \infty}  \bigg[ \frac{1}{n} & \log{ \Big( \sup_{\substack{
	  h_{0}^{i} \in C(\TT^{d}) \\ t \in [n, n {+} T]}} \|
        h_1(\omega, t) {-} h_{2}(\omega, t) {-} c(\omega, t, h^{1}_{0},
    h^{2}_{0}) \|_{\infty}\Big)}  \bigg] \\
    & \leq \limsup _{n \to \infty} \bigg[ \frac{1}{n} \log{ \Big(
	  \sup_{\substack{ h_{0}^{i} \in C(\TT^{d}) \\ t \in [n, n {+} T]}}
	  d_{H}( \varphi_{t}^{\pi}(\omega) u^{1}_{0}, \varphi_{t}^{\pi} (\omega) 
    u_{0}^{2})\Big) } \bigg] \\
    & \leq \limsup_{n \to \infty} \bigg[ \frac{1}{n} \log{ \Big( \sup_{h_{0}^{i} \in C
	  (\TT^{d})} d_{H} ( \varphi_{n}^{\pi}(\omega) u_0^1,
	    \varphi_{n}^{\pi} (\omega) u_0^2) \Big)} \bigg]
  \end{align*}
where we used the definition of the contraction constant \(\tau( \cdot)\)
together with the fact that \(\tau(\cdot) \leq 1\) (cf.
Theorem~\ref{thm:Birkhoff_positive_operators}) to obtain
\begin{align*}
d_{H}( \varphi_{t}^{\pi}(\omega) u_0^1, \varphi_{t}^{\pi} (\omega) u_0^2)
& = d_{H}( \varphi_{t-n}^{\pi}(\vt^{n} \omega) \varphi_{n}^{\pi}(\omega) u_0^1,
\varphi_{t-n}^{\pi}(\vt^{n} \omega) \varphi_{n}^{\pi} (\omega) u_0^2) \\
& \leq \tau( \varphi_{t-n}^{\pi}(\vt^{n} \omega))
d_{H}(\varphi_{n}^{\pi}(\omega) u_{0}^{1}, \varphi_{n}^{\pi}(\omega)
u_{0}^{2}) \\
& \leq d_{H}(\varphi_{n}^{\pi}(\omega) u_{0}^{1}, \varphi_{n}^{\pi}(\omega),
u_{0}^{2})
\end{align*}
so that one can estimate:
  \begin{align*}
    \sup_{t \in [n, n {+} T]}d_{H}( \varphi_{t}^{\pi}(\omega) u_0^1,
    \varphi_{t}^{\pi} (\omega) u_0^2) \leq  d_{H}( \varphi_{n}^{\pi}(\omega)
    u_0^1, \varphi_{n}^{\pi} (\omega) u_0^2).
  \end{align*}
Similarly, also for the backwards case: 
\begin{align*}
 \limsup_{n \to \infty} & \bigg[ \frac{1}{n} \log{ \Big( \! \! \! \!
\sup_{\substack{ h_{0}^{1} \in C(\TT^{d}) \\ t \in [({-} T) \vee ({-} n), T]}}
\! \! \! \! \| h_1^{{-} n}(\omega, t) {-} h_{\infty}(\omega, t) {-} c^{{-}
n}_{1}(\omega, t) \|_{\infty}} \Big) \bigg] \\
& \leqslant \limsup_{n \to \infty} \bigg[ \frac{1}{n} \log \bigg( \sup_{\substack{ h_{0}^{1} \in C(\TT^{d}) \\ t \in [({-} T) \vee ({-} n), T]}}
\! \! \! \!d_{H}( \varphi_{n + t}^{\pi}
(\vt^{- n} \omega) u_{0}^{1}, \varphi_{n + t}^{\pi} (\vt^{- n} \omega)
\overline{u}_{\infty}(\vt^{- n} \omega)) \bigg) \bigg]\\
& \leqslant \limsup_{n \to \infty} \bigg[ \frac{1}{n} \log \bigg( \sup_{ h_{0}^{1} \in C(\TT^{d})}
\! \! \! \!d_{H}( \varphi_{n - T}^{\pi}
(\vt^{- n} \omega) u_{0}^{1}, \varphi_{n - T}^{\pi} (\vt^{- n} \omega)
\overline{u}_{\infty}(\vt^{- n} \omega)) \bigg) \bigg].
\end{align*}
Now, again in view of Assumption \ref{assu:properties_moments_of_solution_map},
we can apply Theorem \ref{thm:random_krein_rutman} to obtain:
  \begin{align*}
    \limsup_{n \to \infty} & \bigg[ \frac{1}{n} \log {\Big( \sup_{u_{0}^{i} \in
	  E} d_{H}( \varphi_{n}^{\pi}(\omega) 
  u^{1}_{0}, \varphi_{n}^{\pi} ( \omega) u^{2}_{0} ) \Big)} \bigg] \leq \EE
    \log{ \big( \tau( \varphi_{1}) \big)}, \\
    \limsup_{n \to \infty} & \bigg[ \frac{1}{n} \log \Big( \sup_{u^{1}_{0} \in E}
	d_{H}( \varphi_{n}^{\pi} (\vt^{{-} n}\omega) u^{1}_{0},
    \overline{u}_{\infty}(\omega) ) \Big) \bigg] \leq \EE \log{ \big( \tau (
    \varphi_{1}) \big)}, 
  \end{align*}
  which via the previous calculation implies
  \eqref{eqn:support_time_inft_norm}. In particular, this also proves the bound
  uniformly over \(h_{0}^{i}\) at point \((i)\) of the theorem.

  \textit{ Step 2.} We pass to prove convergence in \(C^{\alpha}(\TT^{d})\) for
  \(0 < \alpha< \beta\). Hence consider
  \(\alpha < \beta\) fixed and define \(\theta \in (0,1)\) by \(\alpha = \beta \theta\). As
  convergence in \(C(\TT^{d})\) is already established, to prove
  convergence in \(C^{\alpha}(\TT^{d})\) one has to control the
  \(\alpha-\)seminorm \([ \cdot]_{\alpha}\) of \(h_{1}- h_{2}\). We treat the forwards and backwards
  in time cases differently, starting with the first case.
  Let us recall the bound
\begin{align*}
[f]_{\alpha} \leq C(\alpha, \beta) \| f \|_{\infty}^{1- \theta} [f]_{\beta}^{\theta},
\end{align*}
which is proven in Lemma~\ref{lem:interpolation-bound-general-regularity}.
With this bound one can estimate the H\"older seminorm via:
  \begin{equation}\label{eqn:interpolation_in_proof}
    \begin{aligned}
      & [h_{1}(\omega, t) {-} h_{2}(\omega, t) {-} c(\omega, t,
      h^{1}_{0}, h^{2}_{0})]_{\alpha} \leq C(\alpha, \beta) \| h_{1}( \omega, t) {-} h_{2}( \omega, t) {-}
	c(\omega, t, h^{1}_{0}, h^{2}_{0})
      \|_{\infty}^{1 {-} \theta} \cdot \\
      & \qquad \qquad \qquad \qquad \qquad \qquad \qquad \qquad \qquad \cdot
      \Big( [ \log{ (\varphi_{t}^{\pi} (\omega)
	u^{1}_{0} )} ]_{\beta} {+} [ \log{( \varphi_{t}^{\pi}(\omega)
      u^{2}_{0} )} ]_{\beta}\Big)^{\theta}. 
    \end{aligned}
  \end{equation}
Here we used that for the H\"older seminorm
\begin{align*}
[h_{1}(\omega, t) - h_{2}(\omega, t) - c(\omega, t, h^{2}_{0} ,
h^{2}_{0})]_{\beta} & = [\log{ (\varphi_{t}^{\pi} (\omega)
	u^{1}_{0} )}- \log{( \varphi_{t}^{\pi}(\omega) u^{2}_{0} )} ]_{\beta}\\
& \leqslant [ \log{ (\varphi_{t}^{\pi} (\omega)
	u^{1}_{0} )} ]_{\beta} {+} [ \log{( \varphi_{t}^{\pi}(\omega)
      u^{2}_{0} )} ]_{\beta},
\end{align*}
since the seminorm does not vary under translations by a constant.
  
Since we already proved that the first factor in the product vanishes exponentially fast, our aim will
be to prove that the second factor does not explode exponentially fast. This
amounts to proving the second bound at point \((i)\). To this end, fix
\(n \in \NN, T>0\) and \(t \in [n, n {+} T]\), and define \(\sigma\) by \(t = n {-} 1 {+}
\sigma\). We can use Lemma~\ref{lem:derivatives-of-the-logarithm} to bound
the last terms by: 
  \begin{align*}
    [h_{1}(\omega, t)]_{\beta} & = [ \log { ( \varphi_{t}^{\pi}( \omega)
    u^{1}_{0}) } ]_{\beta} \\
    & \leq \overline{C}(\beta) \left(  \frac{1+[ \varphi_{t}^{\pi} ( \omega) u^{1}_{0}
]_{\beta}}{m( \varphi_{t}^{\pi} ( \omega) u^{1}_{0})} \right)^{\lfloor \beta
\rfloor + 1} \\
		& \leqslant \overline{C}(\beta)  \left( \frac{1+[ \varphi_{\sigma}^{\pi}(
\vt^{n {-} 1} \omega) \circ \varphi_{n {-} 1}^{\pi}( \omega) u^1_0
]_{\beta}}{m( \varphi_{t}^{\pi} ( \omega) u^{1}_{0})}\right)^{\lfloor \beta
\rfloor +1} \\
		& \leq \overline{C}(\beta) \left( \frac{1+C( \beta, \vt^{n {-} 1}
\omega, 1, T {+} 1) \| \varphi_{n {-} 1}^{\pi}( \omega)  u^{1}_{0}
\|_{\infty}}{m( \varphi_{t}^{\pi} ( \omega) u^{1}_{0})}\right)^{\lfloor \beta
\rfloor +1}
  \end{align*}
  where \(m( \cdot)\) indicates the minimum of a function and \(
\overline{C}(\beta)\) is the deterministic constant of
Lemma~\ref{lem:derivatives-of-the-logarithm}. We can plug this
estimate into Equation~\eqref{eqn:interpolation_in_proof} to obtain for some
deterministic \( \widetilde{C}(\alpha, \beta)>0\):
  \begin{align*}
    \log [ h_{1}(\omega , t) {-} h_{2}(\omega, t) & {-} c(\omega, t,
      h^{1}_{0}, h^{2}_{0}) ]_{\alpha} \\
    \leq &  (1 {-} \theta) \log{\| h_{1}(\omega, t) {-}
    h_{2}(\omega, t) {-} c(\omega, t, h^{1}_{0}, h^{2}_{0})\|_{\infty}} \\ 
    & +  \theta (\lfloor \beta \rfloor +1) \log \Big( \sum_{i = 1,2} \frac{1+C(\beta, \vt^{n {-} 1}
	\omega, 1, T {+} 1)\|
    \varphi_{n {-} 1}^{\pi} (\omega) u^{i}_{0}
\|_{\infty}}{m(\varphi_{t}^{\pi}(\omega) u^{i}_{0})}  \Big) \\
& + \widetilde{C}(\alpha, \beta)\\
\leq & (1 {-} \theta) \log{ \| h_{1}(\omega, t) {-}
    h_{2}(\omega, t) {-} c(\omega, t, h^{1}_{0}, h^{2}_{0})\|_{\infty}} \\ 
    & +  \sum_{i=1,2} \theta ( \lfloor \beta \rfloor +1)\log \Big( 2\frac{(1+ C(\beta, \vt^{n {-} 1}
	\omega, 1, T {+} 1))\| \varphi_{n {-} 1}^{\pi} (\omega) u^{i}_{0}
\|_{\infty}}{m(\varphi_{t}^{\pi}(\omega) u^{i}_{0})}  \Big)\\
&  + \widetilde{C}(\alpha, \beta),
  \end{align*}
where in the last line we used that \(\log{ \max_{i} x_{i}} = \max_{i} \log{
x_{i}}\) and that \(\| \varphi_{n-1}^{\pi}(\omega) u_{0}^{i}  \|_{\infty}
\geqslant 1\),
since \(\varphi_{n-1}^{\pi}(\omega) u_{0}^{i} \in E\) and hence
\(\int_{\TT^{d}}\varphi_{n-1}^{\pi}(\omega) u_{0}^{i}(x) \ud x=1\).
  To conclude, in view of Equation~\eqref{eqn:support_time_inft_norm}, we have to prove
  that for \(i=1,2\):
  \begin{equation}\label{eqn:proof-synchro-holder-support-bound}
    \limsup_{n \to \infty} \bigg[ \frac{1}{n} \log{\bigg( \sup_{n \leq t \leq n {+} T } \Big(
    	 \frac{1+C(\beta, \vt^{n {-} 1} \omega, 1, T {+} 1)}{m(
	\varphi_{t}^{\pi}(\omega) u^{i}_{0})} \| \varphi_{n {-} 1}^{\pi}(\omega)
    u^{i}_{0} \|_{\infty} \Big) \bigg)} \bigg]\leq  0.  
  \end{equation}
  In particular, the latter inequality also implies the \(\beta\)-H\"older
  norm bound of \(h_{i}\) at point \((i)\) of the theorem.
  Now we observe that for \(n \in \NN, n \geq 1, \ T>0, t = n -1 + \sigma \in [n,
n + T]\) and for any \(f \in E\):
  \begin{align*}
      \log \Big( 1+ C &( \beta, \vt^{n {-} 1} \omega, 1, T {+} 1)\Big)  + \log{
\Big( \| \varphi_{n {-} 1}^{\pi} (\omega) f \|_{\infty}\Big)} {-} \log{ \Big( m
\big(\varphi_{t}^{\pi} ( \omega) f \big) \Big)} \\
& \leq \log{ \Big( 1+ C ( \beta, \vt^{n {-} 1} \omega, 1, T {+} 1)\Big)} +
2 \sup_{1 \leq \sigma \leq T +1} d_{H}( \varphi_{n-1 + \sigma}^{\pi}( \omega) f,
f).
  \end{align*}
Here we used again that \(\varphi^{\pi}_{s}(\omega)  f \) lies in \(E\) for all
\(\omega\) and \(s\), and that for \(g \in E\) we have \(m(g) \leqslant  1
\leqslant \| g \|_{\infty},\)
since it holds that \(\int_{\TT^{d}} g(x)\ud x = 1\). In fact this implies
\[ \log{\Big( m ( \varphi_{s}^{\pi}(\omega) f) \Big)} \leqslant 0 \leqslant \log{ \Big(
\| \varphi^{\pi}_{s}(\omega) f \|_{\infty} \Big),}
 \] 
so that
\begin{align*}
\log{ \Big( \| \varphi_{n {-} 1}^{\pi} (\omega) f \|_{\infty}\Big)} {-} \log{ \Big( m
\big(\varphi_{t}^{\pi} ( \omega) f \big) \Big)} & \leqslant \log{ \Big( \|
\varphi_{n {-} 1}^{\pi} (\omega) f \|_{\infty}\Big)} - \log{\Big( m
(\varphi_{n-1}^{\pi}(\omega) f) \Big)} \\
& \quad + \log{ \Big( \|
\varphi_{t}^{\pi}(\omega) f \|_{\infty} \Big)}  - \log{ \Big( m
\big(\varphi_{t}^{\pi} ( \omega) f \big) \Big)} \\
& \leqslant 2 \sup_{1 \leqslant \sigma \leqslant T + 1} d_{H}(\varphi_{n-1 +
\sigma}^{\pi}(\omega)f, f).
\end{align*}
Hence we have reduced \eqref{eqn:proof-synchro-holder-support-bound}
to proving the following:
  \begin{equation}\label{eqn:reduced_convergence_in_proof} 
    \limsup_{n \to \infty} \frac{1}{n} \left[ \log{\Big( C(\beta , \vt^{n {-} 1}
      \omega, 1 , T {+} 1)\Big)} + \sup_{1 \leq \sigma \leq T {+} 1}  d_{H} (
    \varphi_{ n {-} 1 {+} \sigma}^{\pi}( \omega) f, f) \right] \leq 0. 
  \end{equation} 
  Let us start with the last term and bound: 
\begin{equation}\label{eqn:proof-synchro-bound-complicated-term}
  \begin{aligned}
    d_{H} ( \varphi_{n {-} 1 {+} \sigma }^{\pi}& (\omega) f, f) \\
    & \leq \tau(\varphi_{\sigma}( \vt^{n {-} 1} \omega) )d_{H} ( \varphi_{n {-}
    1}^{\pi} (\omega) f, f) + d_{H}( \varphi_{\sigma}^{\pi}(\vt^{n {-} 1} \omega) f,
    f) \\ 
    & \leq \sum_{i = 0}^{n {-} 1} \prod_{j = i {+} 1}^{n {-} 1}
    \tau(\varphi_{1}( \vt^{j} \omega)) d_{H} ( \varphi_{1}^{\pi}( \vt^{i} \omega)
    f, f) + \sup_{1 \leq \sigma \leq T {+} 1} d_{H}( \varphi_{\sigma}^{\pi}
    ( \vt^{n {-} 1} \omega) f, f).
  \end{aligned}
\end{equation}
Here, in order to obtain the last inequality, we have iteratively applied
the following inequality, which holds for any $ j \in \NN $:
\begin{align*}
d_{H}(\varphi_{j+1}^{\pi}(\omega)f, f) & \leqslant
d_{H}(\varphi_{j+1}^{\pi}(\omega)f, \varphi_{1}^{\pi}(\vt^{j} \omega)f) +
d_{H}(\varphi_{1}^{\pi}(\vt^{j} \omega)f, f)\\
& \leqslant \tau(\varphi_{1}^{\pi}(\vt^{j} \omega))d_{H}(\varphi_{j}^{\pi}(\omega)f, f) +
d_{H}(\varphi_{1}^{\pi}(\vt^{j} \omega)f, f).
\end{align*}  
  By Assumption \ref{assu:properties_moments_of_solution_map}
  \( \EE [\sup_{1 \leq \sigma \leq T {+} 1} d_{H}( \varphi_{\sigma}^{\pi}
f, f)] < \infty\), hence by the ergodic theorem for all \(\omega \in
\widetilde{\Omega}\) (up to reducing \(\widetilde{\Omega}\)):
\[ \lim_{n \to \infty} \frac{1}{n} \sum_{i= 1}^{n}\sup_{1 \leqslant \sigma \leqslant T +1} d_{H}(
\varphi_{\sigma}^{\pi}(\vt^{i} \omega) f, f) = \EE \Big[ \sup_{1 \leq \sigma \leq T
{+} 1} d_{H}( \varphi_{\sigma}^{\pi} f, f) \Big] < \infty.\]
In particular, by Lemma~\ref{lem:convergence-series}, for any \(j \in \NN\):
  \begin{equation}\label{eqn:proof-synchro-convergence-to-zero-ergodic-theorem}
    \lim_{n \to \infty} \frac{1}{n} \sup_{1 \leq \sigma \leq T {+} 1}
    d_{H}( \varphi_{\sigma}^{\pi} ( \vt^{n {-} j} \omega) f , f) = 0.
  \end{equation}
Here we used that \( \widetilde{\Omega}\) is invariant under \(\vt\). So if
\(\omega \in \widetilde{\Omega}\), then also \(\vt^{-j} \omega \in
\widetilde{\Omega}\).
  Now observe that by Lebesgue dominated convergence, since \(d_{H}
(\varphi_{1}^{\pi} (\omega) f,f) \in L^{1}(\Omega)\) and since \(\tau( \cdot) \leqslant
1\) as well as \(\lim_{c \to \infty} \prod_{j =1}^{c}
\tau(\varphi_{1}(\vt^{j} \omega)) =0, \ \forall \omega \in
\widetilde{\Omega}\), it holds that:
  \[
    \lim_{ c \to \infty} \EE \bigg[ \prod_{j = 1}^{c} \tau(\varphi_{1}(
    \vt^{j} \cdot ))d_{H}( \varphi_{1}^{\pi}(\cdot) f, f)  \bigg] = 0.
  \]
  Hence fix any \(\ve>0\) and choose a deterministic \(c(\ve) \in \NN\) so that:
\begin{align*}
\EE \Big[\prod_{j = 1}^{c(\ve)} \tau(\varphi_{1}( \vt^{j} \cdot ))d_{H}( \varphi_{1}^{\pi}(\cdot) f, f) \Big] \leqslant \ve.
\end{align*}
  Now we use the bound~\eqref{eqn:proof-synchro-bound-complicated-term}
together with~\eqref{eqn:proof-synchro-convergence-to-zero-ergodic-theorem} and
the ergodic theorem to obtain:
  \begin{align*}
    \limsup_{n \to \infty} \frac{1}{n} & \sup_{1 \leq \sigma \leq T {+} 1}
    d_{H}( \varphi_{n {-} 1 {+} \sigma}^{\pi} (\omega) f, f) \\
    & \leq \limsup_{n \to \infty} \frac{1}{n} \sum_{i = 1}^{n {-} 1 {-}
c(\ve)} \prod_{j = i {+} 1}^{i {+} c(\ve)} \tau( \varphi_{1}( \vt^{j} \omega)) d_{H}(
    \varphi_{1}^{\pi}(\vt^{i} \omega) f, f) \\
& \quad + \limsup_{n \to \infty} \frac{1}{n} \sum_{i = n {-} 1 {-}
c(\ve)}^{n } \sup_{1 \leqslant \sigma \leqslant T + 1}  d_{H}(
\varphi_{\sigma}^{\pi}(\vt^{i-1} \omega) f, f)\\
& \leq \ve.
  \end{align*}
  As \(\ve\) is arbitrarily small we have proven that
\[ \limsup_{n \to \infty} \frac{1}{n}  \sup_{1 \leq \sigma \leq T {+} 1}  d_{H} (
    \varphi_{ n {-} 1 {+} \sigma}^{\pi}( \omega) f, f) \leqslant 0,\]
which is of the required order for~\eqref{eqn:reduced_convergence_in_proof}.
 To complete the proof of~\eqref{eqn:reduced_convergence_in_proof} we are left with the term containing
  \(C( \beta, \vt^{n} \omega, 1, T+1)\). Once more Assumption
  \ref{assu:properties_moments_of_solution_map} together with the ergodic
theorem and Lemma~\ref{lem:convergence-series} imply that:
  \[ 
    \lim_{n \to \infty} \frac{1}{n} \log{ C ( \beta, \vt^{n} \omega, 1, T {+}
    1)} = 0, 
  \] 
  thus completing the proof of \eqref{eqn:reduced_convergence_in_proof} and
hence of point \((i)\) of our theorem.

  \textit{Step 3.} Now, let us pass to the convergence in \(C^{\alpha}\)
backwards in time, which completes the proof of point \((ii)\). The proof is
analogous to, but simpler than the one we presented in Step \(2\).  The key
simplification consists in the fact that backwards in time the limit point \(
h_{\infty} (\omega, t) \) does not fluctuate (so the argument is essentially
deterministic, and does not rely on the law of large numbers), whereas forwards in time
all paths synchronize along the path \( h_{\infty}( \omega, n) \), whose
distribution does not vary with $ n $, but which fluctuates for fixed $ \omega
$, as $ n $ varies.

Since in
Equation~\eqref{eqn:support_time_inft_norm} we already proved convergence in the
\(\| \cdot \|_{\infty}\) norm, we now have to consider only the \([
\cdot]_{\alpha}\) seminorm. Up to replacing \(T\) with
  \(\lceil T \rceil\) assume \(T \in \NN\).  Then, consider \(n \in \NN\) with \(T<n {-} 1\) and \({-} T
  \leq t \leq T\) so that \(t = {-} T {-} 1 {+} \sigma\) with \( 1 \leq \sigma \leq
  2T {+} 1\). As in~\eqref{eqn:interpolation_in_proof} we define \(\theta =
\frac{\alpha}{\beta} \in (0,1)\) and use the interpolation bound of
Lemma~\ref{lem:interpolation-bound-general-regularity}:
  \begin{equation*}\label{eqn:interpolation_in_proof_backwards}
    \begin{aligned}
      & [h_{1}^{-n}(\omega, t) {-} h_{\infty}(\omega, t) {-} c(\omega, t,
      h^{1}_{0})]_{\alpha} \leq C( \alpha, \beta)\Big( \| h_{1}^{-n}( \omega, t) {-} h_{\infty}( \omega, t) {-}
	c(\omega, t, h^{1}_{0}) \|_{\infty}\Big)^{1 {-} \theta} \cdot \\
      & \qquad \qquad \qquad \qquad \qquad \qquad \qquad \qquad \qquad \cdot
      \Big( [ \log{ (\varphi_{n+t}^{\pi} (\vt^{-n}\omega)
	u^{1}_{0} )} ]_{\beta} {+} [ \log{(u_{\infty}(\omega, t) )} ]_{\beta}\Big)^{\theta}. 
    \end{aligned}
  \end{equation*}
Hence, in view of Equation~\eqref{eqn:support_time_inft_norm} it suffices to prove
that
\begin{equation*}
\begin{aligned}
\limsup_{n \to \infty} \left[  \frac{1}{n} \log{\bigg( \sup_{-T \leq t \leq  T } \Big(
	[\log{ (\varphi_{n+t}^{\pi} (\vt^{-n}\omega)
	u^{1}_{0} )} ]_{\beta} {+} [ \log{(u_{\infty}(\omega, t) )} ]_{\beta}
\Big) \bigg)}\right] \leqslant 0,
\end{aligned}
\end{equation*}
which we can further reduce to
\begin{equation}\label{eqn:proof-synchro-backwards-holder-estimate}
\begin{aligned}
\limsup_{n \to \infty} \left[  \frac{1}{n} \log{\bigg( \sup_{-T \leq t \leq  T }
      [\log{ (\varphi_{n+t}^{\pi} (\vt^{-n}\omega)
	u^{1}_{0} )} ]_{\beta} 
\bigg)}\right] \leqslant 0.
\end{aligned}
\end{equation}
Since the \([\cdot]_{\beta}\) seminorm is invariant under constant shifts (i.e.
\([ f + \zeta]_{\beta} = [f]_{\beta}\) for any \(f \colon \TT^{d} \to \RR\),
\(\zeta \in \RR\)),  and since $$ \log{ \big(
\varphi_{n+t}^{\pi}(\vt^{-n} \omega)u_{0}^{1} \big)} = \log{\big(
\varphi_{\sigma}(\vt^{-T-1} \omega) \circ \varphi^{\pi}_{n - T
-1}(\vt^{-n} \omega) u_{0}^{1} \big)  } +  \zeta(t, T, n, \omega,
u^{1}_{0}), $$ 
for some constant $ \zeta(t, T, n, \omega, u^{1}_{0}) \in \RR $,  we can rewrite the term inside the limit as
  \begin{align*}
    \Big[ \log{ \big( \varphi_{\sigma} (\vt^{{-} T {-} 1}\omega) \circ
    \varphi_{n {-} T {-} 1}^{\pi} ( \vt^{{-} n} \omega) u^{1}_{0} \big)}
\Big]_{\beta}.
  \end{align*}
At this point we want to exploit the regularising effect of $
\varphi_{\sigma} $, together with the fact that $ \varphi^{\pi}_{n - T - 1}
(\vt^{- n} \omega ) u_{0}^{1}$ is uniformly bounded in $ n $
(depending on $ \omega $).
 In fact, we observe that~\eqref{eqn:support_time_inft_norm} implies the
convergence \(\log\Big( \varphi_{n {-} T {-} 1}^{\pi} ( \vt^{{-} n}  \omega)
  u^{1}_{0} \Big) \to \log \Big( u_{\infty}( \omega, {-} T {-} 1)\Big)\) in \(C(\TT^{d})\) uniformly
  over \(u^{1}_{0}\). In addition, by the positivity of $ \varphi $ as in
  Assumption~\ref{assu:properties_moments_of_solution_map} and the fact that
  $ u_{\infty} $ is invariant under $ \varphi $, as in
  Theorem~\ref{thm:random_krein_rutman}, there exists
a $ c^{\prime} (\omega) > 0 $ such that $ u_{\infty}(\omega,-T-1)(x)
\geqslant c^{\prime} (\omega), \ \forall x \in \TT^{d}$. In particular, combining these
two observations we find a $ 0 < c(\omega) < c^{\prime} (\omega) $ and an $
n_{0} \in \NN $, such that
\[u_{\infty}( \omega, {-} T {-} 1)(x) \geqslant c(\omega), \quad 
\varphi_{n-T-1}^{\pi} ( \vt^{-n} \omega) u^{1}_{0} 
(x) \geq c(\omega) , \quad \forall x \in \TT^{d}, \  n \geqslant n_{0}, \  u^{1}_{0} \in
C(\TT^{d}).\] By point $ (ii) $ of Assumption
  \ref{assu:properties_moments_of_solution_map} we thus obtain
  \begin{align*} 
    \inf_{u^{1}_{0} \in C(\TT^{d})} \inf_{n> ( T {+} 1) \vee n_{0}} \inf_{\substack{ 1 \leq
    \sigma \leq 2T {+} 1 \\ x \in \TT^{d}}} \bigg[ \varphi_{\sigma}( \vt^{{-} T-1}\omega)
    \big( \varphi_{n -T{-} 1}^{\pi}( \vt^{{-} n} \omega) u^{1}_{0} \big)
(x)\bigg] & \geq \overline{c}(\omega),\\
\inf_{\substack{ 1 \leq
    \sigma \leq 2T {+} 1 \\ x \in \TT^{d}}} \bigg[ \varphi_{\sigma}( \vt^{{-} T-1}\omega)
    u_{\infty}(\omega, - T -1)  (x) \bigg] & \geq \overline{c}(\omega),
  \end{align*}
with
\[  \overline{c}(\omega) = c(\omega) \gamma( \vt^{{-} T-1} \omega,1, 2T {+} 1).\] 
Hence, applying Lemma~\ref{lem:derivatives-of-the-logarithm} together with the
regularising effect of $ \varphi_{\sigma} $ as in point $ (iii) $ of
Assumption~\ref{assu:properties_moments_of_solution_map}, we obtain for $ n
\geqslant n_{0} $:
\begin{align*}
\Big[ \log \big( \varphi_{\sigma}  (\vt^{{-} T {-} 1}\omega) \circ & \varphi_{n {-}
T {-} 1}^{\pi} ( \vt^{{-} n} \omega) u^{1}_{0} \big) \Big]_{\beta}  \leqslant
\left( \frac{1+ \big[ \varphi_{\sigma} (\vt^{{-} T {-} 1}\omega) \circ \varphi_{n
{-} T {-} 1}^{\pi} ( \vt^{{-} n} \omega) u^{1}_{0} \big]_{\beta}}{
\overline{c}(\omega)}\right)^{ \lfloor \beta \rfloor +1}  \\
& \leqslant  \left( \frac{1+C(\beta, \vt^{-T-1} \omega, 1, 2T+1)\| \varphi_{n {-} T {-} 1}^{\pi} ( \vt^{{-} n} \omega)
u^{1}_{0} \|_{\infty} }{
\overline{c}(\omega)} \right)^{\lfloor \beta \rfloor + 1} \\
& \leqslant  \left( \frac{1+ C(\beta, \vt^{-T-1} \omega, 1, 2T+1)M(\omega)}{
\overline{c}(\omega)}\right)^{\lfloor \beta \rfloor +1},
\end{align*}
with \(M(\omega) = \sup_{n \geqslant n_{0}} \|  \varphi_{n {-} T {-} 1}^{\pi} ( \vt^{{-} n}
\omega) u^{1}_{0}  \|_{\infty} < \infty\) in view
of~\eqref{eqn:support_time_inft_norm}. Hence~\eqref{eqn:proof-synchro-backwards-holder-estimate}
is proven, and this concludes the proof of the theorem.
\end{proof}

\section{Examples}

We treat two prototypical examples, which show the range of applicability of the
previous results. First, we consider the KPZ equation driven by a noise that is
fractional in time but smooth in space. In a second example, we consider the
KPZ equation driven by space-time white noise.

\subsection{KPZ driven by fractional noise}

Fix a Hurst parameter \(H \in \big( \frac{1}{2}, 1 \big)\) and consider
the noise \(\eta(t, x) = \xi^{H}(t)V ( x)\) for some
\(V \in C^{\infty}(\TT)\) and where \(\xi^{H}(t)= \partial_{t} \beta^{H}(t)\)
for a fractional Brownian motion \(\beta^{H}\) of Hurst parameter
\(H\). We restrict to \(H>\frac 1 2\) because the case $H=\frac{1}{2}$ is identical to the setting in
\cite{Sinai1991Buergers}, while for $H<\frac{1}{2}$ one encounters
difficulties with fractional stochastic calculus that lie beyond the scopes of this work.
For convenience, we let us define the noise \(\xi^{H}\) via its spectral
covariance function, see \cite[Section 3]{PipirasTaqqu2000Fbm}, namely as the
Gaussian process indexed by functions \(f  \colon \RR \to \RR\) such that
\(\int_{\RR} | \sigma|^{1 {-} 2 H}| \hat{f}( \sigma)|^{2} \ud \sigma < \infty\) (with
\( \hat{f}\) being the temporal Fourier transform), with covariance:
\begin{equation}\label{eqn:definition-covariance-function} 
  \EE \Big[ \xi^{H}(f) \xi^{H}(g) \Big] =
  c_{H} \int_{\RR} | \sigma|^{1 {-} 2H} \hat{f}(\sigma)
  \overline{\hat{g}(\sigma)} \ud \sigma, \qquad c_{H} = \frac{\Gamma(2 H {+} 1)
  \sin{(\pi H)} }{ 2 \pi}.  
\end{equation}
For the statement of the following lemma, recall the definition of
\(H^{\alpha}_{a}(\RR)\) given in~\eqref{eqn:definition-time-H-space}.
\begin{lemma}\label{lem:construction-noise-and-ergodicity}
Fix any \(H \in (\frac{1}{2}, 1), \ \alpha< H-1, \ a > \frac 1 2\). Let
\(\xi^{H}\) be the Gaussian process as defined by
  \eqref{eqn:definition-covariance-function}. Then, almost surely \(\xi^{H}\) takes
  values in \(H^{\alpha}_{a}(\RR)\). Next, define
  \(\Omega_{\mathrm{kpz}} = H^{\alpha}_{a}(\RR)\) and \(\mF= \mB(
  H^{\alpha}_{a}( \RR ))\) and let \(\PP\) be the law of \(\xi^{H}\) on
  \(\Omega_{\mathrm{kpz}}\).
Furthermore, let \( \{\vt^{z}\}_{z \in \ZZ}\) be the integer translation group,
which acts on smooth functions \(\varphi \in \mS(\RR)\) by:
  \[ \vt^{z} \varphi (t) = \varphi (t + z) , \quad \forall \ t \in \RR,\] 
and which is extended by duality to all distributions \(\omega \in
\Omega_{\mathrm{kpz}}\):
\[ \langle \vt^{z} \omega, \varphi \rangle  = \langle \omega, \vt^{-z} \varphi
\rangle, \quad \forall \varphi \in \mS(\RR).\]
 Then the space
  \( (\Omega_{\mathrm{kpz}}, \mF, \PP, \vt)\) forms an ergodic IDS.
  In addition, up to modifying \(\xi^{H}\) on a \(\vt-\)invariant null-set
  \(N_{0}\), for any \(\omega \in \Omega_{\mathrm{kpz}}\) there exists a \(\beta^{H}(\omega) \in
  C^{\alpha+1}_{\mathrm{loc}}(\RR)\) with: 
  \[\xi^{H}(\omega) = \partial_{t}
  \beta^{H}(\omega) \ \ \text{in the sense of distributions}, \qquad \beta^{H}_{0}(\omega)=0.\]
  Moreover, \( (\beta^{H}_{t})_{t \geq 0}\) has the law of a fractional
  Brownian motion of parameter \(H\).
\end{lemma}

\begin{proof}
  To show that \(\xi^{H}\) takes values in \(H^{\alpha}_{a}\) almost surely,
  observe that:
   \begin{align*}
     \EE \| \xi^{H} \|_{H^{\alpha}_{a}(\RR)}^{2} =  \sum_{j \geq -1}2^{2 \alpha j} \EE \|
     \Delta_{j} \xi (\cdot) / \langle \cdot \rangle^{a} \|_{L^{2}(\RR)}.
  \end{align*}
  Then one can bound:
  \begin{align*}
    \EE \Big[ \| \Delta_{j} \xi^{H}( \cdot) / \langle  \cdot
    \rangle^{a} \|_{L^{2}}^{2} \Big] & = \int_{\RR} \frac{1}{(1
    {+}|t|)^{2a} } \EE \big[ | \Delta_{j} \xi^{H} (t)|^{2} \big] \ud t
    \lesssim_{a} \sup_{t \in \RR} \EE \big[ | \Delta_{j} \xi^{H} (t)|^{2} \big] \\
    & = c_{H} \int_{\RR} | \sigma|^{1 {-} 2H} \varrho_{j}^{2}( \sigma)  \ud \sigma \lesssim 2^{j 2(1 {-} H)},
  \end{align*}
  where in the first line we used that \(2a > 1\). In the second line, we used that for \(j \geq 0\) \(\varrho_{j}( \cdot) =
  \varrho( 2^{{-} j} \cdot)\) for a function \(\varrho\) with support in an
  annulus (i.e. a set of the form \(\mA = \{ \sigma \ : \ A< | \sigma| <
  B\}\) for some \(0< A< B\)). This provides the required regularity estimate:
\[ \EE \| \xi^{H} \|_{H^{\alpha}_{a}(\RR)}^{2} < \infty. \] 

  The ergodicity is a consequence of the criterion in
  Proposition~\ref{prop:mixing_conditions} with \(\mathbf{B} =
H^{\alpha}_{a}(\RR)\), provided that we can verify
condition~\eqref{eqn:condition-vanishing-covariances} on the
  covariances. Observe that \(H^{\alpha}_{a}(\RR)\) is a separable Banach
space with dual \((H^{\alpha}_{a}(\RR))^* = H^{- \alpha}_{-a}(\RR)\) (this
result follows with the same calculations of \cite[Theorem 2.11.2]{Triebe2010}
for the unweighted case, see also the discussion in \cite[Section 7.2]{Triebe2010}), and that
the space \(\mS(\RR)\) of Schwartz functions, i.e.\ smooth functions with polynomial
decay at infinity of any order, is dense in \(H^{\beta}_{b}(\RR)\) for any
value of \(\beta \in \RR\) and \(b>0\) (see \cite[Remark 7.2.2]{Triebe2010}).\\
In view of these facts, and since we have shown that \(\EE \| \xi^{H}
\|_{\mathbf{B}}^{2} < \infty\), by
condition~\eqref{eqn:vanishing-covariances-simplified} of
Proposition~\ref{prop:mixing_conditions} it suffices to prove that for any
\(\varphi, \varphi^{\prime} \in \mS(\RR)\):
\[ \lim_{n \to \infty}  \mathrm{Cov}(\langle \xi^{H}, \varphi \rangle, \langle \vt^{n} \xi^{H},
\varphi^{\prime} \rangle) =0.\]
Here we can compute as follows:
  \begin{align*}
    \lim_{n \to \infty} \mathrm{Cov}( \langle \xi^{H}, \varphi \rangle, \langle \vt^{n} \xi^{H},
    \varphi^{\prime} \rangle) & \simeq \lim_{n \to \infty} \int_{\RR} | \sigma|^{1 {-} 2H}
    e^{i n \sigma} \hat{\varphi}( \sigma) \overline{\hat{\varphi}}^{\prime} ( \sigma) \ud \sigma\\
    & = 0. 
  \end{align*}
To obtain the last line we made use of the Riemann-Lebesgue lemma, since
\(f(\sigma) = | \sigma|^{1- 2H} \hat{\varphi}(\sigma)
\hat{\varphi^{\prime}}(\sigma)\) satisfies \(f \in L^{1}(\RR)\). In fact,
\(f\) is integrable near \(\sigma =0\) because \(H \in (1/2, 1)\) while
\(f(\sigma)\) decays polynomially fast for \(\sigma \to \pm \infty\) since
\(\varphi, \varphi^{\prime} \in \mS(\RR)\). Hence, ergodicity is proven.
  
  Now, one can define the primitive
  \(\beta^{H}(\omega)\) through
  \begin{align*}
    \beta_{t}^{H} = \xi^{H}(1_{[0,t]}), \ \ \text{in} \ \ L^{2}(\PP),
  \end{align*}
  so that following \cite[Section 3]{PipirasTaqqu2000Fbm} \(
  (\beta^{H}_{t})_{t \geq 0}\) has the
  law of a fractional Brownian motion. In particular, almost surely, the
  process \(\beta^{H}_{t} (\omega)\) has the required regularity. The null-set
  \( \overline{N}_0\) on which the result does not hold can be chosen to be
  \(\vt-\)invariant, by defining \(N_{0} = \bigcup_{z \in \ZZ} \vt^{z}
  \overline{N}_{0}\). Then one can set \(\xi^{H} = 0\) on \(N_{0}\).

\end{proof}
The next step is to show wellposedness of the SPDE:
\begin{equation}\label{eqn:she_fractional_noise}
  (\partial_{t} {-} \partial_{x}^{2}) u(t, x) = \xi^{H}(t) V(x) u (t, x),
  \qquad u(0,x) = u_0(x), \qquad (t,x) \in \RR_{+} \times \TT.
\end{equation}
We will work pathwise: since our noise is sufficiently regular, i.e.\ \(H
> \frac{1}{2}\) we can use Young integrals to make sense of the solution
(for \(H = \frac{1}{2}\), we would need It\^o integration instead). We will use
the following result:
\begin{lemma}\label{lem:young-integral}
For any \(\alpha, \beta, T>0\) such that \(\alpha+ \beta>1\) and \(f \in
C^{\alpha}([0, T]), \ g \in
C^{\beta}([0,T])\) one can define the Young integral
\[ \mathcal{I}_{t}(f, g) = \int_{0}^{t} f (s) \ud g(s).\] 
The map \(\mI\) is continuous between the spaces:
\[ \mI \colon C^{\alpha}([0, T]) \times C^{\beta}([0, T]) \to
C^{\beta}([0,T]),\]
satisfying the bound
\[ \| \mI(f, g) \|_{C^{\beta}([0,T])} \lesssim \| f \|_{C^{\alpha}
([0,T])} \| g \|_{C^{\beta}([0,T])}.\] 
If \(g \in C^{1}([0,T])\) the integral coincides with
\[ \mI_{t}(f, g) = \int_{0}^{t} f(s) \partial_{s} g(s) \ud s.\]
\end{lemma}
An instructive proof of this result is given in \cite[Proposition
6.11]{Friz-Victoir2010MultidimensionalRP} (for
\(\frac{1}{\alpha}-\)variation spaces instead of H\"older spaces), or in
\cite[Chapter 4]{Friz-Hairer2014CourseonRoughPaths}.

\begin{definition}\label{def:mild-solution-fractional-kpz}
  Consider \(H \in ( \frac 1 2 , 1)\) and let \(P_{t}\) be the periodic heat semigroup: \[P_{t} f (x) =
  \sum_{z \in \ZZ}(4 \pi t)^{{-} \frac d2}
\int_{\TT} f(y) e^{{-} \frac{|x {-} y - z |^{2}}{4t}}  \ud y.\] Fix
\(\omega \in \Omega_{\mathrm{kpz}}\) and \(\xi^{H}\) as in
Lemma~\ref{lem:construction-noise-and-ergodicity}. We
say that \(u \colon \Omega_{\mathrm{kpz}} \times \RR_{+} \times \TT \to \RR \) is a mild solution to
  Equation~\eqref{eqn:she_fractional_noise} if for any
\(\alpha < H\) and \(S>0\)
\[s \mapsto P_{t {-} s} [u(\omega, s, \cdot)  V(
  \cdot)](x) \in C^{\alpha}([S, t]), \qquad \forall t \geqslant S, \ x \in   \TT
\]
and if \(u\) satisfies:
\begin{equation}\label{eqn:midl-formulation-u} 
\begin{aligned}
& u( \omega, t, x)  = P_{t - S} u(\omega, S) (x)+ \int_{S}^{t} P_{t {-} s} [u(\omega, s, \cdot)  V(
\cdot)](x) \ud \beta_{s}^{H} ( \omega), \quad \forall t \geqslant S, \ x
\in \TT, \\
& \lim_{ S \to 0} u(\omega, S, \cdot)  = u_{0}(\cdot), \ \text{ in } \
C^{- \zeta}(\TT), \quad \forall \zeta>0,
\end{aligned}
\end{equation}
where, since the time regularities \(\alpha < H\) of the integrand and
\(\alpha^{\prime}< H\) of \(t \mapsto \beta^{H}_{t}(\omega)\) can be chosen so that
\(\alpha + \alpha^{\prime}>1\), because \(H \in (1/2, 1)\), the integral in~\eqref{eqn:midl-formulation-u} is
well-defined as a Young integral: see Lemma~\ref{lem:young-integral}.
\end{definition}

We can now prove the following result.
\begin{lemma}\label{lem:well-posedness-fractional-kpz}
  Consider \(H \in (\frac 1 2 , 1)\) and \(\Omega_{\mathrm{kpz}},\xi^{H}\) as in
  Lemma~\ref{lem:construction-noise-and-ergodicity}. For all \(\omega \in \Omega_{\mathrm{kpz}}\), for every \(u_0 \in C(\TT)\) there exists a unique
  mild solution \(u\) to Equation~\eqref{eqn:she_fractional_noise} such that
  for any \(\alpha<H, k \in \NN, 0 < S< T< \infty\):
  \[ (t, x) \mapsto \partial_{x}^{k} u(\omega, t,x) \in C^{\alpha}([S, T]
  \times \TT).\]
Moreover, the solution \(u\) can be represented as:
\begin{align*}
u(\omega, t, x) = e^{X(\omega, t, x)} w(\omega, t, x),
\end{align*}
with
\begin{equation}\label{eqn:for-X-fractional}
 X(\omega, t, x)  = \int_{0}^{t} P_{t {-} s} V (x) \ud \beta^{H}_{s}(\omega),
\end{equation}
and \(w\) a solution to
  \begin{equation}\label{eqn:for-w-fractional-noise}
  \begin{aligned} 
    (\partial_{t} {-} \partial_{x}^{2})w(t,x) & = 2 \partial_{x}X(t, x) \partial_{x} w (t , x) + (\partial_{x} X)^{2}(t, x)
    w(t, x), \\
    w(0, x) & = u_0(x). 
  \end{aligned}
  \end{equation}
  The solution map \((\varphi_{t}(\omega) u_{0}) (x) := u(\omega, t, x)\)
  defines a continuous linear RDS on \(C(\TT)\).
\end{lemma}
\begin{proof}

  Let us fix any \(\omega \in \Omega_{\mathrm{kpz}}\). Since all the
following arguments work pathwise, we will henceforth omit writing the
dependence on \(\omega\). To solve
Equation~\eqref{eqn:she_fractional_noise}, observe that \((s,x) \mapsto P_{t-s}
V(x) \in C^{\infty}([0,t] \times \TT)\), since \(V\) is smooth.
 We can then use Lemma~\ref{lem:young-integral} to define \(X(t ,x)\) by
Equation~\eqref{eqn:for-X-fractional}, so that formally \(X(t, x)\) solves: 
  \begin{align*}
    ( \partial_{t} {-} \partial_{x}^{2}) X(t, x) = \xi^{H}(t)V(x),
    \qquad X(0,x) = 0, \qquad \forall (t, x) \in \RR_{+} \times
    \TT.
  \end{align*} 
We will require a bound on the temporal regularity of \(X\). To this end, let us
write by integration by parts 
  \begin{align*}
    X(t, x) & =- \int_{0}^{t} \beta^{H}_{s} (P_{t {-} s} \partial_{x}^{2} V )(x) \ud s + V(x)
    \beta^{H}_{t},
  \end{align*}
  so that taking spatial derivatives in the above representation we obtain the
following regularity: 
\begin{equation}\label{eqn:proof-fractional-regularity-fro-X}
(t,x) \mapsto \partial_{x}^{k} X( t, x) \in
  C^{\alpha}([0, T] \times \TT)
\end{equation}
for any \( \alpha \in (\frac{1}{2}
, H), \ T >0, k \in \NN_{0}\).
We also observe that for any other path \(f \in C^{\alpha}([0,T]; \RR)\), by
Lemma~\ref{lem:young-integral} (taking smooth approximations of
\(\beta^{H}\) and using the continuity of the Young integral)
\begin{equation}\label{eqn:proof-fractional-ito-for-X} 
\int_{0}^{t} f_{s} \ud X(s, x) = \int_{0}^{t} f_{s} \partial_{x}^{2} X
(s,x) \ud s + \int_{0}^{t} f_{s} V(x) \ud \beta^{H}_{s}.
\end{equation}
  Now, as a consequence of Lemma~\ref{lem:schauder-regularization} there exists a
  unique mild solution \(w\) to Equation~\eqref{eqn:for-w-fractional-noise} and the same result implies that the
  solution \(w\) satisfies:
\begin{equation}\label{eqn:prof-fractional-regularity-w} 
(t, x) \mapsto \partial^{k}_{x}w( \omega, t , x) \in
    C^{1}_{\mathrm{loc}}((0,T] \times \TT),
\end{equation}
  for any \(T>0, k \in \NN_{0}\). 
  At this point, let us define \(u\) as \(u = e^{X}w\). For any fixed \(S>0\) we find that, by the chain rule
(which holds in view of Lemma~\ref{lem:young-integral}, by taking smooth approximations
of the integrand and integrator)
\begin{align*}
 u(t,x) & =u(S,x) + \int_{S}^{t} e^{X(s,x)} w(s,x) \ud X(s,x) +
\int_{S}^{t} e^{X(s,x)} w(s,x) \partial_{s} w(s,x) \ud s \\
& = u(S,x) +  \int_{S}^{t} \partial_{x}^{2} u(s, x) \ud s + \int_{S}^{t} u (s, x)
V(x) \ud \beta^{H}_{s},
\end{align*}
where we used~\eqref{eqn:proof-fractional-ito-for-X}
and~\eqref{eqn:for-w-fractional-noise}. Now by
\eqref{eqn:proof-fractional-regularity-fro-X} and
\eqref{eqn:prof-fractional-regularity-w} \[(t,x) \mapsto
\partial^{k}_{x} u(t,x) \in C^{\alpha}([S,T] \times \TT)\] for any \(k \in
\NN_{0}, \alpha \in (\frac{1}{2}, H)\) and \(0<S<T\). In particular, we find that
\[ (s,x) \mapsto \partial_{x}^{k} P_{t-s} [u (s, \cdot)V(\cdot)](x)
 \in C^{\alpha}([S,t] \times \TT), \quad \forall \ 0<S \leqslant t, \ \ \alpha
\in \Big(\frac{1}{2}, H \Big).\] 
Then we can define \( \widetilde{u}\) via the Young integral:
\[ \widetilde{u}(t,x) = P_{t - S} u(S,x) + \int_{S}^{t} P_{t - s} [u(s,
\cdot) V(\cdot)](x) \ud \beta^{H}_{s}.\]
An application of the chain rule show that \(u- \widetilde{u}\) is a smooth
solution to \( (\partial_{t} - \partial_{x}^{2}) (u- \widetilde{u}) = 0\), and
hence \(u = \widetilde{u}\). To conclude that \(u\) satisfies
Equation~\eqref{eqn:midl-formulation-u} we need that
\[ \lim_{S \to 0} u(S, \cdot) = u_{0}, \quad \text{ in } \ C^{-
\zeta}(\TT), \ \forall \zeta>0,\]
which follows since \(\lim_{S \to 0} w (S, \cdot) = u_{0}\). 
Conversely, one can follow the steps of this proof
backwards to find that every mild solution is of the required
form \(u=e^{X}w\).

  
  Finally, Lemma~\ref{lem:schauder-regularization} also implies that the
  solution map is, for fixed \(t \geq 0\), an element of \(\mL(C(\TT))\). To
  conclude we have to show that the cocycle property holds for \(\varphi\),
  namely that for \(n \in \NN_{0}\):
  \[ \varphi_{t + n}(\omega) u_{0} = \varphi_{t} (\vt^{n} \omega) \circ
  \varphi_{n}(\omega) u_{0}.\] 
  First observe that \(X_{t+ n}(\omega) - P_{t} X_{n}(\omega) =
  X_{t}(\vt^{n} \omega)\). Hence, recalling the decomposition of \(\varphi\):
  \[ \varphi_{t + n}(\omega) u_{0} = e^{X_{t}(\vt^{n} \omega)}
  (e^{P_{t} X_{n}(\omega)} w_{t+n}(\omega)),\]
  so that the cocycle property is proven since one can check that \(
    \overline{w}_{t}(\omega) = e^{P_{t}
  X_{n}(\omega)} w_{t+n}(\omega)\) solves
  Equation~\eqref{eqn:for-w-fractional-noise} with \(X(\omega)\) replaced by
  \(X (\vt^{n} \omega)\) and \( \overline{w}_{0} =
  \varphi_{t}(\omega)u_{0}\). 
\end{proof}

We can now prove that Equation \eqref{eqn:she_fractional_noise} falls in the
framework of the theory developed in the previous sections.

\begin{proposition}\label{prop:kpz_fractional_noise_rds_construction}

  The RDS \(\varphi\) introduced in
  Lemma~\ref{lem:well-posedness-fractional-kpz} satisfies, for any
  \(\beta>0\), Assumption \ref{assu:properties_moments_of_solution_map}.
  In particular, for all \(\omega \in \Omega_{\mathrm{kpz}}\), for any \(u_{0} \in C
  (\TT), u_{0}>0\), the function \(t \mapsto \log{(
  \varphi_{t}(\omega) u_{0} )}=: h_{t}(\omega)\) is the unique mild solution to
  \begin{equation}\label{eqn:kpz-fractiona-noise} 
    (\partial_{t} {-} \partial_{x}^{2}) h(\omega, t, x) = ( \partial_{x} h(
    \omega, t,x))^{2} + V(x)\xi^{H}(\omega, t), \qquad h(\omega, 0,x) = \log{
(u_{0}(x))},
  \end{equation}
meaning that for any \(\alpha < H, k \in \NN, 0 < S< T < \infty\):
  \[ (t, x) \mapsto \partial^{k}_{x} h(\omega, t,x) \in C^{\alpha}(( S, T) \times \TT) \] 
  and for all \(0 < S \leqslant t, \ \zeta>0\) and \(x \in \TT\):
\begin{align*}
& h(\omega, t, x) = P_{t-S} h (\omega,S)(x) + \int_{S}^{t} P_{t-s} [(\partial_{x}
h(\omega, s))^{2}](x) \ud s + \int_{S}^{t} P_{t-s}[V](x) \ud
\beta^{H}_{s}, \\
& \lim_{S \to 0} h(\omega, S, \cdot) = h_{0}( \cdot) \quad \text{ in } \quad
C^{-\zeta}.
\end{align*}
  Such solution satisfies all the results of
  Theorem~\ref{thm:synchronization_for_kpz}.

\end{proposition}

\begin{proof}
 Let us prove that $ \varphi $ satisfies
Assumption~\ref{assu:properties_moments_of_solution_map}. Points $ (i)-
(iii) $ of this assumption have to be verified for any $ \omega \in
\Omega_{\mathrm{kpz}} $: to lighten the notation
we will consider such $ \omega $ fixed and may not write explicitly the
dependence on it, as long as no confusion is possible.

\textit{(i).} Let us start with the kernel representation of $ \varphi $. Formally, one can
  write:
  \begin{equation}\label{eqn:dirac-in-flow}
    K(t, x, y) = \varphi_{t}( \delta_{y})(x).
  \end{equation}
  This can be made rigorous, if one can start
  Equation~\eqref{eqn:she_fractional_noise} in \(\delta_{y}\). In
  Lemma~\ref{lem:dirac-delta-continuity} we show that that for any \( \gamma>0, \
  \{\delta_{y}\}_{y \in \TT} \subseteq B^{- \gamma}_{1, \infty},\) and
  \(\| \delta_{x} - \delta_{y} \|_{B^{- \gamma}_{1, \infty}} \lesssim |x -
  y|^{\gamma}\). In addition, by Lemma~\ref{lem:well-posedness-fractional-kpz} the solution \(\varphi_{t}
  u_{0}= e^{X_t} w_{t},\) where \(X(t,x) = \int_{0}^{t} P_{t-s}[V](x) \ud
\beta^{H}_{s}\) does not depend on \(u_{0}\) and \(w\) is the solution to:
  \[ ( \partial_{t} - \partial_{x}^{2}) w  = 2 \partial_{x} X
  \partial_{x} w + (\partial_{x}X)^{2} w, \qquad w(0) = u_0.   \] 
  As the coefficients \( (\partial_{x} X)^{2}\) and \(\partial_{x}X\) are
  smooth in space and continuous in time, Lemma~\ref{lem:schauder-regularization} implies that the equation for
  \(w\) can be started also in \(u_{0} = \delta_{y}\). Let us denote with
\(w^{y}\) such solution. The same Lemma~\ref{lem:schauder-regularization}
implies the following bound, for any \(\eta \in [0,2), \ t \in [S, T]\)
and some \(q>0\):
\begin{align*}
 \| w^{y}(t, \cdot) - w^{z}(t, \cdot) \|_{B^{\eta- \gamma}_{1, \infty}}
\leqslant \| \delta_{y} - \delta_{z}\|_{B^{-\gamma}_{1, \infty}}e^{C(S,
T)(1 + \sum_{k = 0}^{ \lfloor \eta \rfloor + 1}
    \sup_{0 \leq t \leq T}\| \partial_{x}^{k} X_{t} \|_{\infty})^{q}}.
\end{align*}
We can choose \(\eta, \gamma\) so that \(\eta- \gamma>1\). In this case, by
Besov embeddings
\begin{align*} \|  w^{y}(t, \cdot) - w^{z}(t, \cdot)  \|_{C(\TT^{d})} & \lesssim \| w^{y}(t, \cdot) - w^{z}(t, \cdot)  
\|_{C^{\eta- \gamma -1}} \simeq \|  w^{y}(t, \cdot) - w^{z}(t, \cdot) 
\|_{B^{\eta - \gamma -1}_{\infty, \infty}}\\
&  \lesssim \| w^{y}(t, \cdot) - w^{z}(t, \cdot)  \|_{B^{\eta - \gamma}_{1,
\infty}}.
\end{align*} 
 Hence \(K\) in Equation~\eqref{eqn:dirac-in-flow} is rigorously defined as \( K(t, x, y) =
  e^{X(t,x)} w^{y}(t,x)\). In particular, putting together the previous bounds,
we have that
\begin{align*}
\sup_{S \leqslant t \leqslant T}\| K(t, \cdot, y) - K (t, \cdot, z) \|_{\infty} \leqslant
|y-z|^{\gamma}e^{C(S, T)(1 + \sum_{k = 0}^{ \lfloor \eta \rfloor + 1}
    \sup_{0 \leq t \leq T}\| \partial_{x}^{k} X_{t} \|_{\infty})^{q}},
\end{align*}
which implies that for any \(t>0, K(t) \in C(\TT \times \TT)\). That \(K\) is a
  fundamental solution for the PDE follows by linearity, thus concluding the
  proof of \((i)\) in
  Assumption~\ref{assu:properties_moments_of_solution_map}. 

\textit{(ii).} The fact that \(K\) is strictly positive, as required in point \((ii)\) of the assumptions
  is the consequence of a strong maximum principle (cf.
  \cite[Theorem 2.7]{Lieberman1996SecondOrderParabolicDEs}) applied to
\(w\), since \(e^{X} >0\). 

\textit{(iii).} The smoothing effect of point \((iii)\) in
  Assumption~\ref{assu:properties_moments_of_solution_map} follows again from
  the representation \( \varphi_{t} u_{0} = e^{X_{t}} w_{t}\) and the spatial
  smoothness of both \(X\) and \(w\), which we already showed in the proof of
  Lemma~\ref{lem:well-posedness-fractional-kpz}. 

\textit{(iv).} In particular, the just quoted smoothing
  effect can be made quantitative, via the estimate of
  Lemma~\ref{lem:schauder-regularization}, to obtain that for \(0< S<T <
  \infty\) there exist deterministic constants \(C(S, T), q \geq 0\) such that:
  \[ 
    \sup_{S \leq t \leq T}\| \varphi_{t}(\omega) u_{0} \|_{C^{\beta}} \leq 
    \| u_{0} \|_{\infty} e^{C(S, T)(1 + \sum_{k = 0}^{ \lfloor \beta \rfloor + 1}
    \sup_{0 \leq t \leq T}\| \partial_{x}^{k} X_{t}(\omega) \|_{\infty})^{q}}.  
  \]
  Note that at first Lemma~\ref{lem:schauder-regularization} allows to
  regularize at most by \(\eta< 2\), but splitting the interval \([0,S]\) into
  small pieces and applying iteratively the result on every piece provides the
  result for arbitrary \(\beta\).
  Now observe that in view of~\eqref{eqn:for-X-fractional} for any
\(k \in \NN\):
\begin{align*}
 | \partial_{x}^{k} X(t,x)| & = \bigg\vert \int_{0}^{t} P_{t-s}[
\partial_{x}^{k} V](x) \ud \beta^{H}_{s} \bigg\vert \\
& \lesssim \| s \mapsto P_{t-s} [\partial_{x}^{k} V](x) \|_{C^{\alpha}([0,T])} \|
s \mapsto \beta^{H}_{s} \|_{C^{\alpha}([0,T])},
\end{align*} 
for any \(\alpha \in (\frac{1}{2} , H)\), as an application of
Lemma~\ref{lem:young-integral}. Since \(s \mapsto P_{t-s}[
\partial_{x}V](x)\) is smooth (since \(V\) is smooth), we have obtained:
\begin{align*}
 \sup_{0 \leqslant t \leqslant T} \| \partial_{x}^{k}X (t) \|_{\infty}
\leqslant C(T,V) \| \beta^{H} \|_{C^{\alpha}([0,T])}.
\end{align*}
Now, for any \(q \geqslant 0\)
\[ \EE  \| \beta^{H} \|_{C^{\alpha}([0,T])}^{q} < \infty.\]
This follows from Kolmogorov's continuity criterion, or via calculations
similar to those in Lemma~\ref{lem:construction-noise-and-ergodicity}
(note that we show \( \EE \| \xi^{H} \|_{H^{\alpha}_{a}} < \infty\), but
similar calculations show that \(\EE \| \xi^{H} \|_{B^{\alpha, a}_{\infty,
\infty}}^{q}< \infty\), for any \(q \geqslant 0\)). We can conclude that:
  \[ \sum_{k = 1}^{\lfloor \beta \rfloor + 1} \EE \sup_{0 \leq t \leq T} \|
  \partial_{x}^{k} X_{t} \|_{\infty}^{q} < \infty,  \]
  thus proving the first average bound of point \((iv)\) in
  Assumption~\ref{assu:properties_moments_of_solution_map}.
 As for the second bound, in view of
  Lemma~\ref{lem:completeness_and_properties_positive_fcts}, one has:
  \begin{align*} 
    d_{H}( \varphi_{t}^{\pi}(\omega) f, f) & \lesssim \| \log{
      (\varphi_{t}(\omega) f)} - \log{ \int_{\TT}(\varphi_{t}(\omega) f) (x)}  \ud x\|_{\infty} + \|
   \log{ f} - \log{ \int_{\TT} f(x) \ud x}  \|_{\infty}\\
   & \lesssim \| \log{ ( \varphi_{t}(\omega) f)}  \|_{\infty} + \| \log{ f}
   \|_{\infty},
  \end{align*}
  so that our aim is to bound 
  \[ \EE \sup_{S \leq t \leq T} \| \log{ ( \varphi_{t} f)}
  \|_{\infty}.\] 
  On one side, one has the upper bound:
  \[ 
    \log{( \varphi_{t}(\omega) f) \leq \log{ \| \varphi_{t}(\omega) f
    \|_{\infty}} } \lesssim_{S, T} \log{ \| f \|_{\infty}} + \Big(1 +
      \sum_{k = 0}^{1} \sup_{0 \leq t \leq T}\| \partial_{x}^{k} X_{t}(\omega) \|_{\infty}
    \Big)^{q},  
  \] 
  which is integrable. As for the lower bound, observe that
  \(\log{ (\varphi_{t}(\omega) f)} = X_{t}(\omega) + \log{ w_{t}(\omega) }\).
  One can check that \(v_{t}(\omega) = \log{ w_{t}(\omega)}\) is a 
  solution to the equation:
  \begin{equation}\label{eqn:for-v} (\partial_{t} - \partial_{x}^{2}) v = 2( \partial_{x} X )
    \partial_{x} v + (\partial_{x} X)^{2}+ (
  \partial_{x} v)^{2}, \qquad v(0) = \log{f}. 
  \end{equation}
  By comparison (cf. \cite[Theorem 2.7]{Lieberman1996SecondOrderParabolicDEs}),
  one has: \(v(t,x) \geq - \| \log{ f } \|_{\infty}, \forall t \geq 0, x \in \TT\). So
  assuming that \(q \geq 1\), one has overall:
  \[ \| \log{( \varphi_{t}(\omega) f)}  \|_{\infty} \lesssim_{f} 1 + \Big(1 +
      \sum_{k = 0}^{1} \sup_{0 \leq t \leq T}\| \partial_{x}^{k} X_{t}(\omega) \|_{\infty}
  \Big)^{q}, \]
  which is once again integrable, completing the proof of $ (iv) $. 

We conclude that the required assumptions are satisfied
  and we can apply Theorem~\ref{thm:synchronization_for_kpz}.
  Finally, the fact that \(h_{t}\) satisfies the claimed smoothness assumption and is a mild
  solution to the KPZ equation driven by fractional noise follows by the same
steps of the proof of Lemma~\ref{lem:well-posedness-fractional-kpz}.

\end{proof}

\begin{remark}\label{rem:constants_for_fractional_noise}

  In the same setting as in
  Proposition~\ref{prop:kpz_fractional_noise_rds_construction}, for any \(h^1_{0}, h^2_{0} \in C(\TT)\) the
  constant \(c(\omega, t, h^{1}_{0}, h^{2}_{0})\) in
  Theorem~\ref{thm:synchronization_for_kpz} can be chosen
  independent of time.
\end{remark}

\begin{proof}
  Observe that it is
  sufficient to prove that there exists a constant \(
  \overline{c}(\omega, h^{1}_{0}, h^{2}_{0})\) such that for every \(\omega \in
  \widetilde{\Omega}\) (for an invariant set \( \widetilde{\Omega}\) of full
  \(\PP-\)measure) and any \(T >0\):
  \[ 
    \limsup_{n \to \infty} \frac{1}{n} \log{ \sup_{t \in [n, n {+} T]}} |
    c(\omega, t, h^{1}_{0}, h^{2}_{0}) {-} \overline{c}(\omega,
    h^{1}_{0}, h^{2}_{0}) | \leq \EE \log \big( \tau (\varphi_{1})\big).
  \] 
  As a simple consequence of Theorem~\ref{thm:synchronization_for_kpz} one has:
  \[ 
    \limsup_{n \to \infty} \frac{1}{n} \sup_{t \in [n, n {+}T]} \log \|
    \Pi_{\times} (h_{1}( \omega, t) {-} h_{2}(\omega, t) )\|_{\alpha} \leq  \EE
    \log \big( \tau (\varphi_{1})\big), 
  \] 
  for any \(\alpha > 0\), where \(\Pi_{\times}\) is defined  for \(f \in
  C(\TT)\) as \( \Pi_{\times} f = f {-} \int_{\TT} f(x) \ud x,\) and without
  loss of generality one can choose the constants to be defined as: 
  \[
    c(\omega, t, h^{1}_{0}, h^{2}_{0}) = \int_{\TT} h_{1}(\omega, t, x) {-}
    h_{2}(\omega, t, x) \ud x.
  \]
  Since by Proposition~\ref{prop:kpz_fractional_noise_rds_construction},
  \(h_{i}\) is a solution to the KPZ Equation one has that:
  \begin{equation}\label{eqn:constants_fractional_noise} 
    \partial_{t} c(\omega, t, h^{1}_{0}, h^{2}_{0}) = \int_{\TT}
    \partial_{x}(h_1 {-} h_2) \partial_{x} (h_1 {+} h_2 )( \omega, t, x) \ud x.
  \end{equation} 
  Now, in view of \((i)\) of Theorem
  \ref{thm:synchronization_for_kpz} one has:
  \[ \limsup_{n \to \infty} \frac{1}{n} \sup_{t \in [n, n {+} 1]} \log{ |
  \partial_{t}c (\omega, t, h^{1}_{0}, h^{2}_{0})|} \leq  \EE
    \log \big( \tau (\varphi_{1})\big).\]
  In particular this implies that there exists a constant \(
  \overline{c}(\omega, h^{1}_{0}, h^{2}_{0}) : = \lim_{t \to \infty} c(\omega,
  t, h^{1}_{0}, h^{2}_{0})\) and in addition
  \[ | \overline{c}( \omega, h^{1}_{0}, h^{2}_{0}) {-} c(\omega, t, h^{1}_{0},
  h^{2}_{0})| \leq \int_{t}^{\infty} | \partial_{s} c(\omega, s,
  h^{1}_{0}, h^{2}_{0})|\ud s \lesssim e^{{-} c t}, \]
  for any \(0< c < - \EE \log \big( \tau (\varphi_{1})\big)\), which proves the required result.

\end{proof}

\subsection{KPZ driven by space-time white noise}

In this section we consider the random force \(\eta\) in \eqref{eqn:intro_kpz}
to be space-time white noise \(\xi\) in one spatial dimension. That is, a Gaussian processes indexed by
functions in \(L^{2}( \RR \times \TT)\) such that:
\begin{equation}\label{eqn:space-time-wn-covariance}
  \EE \Big[ \xi(f) \xi(g) \Big] = \int\limits_{\RR \times \TT} f(t,x)g(t,x)
  \ud t \ud x.
\end{equation}
For the next result recall the definition of \(H^{\alpha}_{a}(\RR \times \TT)\)
from~\eqref{eqn:definition-space-time-H-space}.
\begin{lemma}\label{lem:space-time-wn-probability-space}
 Fix any \(\alpha< -1\) and \(a > \frac 1 2\). Let \(\xi\) be a Gaussian
process as defied in \eqref{eqn:space-time-wn-covariance}. Then, almost surely \(\xi\) takes values in
  \(H^{\alpha}_{a}(\RR \times \TT)\). In particular
\[ \EE \| \xi \|_{H^{\alpha}_{a}(\RR \times \TT)}^{2} < \infty. \] 
Next, define \(\Omega_{\mathrm{kpz}} = H^{\alpha}_{a} (\RR \times \TT), \mF = \mB(H^{\alpha}_{a}(\RR \times \TT))\)
  and let \(\PP\) be the law of \(\xi\) on \(\Omega_{\mathrm{kpz}}\).
Furthermore, let \( \{\vt^{z}\}_{z \in \ZZ}\) be the integer translation group,
which acts on smooth functions \(\varphi \in \mS(\RR \times \TT)\) by:
  \[ \vt^{z} \varphi (t, x) = \varphi (t + z, x) , \quad \forall \ (t,x) \in \RR \times \TT,\] 
and which is extended by duality to all distributions \(\omega \in
\Omega_{\mathrm{kpz}}\):
\[ \langle \vt^{z} \omega, \varphi \rangle  = \langle \omega, \vt^{-z} \varphi
\rangle, \quad \forall \ \varphi \in \mS(\RR \times \TT).\]
 Then the space
  \( (\Omega_{\mathrm{kpz}}, \mF, \PP, \vt)\) forms an ergodic IDS.
\end{lemma}

\begin{proof}
  We start by showing that \(\xi\) takes values in \(H^{\alpha}_{a}(\RR \times
  \TT)\) almost surely. By definition:
  \begin{align*}
    \EE \| \xi \|_{H^{\alpha}_{a}}^{2} =  \sum_{j \geq -1}2^{2 \alpha j} \EE \|
    \Delta_{j} \xi (\cdot) / \langle \cdot \rangle^{a} \|_{L^{2}},
  \end{align*}
  and for the latter one has:
  \begin{align*}
    \EE \Big[ \| \Delta_{j} \xi ( \cdot ) / \langle  \cdot
    \rangle^{a} \|_{L^{2}}^{2} \Big] & = \int_{\RR \times \TT } \frac{1}{(1
    {+}|t|)^{2a} } \EE \big[ | \Delta_{j} \xi (t, x)|^{2} \big] \ud t \ud x
    \lesssim_{a} \sup_{(t, x) \in \RR \times \TT} \EE \big[ | \Delta_{j} \xi
    (t,x)|^{2} \big]
    \\
    & = \int_{\RR \times \TT} |\mF^{-1}_{\RR \times \TT }\varrho_{j} |^{2}(t,
    x)  \ud t \ud x \simeq \sum_{k \in \ZZ} \int_{\RR}
    \varrho_{j}^{2}(k, \sigma) \ud \sigma \lesssim 2^{2j},
  \end{align*}
  where we used that \(2a > 1\) and that for \(j \geq 0\) \(\varrho_{j}( \cdot) =
  \varrho( 2^{{-} j} \cdot)\) for a function \(\varrho\) with support in
  an annulus. We can conclude that
\[ \EE \| \xi \|_{H^{\alpha}_{a}(\RR \times \TT)}^{2}< \infty. \] 
The last step in the proof is to show ergodicity of the IDS. Here we apply
  Proposition~\ref{prop:mixing_conditions}, so we have to check that
condition~\eqref{eqn:condition-vanishing-covariances}. We have proven
that \(\EE\| \xi \|_{H^{\alpha}_{a}}^{2} < \infty\), and (as in the proof of
Lemma~\ref{lem:construction-noise-and-ergodicity}) let us note that
\((H^{\alpha}_{a}(\RR \times \TT) )^*= H^{- \alpha}_{a}(\RR \times \TT)\) and
\(\mS(\RR \times \TT)\) is dense in \(H^{\beta}_{b}(\RR \times \TT)\) for every
\(\beta \in \RR, b>0\). Hence we can deduce ergodicity from the simplified
criterion~\eqref{eqn:vanishing-covariances-simplified}, namely we have to prove
that for \(\varphi, \varphi^{\prime} \in \mS( \RR \times \TT)\):
  \begin{align*}
    \lim_{n \to \infty} \mathrm{Cov}( \langle \xi, \varphi \rangle, \langle \vt^{n} \xi,
    \varphi^{\prime} \rangle ) = \lim_{n \to \infty}  \int_{\RR \times \TT} \varphi(t, x)
    \varphi^{\prime}(t - n, x) \ud t \ud x = 0,
  \end{align*}
  which is true because of the rapid decay at infinity of \(\varphi, \varphi^{\prime}\). This concludes the proof.
\end{proof}

Now we will consider \(h, u\) the respective solutions to the KPZ and
stochastic heat equation driven by space-time white noise:
\begin{align}
  (\partial_{t} {-} \partial_{x}^{2}) h &= (\partial_{x}h)^{2} {+} \xi {-}
  \infty, \qquad &h(0, x) = h_0(x), \qquad (t, x) \in \RR_{+} \times \TT
  \label{eqn:kpz_st_white_noise},\\
  (\partial_{t} {-} \partial_{x}^{2}) u & = u (\xi {-} \infty), \qquad &
  u(0,x) = u_0(x), \qquad (t,x) \in \RR_{+} \times \TT,
  \label{eqn:rhe_st_white_noise} 
\end{align}
in the sense of \cite[Theorem 6.15]{GubinelliPerkowski2017KPZ}.
Here the presence of the infinity ``\(\infty\)'' indicates the necessity of
renormalisation to make sense of the solution.
Wellposedness of the stochastic heat equation \eqref{eqn:rhe_st_white_noise}
can be proven also with martingale techniques, which do not provide a solution
theory for the KPZ equation, though. Instead, here we make use of pathwise
approaches to solving the above equations \cite{Hairer2013SolvingKPZ,
Hairer2014, GubinelliImkellerPerkowski2015}, that require tools such as regularity
structures or paracontrolled distributions. The main reference for us will be
\cite{GubinelliPerkowski2017KPZ}, which provides both a comprehensible
introduction (see for example Chapter 3) and a complete picture of the tools available in paracontrolled
analysis. Such theories consider smooth
approximations \(\xi_{\ve}\) of the noise \(\xi\), for which the equations are
well-posed, and study the convergence of the solutions as \(\ve \to 0\). The
renormalisation can then be understood as a Stratonovich-It\^o correction
term. We refer to the mentioned works as \textit{pathwise} approaches, since
they are completely deterministic, given the realization of the noise and some
functionals thereof. These functionals are collected in a random variable
called the \textit{enhanced noise} \(\YY(\omega)\).

In Lemma~\ref{lem:translation-of-enhanced-noise} we recall the construction of the
enhanced noise and record its transformation under \(\vt^{z}\).
Lemma~\ref{lem:translation-of-enhanced-noise} together with the existing
solution theory for the equation guarantee that the solution map forms a random
dynamical system. This is the content of the following result, which stands in
analogy to Lemma~\ref{lem:well-posedness-fractional-kpz} for fractional noise.



\begin{lemma}\label{lem:kpz_st_white_noise_solution_theory}

  Consider \((\Omega_{\mathrm{kpz}}, \mF, \PP)\) as in
  Lemma~\ref{lem:space-time-wn-probability-space}. Then for every \(\omega \in
  \Omega_{\mathrm{kpz}}\) and \(u_{0} \in C(\TT)\) there exists a unique
  solution \(u\) to Equation~\eqref{eqn:rhe_st_white_noise} in the sense of
  \cite[Theorem 6.15]{GubinelliPerkowski2017KPZ}, associated to the enhanced
  noise \( \YY(\omega)\) as in Lemma~\ref{lem:translation-of-enhanced-noise}
and the solution map \(\varphi_{t}(\omega) u_{0} = u_{t}\) defines a continuous
  linear RDS on \(C(\TT)\).
  

\end{lemma}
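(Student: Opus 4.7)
The plan is to apply the pathwise wellposedness theorem \cite[Theorem 6.15]{GubinelliPerkowski2017KPZ} to the enhanced noise $\YY(\omega) \in \mY_{\mathrm{kpz}}$ provided by Lemma~\ref{lem:translation-of-enhanced-noise}, and then to verify the cocycle property by hand from the transformation rule \eqref{eqn:translation-on-enhanced=noise}. Concretely, for fixed $\omega \in \Omega_{\mathrm{kpz}}$ and any $u_0 \in C(\TT)$, Theorem 6.15 of \cite{GubinelliPerkowski2017KPZ} produces a unique paracontrolled mild solution $u$ to \eqref{eqn:rhe_st_white_noise} in the decomposed form
\[
u = e^{Y + 2 Y^{\TA} + Y^{\TB}} w^{P},
\]
with $w^{P}$ paracontrolled by $(\mP, \partial_x \mP \reso \partial_x Y)$ as described in Remark~\ref{rem:pathwhise-solution-theory}. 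Setting $\varphi_t(\omega) u_0 := u_t$ defines the map.

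Linearity of $\varphi_t(\omega)$ is immediate: the exponential prefactor $e^{Y + 2 Y^{\TA} + Y^{\TB}}$ is deterministic once $\YY(\omega)$ is fixed, and $w^{P}$ solves a linear paracontrolled equation with initial datum $w^{P}_0 = u_0 e^{-(Y + 2 Y^{\TA} + Y^{\TB})(0)}$ depending linearly on $u_0$. Continuity as an element of $\mL(C(\TT))$, joint continuity in $(\YY, u_0)$, and the required measurability in $(t, \omega, u_0)$ all follow from the stability estimate in \cite[Theorem 6.15]{GubinelliPerkowski2017KPZ} together with the measurability of $\omega \mapsto \YY(\omega)$ provided by Lemma~\ref{lem:translation-of-enhanced-noise}.

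The main obstacle, and the only genuinely non-trivial point, is the cocycle property
\[
\varphi_{t+n}(\omega) u_0 = \varphi_t(\vt^n \omega) \circ \varphi_n(\omega) u_0, \qquad t \geq 0,\ n \in \NN_0.
\]
By uniqueness in \cite[Theorem 6.15]{GubinelliPerkowski2017KPZ} it suffices to show that $\tilde u(s) := u(\omega, n+s)$ is the paracontrolled solution associated to the enhanced noise $\YY(\vt^n \omega)$ started at $\tilde u(0) = \varphi_n(\omega) u_0$. I would proceed as in Lemma~\ref{lem:well-posedness-fractional-kpz} but at the paracontrolled level: setting
\[
\tilde w^{P}_s := e^{P_s (2 Y^{\TA}_n + Y^{\TB}_n)(\omega)} \, w^{P}_{n+s}(\omega),
\]
and using that $Y_s(\vt^n \omega) = Y_{s+n}(\omega)$ while $\YY^\tau_s(\vt^n\omega) = \YY^\tau_{s+n}(\omega) - P_s \YY^\tau_n(\omega)$ for the remaining components by \eqref{eqn:translation-on-enhanced=noise}, a direct computation rewrites $\tilde u$ as $\tilde u(s) = e^{\tilde Y + 2 \tilde Y^{\TA} + \tilde Y^{\TB}}(s)\, \tilde w^{P}_s$ with $\tilde Y^\tau$ the components of $\YY(\vt^n\omega)$. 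The delicate step is then to verify that $\tilde w^{P}$ is paracontrolled by $(\tilde \mP, \partial_x \tilde \mP \reso \partial_x \tilde Y)$ and satisfies the same linear equation driven by $\YY(\vt^n\omega)$ with initial datum $w^{P}_n(\omega)$. This works because all the correction terms of the form $P_s \YY^\tau_n(\omega)$ appearing in \eqref{eqn:translation-on-enhanced=noise} are classical (smooth) functions of $(s,x)$ for $s>0$: they combine cleanly with the regular remainder in the paracontrolled ansatz and do not disturb the rough components on which the resonant products are defined. Uniqueness for the paracontrolled equation then concludes.
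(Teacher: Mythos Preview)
Your proposal is correct and follows essentially the same route as the paper: existence, uniqueness, linearity and continuity are read off from \cite[Theorem 6.15]{GubinelliPerkowski2017KPZ}, and the cocycle property is checked by writing $\varphi_{t+n}(\omega)u_0$ in the decomposed form, applying the translation rule \eqref{eqn:translation-on-enhanced=noise}, and defining exactly the multiplicatively corrected $\tilde w^P_s = e^{P_s(\text{smooth terms})}\, w^P_{n+s}$ that you introduce. Your additional remark that the $P_s\YY^\tau_n(\omega)$ corrections are smooth and therefore do not disturb the paracontrolled structure is precisely the point the paper leaves implicit when it says the formal calculation ``can be made rigorous''.
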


\begin{proof}
  Fix \(\omega \in \Omega\). The existence and uniqueness result \cite[Theorem 6.15]{GubinelliPerkowski2017KPZ}
  builds a solution to Equation~\eqref{eqn:rhe_st_white_noise} that depends continuously on the enhanced noise
\(\YY (\omega)\), and is continuous and linear with respect to initial
conditions $ u_{0} \in C(\TT^{d}) $ (in fact the theorem allows for $
u_{0} \in B^{- \beta}_{\infty, \infty} $ for $ \beta >0$ sufficiently small). The solution is
unique in a space of paracontrolled functions for which the product $
u \cdot(\xi - \infty) $ is defined in and appropriate pathwise sense.
  What we have to prove is that the solution map satisfies the cocycle property:
  \(\varphi_{n + t}(\omega) u_{0} = \varphi_{t}( \vt^{n} \omega) \circ
  \varphi_{n}(\omega) u_{0}\). From \cite[Theorem
4.5]{GubinelliPerkowski2017KPZ} (see the arguments that
  precede the theorem for a proof), the solution \(\varphi_{t+n}(\omega) u_{0}\) can be
represented as:
\begin{align*}
  \varphi_{t + n}(\omega) u_{0} = e^{Y_{t+ n}(\omega) + Y^{\TA}_{t+n}(\omega) +
  2Y^{\TB}_{t+ n}(\omega) } w^{P},
\end{align*}
where the terms inside the exponential are recalled in
Lemma~\ref{lem:translation-of-enhanced-noise}, and with \(w^{P}\) solving
\begin{align}\label{eqn:for-W-p}
  (\partial_{t} {-} \partial_{x}^{2}) w^{P} & = 4\Big[ (\partial_{t} {-}
    \partial_{x}^{2})(  Y^{\TD} {+} Y^{\TC}) + (\partial_{x}Y
      \partial_{x}Y^{\TB} {-} \partial_{x}Y \reso \partial_{x} Y^{\TB})\\
    & \qquad + \partial_{x} Y^{\TA} \partial_{x} Y^{\TB} + ( \partial_{x}Y^{\TB})^{2}
  \Big](\omega) w^{P} + 2 \partial_{x}(Y {+} Y^{\TA} {+} Y^{\TB})(\omega)
  \partial_{x} w^{P}, \nonumber\\
  w^{P}(0) & = e^{- Y_{0}(\omega) } u_{0}, \nonumber
\end{align}
in the paracontrolled sense of \cite[Theorem 6.15]{GubinelliPerkowski2017KPZ}.
Now one can use Equation~\eqref{eqn:translation-on-enhanced=noise} of
Lemma~\ref{lem:translation-of-enhanced-noise} to obtain:
\begin{align*}
  \varphi_{t + n}(\omega) = e^{Y_{t}(\vt^{n}\omega) +
  Y^{\TA}_{t}(\vt^{n} \omega) +  2 Y^{\TB}_{t}(\vt^{n} \omega)}
  \overline{w}^{P}_{t}(\omega),
\end{align*}
where 
\begin{align*}
  \overline{w}^{P}_{t}(\omega) = e^{P_{t} Y^{\TA}_{n}(\omega) + 2
  P_{t} Y^{\TB}_{n}(\omega)}w^{P}_{t + n}(\omega).
\end{align*}
In turn, \(\overline{w}^{P}(\omega)\) satisfies \(
\overline{w}^{P}_{0}(\omega) = e^{-Y_{0}(\vt^{n} \omega)} \varphi_{n}(\omega)
u_{0}\), and the proof of the cocycle property is complete if we show that \(
\overline{w}^{P} (\omega) \) is a solution to Equation~\eqref{eqn:for-W-p} with \(\omega\) replaced be
\(\vt^{n} \omega\) and initial condition \( e^{-Y_{0}(\vt^{n} \omega)}
\varphi_{n}(\omega) u_{0}\). If the enhanced noise $ \YY $ is a vector of
smooth functions the fact that $ \overline{w}^{P} $ solves the required equation is
immediate. Hence the claim follows by taking smooth approximations and using
the continuity of the solution theory in \cite[Theorem
6.15]{GubinelliPerkowski2017KPZ} with respect to the enhanced noise.

\end{proof}

The RDS \(\varphi\) introduced in the previous lemma falls into the framework
of Section~\ref{sec:syncrhonization-spdes}. To prove this, we follow the same
approach of
Proposition~\ref{prop:st_white_noise_satisfies_required_assumptions}, which
addresses the fractional noise case. First, we will construct the random kernel
\(K(\omega,t, x, y)\) for the solution map \(\varphi_{t}(\omega)\). Here the
key point is to use results from \cite{GubinelliPerkowski2017KPZ} to start
Equation~\eqref{eqn:rhe_st_white_noise} in \(u_0(x) = \delta_{y}(x)\). Then points \((i)-(iii)\) of Assumption~\ref{assu:properties_moments_of_solution_map}
follow by treating~\eqref{eqn:rhe_st_white_noise} as a pathwise perturbation of
the heat equation: these results have been already established, see e.g.
\cite{Cannizzaro2017Malliavin}. The most
challenging part of the proof is to prove the moments bounds of point
\((iv)\) of Assumption~\ref{assu:properties_moments_of_solution_map}. As in
Proposition~\ref{prop:kpz_fractional_noise_rds_construction} the proof of these
bounds relies on an appropriate decomposition \(\varphi_{t}(\omega)
u_{0} = e^{Z_{t}(\omega)} w_{t}(\omega)\), where \(Z_{t}\) is a functional of the noise,
together with a lower bound on \(w_{t}\) (first established in
\cite{PerkowskiRosati2018KPZonR}), which is the consequence of a comparison
principle.

\begin{proposition}\label{prop:st_white_noise_satisfies_required_assumptions}

  The RDS \(\varphi\) be defined as in Lemma
  \ref{lem:kpz_st_white_noise_solution_theory} satisfies
  Assumption \ref{assu:properties_moments_of_solution_map} for any \(\beta<
  \frac 12\). In particular, the results of Theorem
  \ref{thm:synchronization_for_kpz} apply.

\end{proposition}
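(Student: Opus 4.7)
The plan is to mirror the structure of Proposition~\ref{prop:kpz_fractional_noise_rds_construction}, replacing the smooth auxiliary process \(X\) by the enhanced noise \(\YY(\omega)\) and working throughout with the paracontrolled decomposition
\[ \varphi_{t}(\omega) u_{0} = e^{Y_{t} + Y^{\TA}_{t} + 2 Y^{\TB}_{t}}(\omega)\, w^{P}_{t}(\omega) \]
from the proof of Lemma~\ref{lem:kpz_st_white_noise_solution_theory}. The three structural facts I would rely on are: every component of \(\YY_{t}(\omega)\) lies in \(C^{\beta}(\TT)\) for any \(\beta<\tfrac{1}{2}\) and the norm \(\|\YY\|_{\mY_{\mathrm{kpz}}}\) on a compact time interval has moments of every order (the trees belong to a finite Wiener--It\^o chaos); the exponential prefactor is bounded above and below by quantities depending only on \(\|\YY(\omega)\|_{\mY_{\mathrm{kpz}}}\); and the paracontrolled equation~\eqref{eqn:for-W-p} for \(w^{P}\) is linear in and depends continuously on its initial datum in a suitable Besov space.

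For points (1) and (2) of Assumption~\ref{assu:properties_moments_of_solution_map} I would argue as in the fractional case. Following the analogue of Lemma~\ref{lem:dirac-delta-continuity}, \(\delta_{y}\in B^{-\gamma}_{1,\infty}\) depends H\"older-continuously on \(y\), and the solution theory of \cite[Theorem 6.15]{GubinelliPerkowski2017KPZ} allows Equation~\eqref{eqn:for-W-p} to be started there; hence \(K(\omega,t,x,y)=\varphi_{t}(\omega)(\delta_{y})(x)\) is a well-defined element of \(C(\TT\times\TT)\) for every \(t>0\) and is the desired fundamental solution by linearity. Strict positivity with two-sided bounds on \([S,T]\) I would obtain by approximation: for mollified noise \(\xi_{\ve}\) the classical strong maximum principle gives a kernel with two-sided bounds depending only on local bounds of the (mollified) enhanced noise, and these bounds are preserved when passing to the limit using the pathwise stability of \(\varphi\) with respect to \(\YY\).

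The smoothing bound (3) follows from the regularity of each component of \(\YY\) combined with the paracontrolled regularity estimate for \(w^{P}\); iterating the local-in-time bound on sub-intervals of length comparable to the local existence time produces
\[ \sup_{S\le t\le T}\|\varphi_{t}(\omega) u_{0}\|_{C^{\beta}} \le C(\beta,\omega,S,T)\|u_{0}\|_{\infty}, \]
with \(C\) depending polynomially on \(\|\YY(\omega)\|_{\mY_{\mathrm{kpz}}}\) over \([0,T]\). The first half of the moment estimate (4), namely \(\EE\log C(\beta,S,T)<\infty\), is then immediate from the fact that \(\|\YY\|_{\mY_{\mathrm{kpz}}}\) has all positive moments.

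The main obstacle is the second half of (4), namely controlling \(\EE\sup_{S\le t\le T} d_{H}(\varphi_{t}^{\pi}f,f)\), which by Lemma~\ref{lem:completeness_and_properties_positive_fcts} reduces to bounding \(\EE\sup_{S\le t\le T}\|\log\varphi_{t}(\omega)f\|_{\infty}\). The upper bound is immediate from (3). For the lower bound, which is the quantitative strong maximum principle flagged in the introduction, I would set \(v^{P}=\log w^{P}\) and derive a paracontrolled equation for \(v^{P}\) of the same shape as Equation~\eqref{eqn:for-v}, but with additional terms coming from the resonant products appearing in~\eqref{eqn:for-W-p}. After subtracting the explicit trees appearing in the decomposition, the remaining drift should admit a comparison-type estimate of the form
\[ v^{P}(t,x)\ge -\|\log f\|_{\infty} - C\bigl(\|\YY(\omega)\|_{\mY_{\mathrm{kpz}}}\bigr), \]
which together with the pointwise bound \(\|Y+Y^{\TA}+2Y^{\TB}\|_{\infty}\lesssim\|\YY\|_{\mY_{\mathrm{kpz}}}\) yields the required integrable control. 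The delicate point is that taking the logarithm destroys the paracontrolled structure, so the cleanest route is to perform the comparison on smooth approximations \(\xi_{\ve}\), where the classical maximum principle applies to \(\log w^{P,\ve}\) directly, and then pass to the limit using pathwise stability of the enhanced solution together with uniform (in \(\ve\)) moment bounds on \(\|\YY^{\ve}\|_{\mY_{\mathrm{kpz}}}\).
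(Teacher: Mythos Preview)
Your overall structure matches the paper's, and points (1), (3) and the first half of (4) are essentially what the paper does. Two small corrections: the smoothing constant $C(\omega,\beta,S,T)$ is not polynomial but \emph{exponential} in a power of $\|\YY(\omega)\|_{\mY_{\mathrm{kpz}}}$ (this is what the Gronwall argument for the linear paracontrolled equation gives); since only $\log C$ must be integrable this is harmless. For point (2) the paper does not argue by approximation but invokes directly a pathwise strong maximum principle for the singular SHE (\cite{Cannizzaro2017Malliavin}; probabilistically \cite{Mueller1991SupportHeatEquationWithNoise}). Your approximation route is more fragile than it sounds: the classical lower bound $\gamma^{\ve}$ on the mollified kernel depends on $L^\infty$ norms of the mollified coefficients, which diverge as $\ve\to 0$, so strict positivity in the limit is not automatic.

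The genuine gap is the lower bound on $\log\varphi_t(\omega)f$. You correctly locate the difficulty but the mechanism you propose (``subtract the explicit trees, then compare'' or ``compare at the smooth level and pass to the limit'') does not work as stated: the zeroth-order drift $c(\YY)$ in the equation for $h^{P}=\log w^{P}$ has strictly negative regularity and no sign, and no subtraction of finitely many trees leaves a pointwise nonnegative remainder; equivalently, any smooth-level comparison produces a constant depending on $\|c^{\ve}\|_{\infty}$, which blows up. The paper's device (taken from \cite[Lemma~3.10]{PerkowskiRosati2018KPZonR}) is a duality: introduce $\tilde{h}^{P}$ solving the \emph{same} nonlinear equation as $h^{P}$ but with $c(\YY)$ replaced by $-c(\YY)$ and initial datum $-h^{P}(0)$. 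Adding the two equations, the $c$-terms cancel and only the nonnegative squares $(\partial_x h^{P})^{2}+(\partial_x\tilde h^{P})^{2}$ survive, so comparison yields $h^{P}+\tilde h^{P}\ge 0$, i.e.\ $h^{P}\ge -\log\|\tilde w^{P}\|_\infty$ with $\tilde w^{P}=e^{\tilde h^{P}}$. The crucial point is that $\tilde w^{P}$ solves a \emph{linear} paracontrolled equation of exactly the same type as \eqref{eqn:for-W-p} (only the sign of $c$ is flipped), to which the existing theory applies and gives $\|\tilde w^{P}\|_\infty\le e^{\overline{C}(1+\|\YY\|_{\mY_{\mathrm{kpz}}})^{\overline q}}$. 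This converts the lower-bound problem for the nonlinear object into an upper-bound problem for a linear one; without this reduction your approximation scheme has no uniform-in-$\ve$ estimate to pass to the limit.
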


\begin{proof}

  We will check, one by one, the requirements of
Assumption~\ref{assu:properties_moments_of_solution_map}. Since the first three
points are required to hold for every realization of the noise, let us fix \(\omega \in \Omega_{\mathrm{kpz}}\).  

  \textit{(i).} We start by checking the first property of
  Assumption~\ref{assu:properties_moments_of_solution_map}. We can define the kernel by \(K(\omega, t, x ,
  y) = \varphi_{t}(\omega)(\delta_{y})(x)\), where \(\delta_{y}\) indicates a
  Dirac \(\delta\) centered at \(y\). Here
  \(\varphi_{t}(\omega)(\delta_{y})\) is the solution to
  \eqref{eqn:rhe_st_white_noise} with \(u_{0}= \delta_{y}\). This solution
  exists in view of
  \cite[Theorem 6.15]{GubinelliPerkowski2017KPZ}: in fact, this result shows
  that for any \(0 < \beta, \zeta < \frac{1}{2},\) and any \(p \in [1, \infty]\)
  the solution map \(\varphi(\omega)\) can be extended to a map
  \[\varphi(\omega) \in C_{\mathrm{loc}}( (0, \infty); \mL( B^{- \gamma}_{p, \infty} ;
B^{\beta}_{p , \infty})),\]
where we used that, in the language of \cite{GubinelliPerkowski2017KPZ}, the space
\(\mD_{\mathrm{rhe}}^{\mathrm{exp}, \delta}\) of paracontrolled distributions,
in which the solution lives, embeds in \(C_{\mathrm{loc}}( (0, \infty); B^{\beta}_{p, \infty})\), for suitable values of \(\delta\)
as described in the quoted theorem. Near \(t=0\) one expects that
\(\| \varphi_{t} (\omega) u_{0} \|_{B^{\beta}_{p, \infty}}\) blows up, if
\(u_{0} \in B^{- \zeta}_{p. \infty}\). The exact speed of this blow-up is
provided as well in the theorem, but since we are not interested in quantifying the
blow-up, we can exploit the result we wrote to deduce the apparently stronger:
\begin{equation}\label{eqn:regularisation-rhe-1}
  \varphi (\omega) \in C_{\mathrm{loc}}( (0, \infty); \mL( B^{- \gamma}_{p, \infty};
C^{\beta})).\end{equation} 
This follows by Besov embedding: for \(p \leq q\): \(B^{\alpha}_{p, \infty}
(\TT^{d}) \subseteq B^{\alpha - d( \frac 1 p - \frac 1 q)}_{q,
\infty}(\TT^{d})\). Assuming without loss of generality that \(\beta,
\zeta> \frac 1 4\), uniformly over \(0 < S \leq t \leq T < \infty\) one can
bound:
\begin{align*}
  \| \varphi_{t} u_{0}\|_{C^{\beta}} \lesssim \| \varphi_{S/2}
  u_{0}\|_{C^{- \zeta}} \lesssim \| \varphi_{S/2} u_{0} \|_{B^{
  \beta}_{2, \infty}} \lesssim \| \varphi_{S/4} u_{0} \|_{B^{-
  \zeta}_{2, \infty}} \lesssim \| \varphi_{S/4} u_{0} \|_{B^{\beta}_{1,
  \infty}} \lesssim \| u_{0} \|_{B^{- \zeta}_{p, \infty}}.
\end{align*}
So overall we obtain \eqref{eqn:regularisation-rhe-1}, and in particular:
  \begin{equation}\label{eqn:regularisation-rhe-2} 
    \begin{aligned}
      \sup_{S \leq t \leq T} \| \varphi_{t}(\omega) u_{0} \|_{C^{\beta}}
      \leq C( \omega, \beta, \zeta, p, S , T) \| u_{0}
      \|_{B^{-\zeta}_{p , \infty}}, \ \ \text{for any} \ \ 0 < S < T
      < \infty.
    \end{aligned}
  \end{equation}
  Now since \( \{\delta_{y}\}_{y \in \TT} \subseteq B^{- \zeta}_{1}\) for any
  \(\zeta>0\), as proven in Lemma~\ref{lem:dirac-delta-continuity}, the kernel
  \(K(\omega, t, x, y)\) is well-defined. The continuity in \(t, x\) follows
  from the previous estimates, while the continuity in \(y\) follows from
  \eqref{eqn:regularisation-rhe-2} together with
  Lemma~\ref{lem:dirac-delta-continuity}.

  \textit{(ii).} We can pass to the second property of
  Assumption~\ref{assu:properties_moments_of_solution_map}. The upper bound \(\delta( \omega, S , T)\) is a 
  consequence of the continuity of the kernel \(K\). The lower bound
  \(\gamma(\omega, S, T)\) is instead a consequence of a strong maximum
  principle which, implies that \(K(\omega, t, x, y)>0, \forall t >0, x,y \in
  \TT\). In this pathwise setting, the strong maximum principle is proven in
  \cite[Theorem 5.1]{Cannizzaro2017Malliavin}
  (it was previously established in
  \cite{Mueller1991SupportHeatEquationWithNoise} with probabilistic techniques).

\textit{(iii).} The third property is a consequence of
Equation~\eqref{eqn:regularisation-rhe-2}, by defining \(C(\omega, \beta, S,
T):= C(\omega, \beta, \frac 1 4, \infty , S, T)\), so we are left with only the last
property to check.

\textit{(iv).} We start with the fact that
\[ \EE \log{ C( \beta, S, T)} < \infty. \] 
To see that this is the case, observe that there exists some deterministic
\(A( \beta,  S, T) , q \geq 1\) such that:
\begin{equation}\label{eqn:exponential-bound}
  \sup_{t \in [S, T]} \| \varphi_{t}(\omega) f \|_{ \beta} \leq
    e^{A( \beta, S, T )( 1 + \| \YY(\omega)
  \|_{\mY_{\mathrm{kpz}}} )^{q}} \| f \|_{C^{- \frac 1 4}},\end{equation}
  that is we can choose \(C(\omega, \beta,S, T)\):
\[ C(\omega, \beta, S, T) = e^{A( \beta, S, T )( 1 + \| \YY(\omega)
\|_{\mY_{\mathrm{kpz}}} )^{q}}.\]
Inequality~\eqref{eqn:exponential-bound} is implicit in the proof of \cite[Theorem
  6.15]{GubinelliPerkowski2017KPZ}, since the proof relies on a Picard
  iteration and a Gronwall argument. The bound can be found explicitly in 
  \cite[Theorem 5.5 and Section 5.2]{PerkowskiRosati2018KPZonR}: here the
  equation in set on the entire line \(\RR\), which is a more general setting,
  since one can always extend the noise periodically.
  Thus we have \(\EE \log{  C ( \beta, S , T)} \lesssim_{\beta, S, T} 1+ \EE
  \big[ \| \YY  \|_{\mY_{\mathrm{kpz}}}^{q} \big]\), so that the result is
  proven if one shows that for any \(q \geq 0\): \(\EE \| \YY
  \|_{\mY_{\mathrm{kpz}}}^{q}< \infty\), which is the content of
  \cite[Theorem 9.3]{GubinelliPerkowski2017KPZ}.

  We then pass to the second bound in $ (iv) $. Since by the triangle inequality the bound
  does not depend on the choice of \(f\), set \(f = 1\). It is thus enough to
  prove that:
  \[ 
    \EE \sup_{S \leq t \leq T} \| \log{ (\varphi_{t}( \omega) 1 )}  \|_{ \infty}
    < \infty.
  \]
  We proceed as in the proof of
  Proposition~\ref{prop:st_white_noise_satisfies_required_assumptions}. On one
  side one has the upper bound:
  \begin{align*}
    \log{( \varphi_{t}(\omega) 1)} \leq \log{ \| \varphi_{t}(\omega) 1
    \|_{\infty}} \leq \log{C(\omega, \beta, S, T)}, 
  \end{align*}
  which is integrable by the arguments we just presented. As for the lower bound, the approach of
  Proposition~\ref{prop:st_white_noise_satisfies_required_assumptions} has to
  be adapted to the present singular setting. One way to perform a similar
  calculation has been already developed \cite[Lemma 3.10]{PerkowskiRosati2018KPZonR}. We sketch again the
  argument here for clarity, assuming that the elements of \(\YY(\omega)\) are
  smooth. We will eventually refer to the appropriate wellposedness results to complete the proof. 
  Recall that \(\varphi_{t}(\omega) u_{0} = e^{Y_{t} (\omega) +
  Y^{\TA}_{t}(\omega) + 2 Y^{\TB}_{t}(\omega)} w^{P}_{t},\) where
  \(w^{P}\) solves Equation~\eqref{eqn:for-W-p}. Then define:
   \begin{align*}
    b(\YY) & = 2(\partial_{x} Y + \partial_{x} Y^{\TA} + \partial_{x}
    Y^{\TB})\\
    c(\YY) & = 4[(\partial_{t} {-} \partial_{x}^{2})(Y^{\TC} + Y^{\TD}) + (
	\partial_{x} Y \partial_{x} Y^{\TB} {-} \partial_{x} Y \reso
    \partial_{x} Y^{\TB}) + \partial_{x}Y^{\TA} \partial_{x} Y^{\TB} +
  (\partial_{x} Y^{\TB})^{2}]. 
  \end{align*}
  Assuming that \(b(\YY), c(\YY)\) are smooth one sees that \(h^{P} =
  \log{w^{P}}\) solves:
  \begin{align*}
    (\partial_{t} {-} \partial_{x}^{2}) h^{P} = b(\YY) \partial_{x}
    h^{P} + c(\YY) + (\partial_{x} h^{P})^{2}, \qquad h^{P}(0) =
    \log{w^{P}(0)}. 
  \end{align*}
  By comparison, \(h^{P} \geq - \widetilde{h}^{P}\), with the latter solving:
  \begin{align*}
    (\partial_{t} {-} \partial_{x}^{2}) \widetilde{h}^{P} = b(\YY) \partial_{x}
    \widetilde{h}^{P} - c(\YY) + (\partial_{x} \widetilde{h}^{P})^{2}, \qquad
    h^{P}(0) =- \log{w^{P}(0)}.
  \end{align*}
  In particular
  \[ h^{P} \geq - \log{ \widetilde{w}^{P}} \geq - \log{ \|
  \widetilde{w}^{P} \|_{\infty}},\] 
  where \( \widetilde{w}^{P}\) solves:
  \begin{align*}
    (\partial_{t} {-} \partial_{x}^{2}) \widetilde{w}^{P} = b(\YY)
    \partial_{x} \widetilde{w}^{P} - c(\YY) \widetilde{w}^{P}, \qquad
    \widetilde{w}^{P}(0) = \frac{1}{w^{P}(0)}. 
  \end{align*}
  Note that with respect to the equation in the proof of
  \cite[Lemma 3.10]{PerkowskiRosati2018KPZonR} some factors \(2\) are out of
  place: this is because here we consider the operator \(
  \partial_{x}^{2}\) instead of \( \frac{1}{2} \partial_{x}^{2}\). 
  The equation for \( \widetilde{w}^{P}\) is almost identical to the one for
  \(w^{P}\) and admits a paracontrolled solution
  as an application of \cite[Proposition 5.6]{PerkowskiRosati2018KPZonR}. In
  particular the quoted result implies that:
  \[ \sup_{S \leq t \leq T} \| \widetilde{w}_{t} \|_{\infty} \leq e^{\overline{C}(S,
  T)(1 + \| \YY \|_{\mY_{\mathrm{kpz}}})^{ \overline{q} }}, \]
  for some \( \overline{C}, \overline{q} \geq 1\). Since \( \| Y \|_{\infty}+
  \| Y^{\TA} \|_{\infty} + \| Y^{\TB} \|_{\infty} \lesssim \| \YY
\|_{\mY_{\mathrm{kpz}}},\) one has overall that:
\[ \log{ \varphi_{t}(\omega) 1} \gtrsim_{S, T} - 1 - \| \YY \|_{
\mY_{\mathrm{kpz}}}^{ \overline{q}}.\]
Together with the previous results and the moment bound on \(\EE \| \YY
\|^{ \overline{q}}\) we already recalled, this proves that:
\[ \EE \sup_{S \leq t \leq T} d_{H}( \varphi_{t} \cdot f, f) < \infty.\]
Hence the proof is complete.

\end{proof}

\begin{remark}\label{rem:control-problem}
  As an alternative to our proof of a lower bound to \(h_{t} = \log{
  \varphi_{t} (\omega) u_{0}}\), it
  seems possible to use an optimal control representation of
  \(h\), see \cite[Theorem 7.13]{GubinelliPerkowski2017KPZ}. Both
  approaches rely crucially on the
  pathwise solution theory for the KPZ equation.
\end{remark}




\begin{remark}

  In the previous proposition we have proven that we can apply Theorem
  \ref{thm:synchronization_for_kpz}. The latter guarantees synchronization up to
  subtracting time-dependent constants \(c (\omega , t)\). In fact it seems possible to choose
  \(c(\omega, t) \equiv \overline{c}(\omega)\) for a time-independent \(
  \overline{c} ( \omega)\).  For fractional noise we could show this in
  Remark~\ref{rem:constants_for_fractional_noise}, but in the argument we made use of the spatial
  smoothness of the noise to write an ODE for the constant \(c(\omega, t)\):
  Equation \eqref{eqn:constants_fractional_noise}.
 
  It seems reasonable to expect that the approach of Remark~\ref{rem:constants_for_fractional_noise} can be
  lifted to the space-time white noise setting by defining the product which
  appears in the ODE for example in a paracontrolled way. To complete the
  argument one would need to control the paracontrolled, and not only the
  H\"older norms in the convergences of Theorem
  \ref{thm:synchronization_for_kpz}. This
  appears feasible, but falls beyond the aims of the present paper. 

\end{remark}

\section{Mixing of Gaussian fields}\label{sec:mixing-of-gaussian-fields}

Let us state a general criterion which ensures that a possibly
infinite-dimensional Gaussian field is mixing (and hence ergodic). This is a
 generalization of a classical result for one-dimensional processes, cf.
\cite[Chapter 14]{SinaiCornfeldFomin1982ErgodicTheory}. We indicate with
\(\mathbf{B}^{*}\) the dual of a Banach space \(\mathbf{B}\) and write \(\langle
\cdot, \cdot \rangle\) for the dual pairing.

  \begin{proposition}\label{prop:mixing_conditions}

    Let \(\mathbf{B}\) be a separable Banach space. Let \(\mu\) be a Gaussian
    measure on \(( \mathbf{B}, \mB( \mathbf{B}))\) and \(\vt \colon \NN_{0}
    \times \mathbf{B} \to \mathbf{B}\) a dynamical system which leaves \(\mu\)
    invariant. Let \(\xi\) be any random variable with values in
\(\mathbf{B}\) and law \(\mu\). The condition
    \begin{equation}\label{eqn:condition-vanishing-covariances}
      \lim_{ n \to \infty} \mathrm{Cov}( \langle \xi, \varphi \rangle, \langle
      \vt^{n} \xi, \varphi^{\prime} \rangle) = 0, \qquad \forall \varphi,
      \varphi^{\prime} \in \mathbf{B}^{*} 
    \end{equation}
    implies that the system is mixing, that is for all \(A, B \in
    \mB(\mathbf{B})\):
    \[ \lim_{n \to \infty} \mu(A \cap \vt^{{-}n}B) = \mu(A) \mu(B). \] 
If in addition \(\mu\) satisfies that
\[ \EE [\| \xi \|_{\mathbf{B}}^{2} ] < \infty,\]
and \(S \subseteq \mathbf{B}^*\) is a dense subset then
\begin{equation}\label{eqn:vanishing-covariances-simplified}
 \lim_{n \to \infty} \mathrm{Cov}( \langle \xi, \varphi \rangle, \langle
\vt^{n} \xi, \varphi^{\prime}\rangle) = 0, \qquad \forall \varphi, \varphi^{\prime} \in
S 
\end{equation}
implies condition~\eqref{eqn:condition-vanishing-covariances}.
  \end{proposition}

  \begin{proof}

  First, we reduce ourselves to the finite-dimensional case. Indeed, note that
  the sequence \( (\xi, \vt^{n} \xi)\) is tight in \(\mathbf{B} \times
  \mathbf{B}\), because \(\vt\) leaves \(\mu\) invariant. Furthermore,
  tightness
  implies that the sequence is flatly concentrated (cf. \cite[Definition
  2.1]{Acosta1970}), that is for every \(\ve>0\) there exists a
  finite-dimensional linear space \(S^{\ve} \subseteq \mathbf{B} \times
  \mathbf{B}\) such that:
  \[ \PP\big( (\xi, \vt^{n} \xi) \in S^{\ve}\big) \geq 1 {-} \ve.\]
  Hence, it is sufficient to check the mixing property for \(A, B \in
  \mB(S^{\ve})\).

  This means that there exists an \(n \in \NN\) and \(\varphi_{i} \in
  \mathbf{B}^{*}\) for \(i = 1, \dots, n\) such that we have to check the mixing
  property for the vector: 
  \[ 
    ( (\langle \xi, \varphi_{i} \rangle)_{i = 1, \dots, n} , \ \ (\langle \vt^{n}
    \xi,  \varphi_{i} \rangle)_{i = 1, \dots, n}). 
  \] 
  In this setting and in view of our assumptions the result on the mixing
property follows from \cite[Theorem 2.3]{FuchsStelzer2013}. 

Finally, we have to prove that if \(\EE \| \xi \|_{\mathbf{B}}^{2} <
\infty\), then it suffices to check
condition~\eqref{eqn:vanishing-covariances-simplified} for \(\varphi,
\varphi^{\prime} \in S\). Indeed take any \(\psi, \psi^{\prime} \in
\mathbf{B}^*\). Since \(S\) is dense, consider for any \(\ve \in (0,1)\) a pair \(\varphi_{\ve},
\varphi^{\prime}_{\ve} \in S\) such that
\[ \| \psi - \varphi_{\ve} \|_{\mathbf{B}^*} + \| \psi^{\prime} -
\varphi^{\prime}_{\ve} \|_{\mathbf{B}^*} \leqslant \ve. \]
Then define \(M>0\) by 
\[ M = \sup_{\ve \in (0,1)} \left(  \| \varphi_{\ve} \|_{\mathbf{B}^*} + \|
\varphi^{\prime}_{\ve} \|_{\mathbf{B}^*} \right) < \infty. \] 
We can bound, for every \(n \in \NN\):
\begin{align*}
| \mathrm{Cov}(\langle \xi, \psi \rangle, \langle \vt^{n} \xi, \psi^{\prime}
\rangle) - & \mathrm{Cov}( \langle \xi, \varphi_{\ve} \rangle, \langle \vt^{n} \xi,
\varphi^{\prime}_{\ve} \rangle)| \\
& \leqslant \EE |\langle \xi, \psi - \varphi_{\ve} \rangle
\cdot \langle \vt^{n} \xi, \psi^{\prime} \rangle| + \EE | \langle \xi,
\varphi_{\ve} \rangle, \langle \vt^{n} \xi,
\psi^{\prime}- \varphi^{\prime}_{\ve} \rangle)|  \\
& \leqslant \ve \| \psi^{\prime} \|_{\mathbf{B}^*} \EE \| \xi
\|_{\mathbf{B}} \| \vt^{n} \xi \|_{\mathbf{B}} + \ve \| \varphi_{\ve}
\|_{\mathbf{B}^*} \EE \| \xi \|_{\mathbf{B}} \| \vt^{n} \xi
\|_{\mathbf{B}} \\
& \leqslant \ve \cdot 2  M \cdot \EE \| \xi \|_{\mathbf{B}}^{2}.
\end{align*}
In particular, since by assumption \(\varphi_{\ve}, \varphi_{\ve}^{\prime}\) satisfy
condition~\eqref{eqn:vanishing-covariances-simplified}, we have proven that:
\[ \limsup_{n \to \infty} | \mathrm{Cov}(\langle \xi, \psi \rangle, \langle
\vt^{n} \xi, \psi^{\prime} \rangle)| \leqslant \ve \cdot 2 M \cdot \EE \| \xi
\|_{\mathbf{B}}^{2}.\]  
As \(\ve\) is arbitrary this proves that
condition~\eqref{eqn:condition-vanishing-covariances} is true.
\end{proof}

\appendix\addtocontents{toc}{\protect\setcounter{tocdepth}{-1}}

\section{} 

\begin{lemma}\label{lem:schauder-regularization}
  Let \(P_{t}\) be the heat semigroup. One can estimate, for \(\alpha \in
  \RR, \beta \in [0,2), p \in [1, \infty]\) and any \(T>0\):
  \[ \sup_{0 \leq t \leq T } t^{\frac{\beta}{2} } \| P_{t} f
  \|_{B^{\alpha+ \beta}_{p, \infty}(\TT^{d})} \lesssim \| f
\|_{B^{\alpha}_{p, \infty}(\TT^{d})}.\]
In addition, if one chooses parameters $ \alpha, \gamma \in \RR $ such that for
some \(\beta \in [1, 2)\) and for $ \zeta : = \gamma \wedge \alpha + \beta $ it
holds that
\begin{align*}
\gamma + \zeta -1 >0,
\end{align*}
then, for any \(b \in L^{\infty}( [0,T] ;  B^{\gamma}_{\infty, \infty}(\TT^{d};
  \RR^{d})), c \in L^{\infty}( [0, T];  B^{\gamma}_{ \infty,
\infty}(\TT^{d}))\) and $ w_{0} \in B^{\alpha}_{p, \infty}(\TT^{d}) $, for any
$ p \in [1, \infty ) $, there exists a unique mild solution \(w\) to: 
  \[( \partial_{t} - \Delta) w(t,x) = b(t,x) \cdot \nabla w (t,x) +
  c(t,x)w(t,x), \qquad w(0,x) = w_0(x), \]
meaning that
\[ w(t,x) = P_{t} w_{0}(x) + \int_{0}^{t} P_{t-s} [b(s) \cdot \nabla w (s) +
c(s)w(s)](x) \ud s. \] 
  Moreover, there exists a \(q \geq 0\) and, for any time horizon $ T>0 $, a
constant \(C(T) > 0\) such that:
  \[ 
    \sup_{0 \leq t \leq T} t^{\frac{\beta}{2} }  \| w_{t} \|_{B^{\zeta}_{p, \infty}} \lesssim \|
    w_{0} \|_{B^{\alpha}_{p, \infty}} e^{C(T) ( 1+ \| b \|_{L^{\infty}( [0,T] ;
	B^{\gamma}_{\infty, \infty})} + \| c \|_{L^{\infty}( [0,T] ;
    B^{\gamma}_{\infty, \infty})})^{q}}.
  \]
\end{lemma}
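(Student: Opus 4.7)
The plan is to prove the two parts separately: the heat semigroup estimate first, then existence, uniqueness and the quantitative bound for the perturbed equation via Picard iteration combined with paraproduct estimates.

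For the Schauder bound on \(P_{t}\) I would start from the Bernstein-type estimate \(\|P_{t} \Delta_{j} f\|_{L^{p}} \lesssim e^{- c 2^{2j} t} \|\Delta_{j} f\|_{L^{p}}\) for \(j \geq 0\), with the low-frequency block handled trivially. Multiplying by \(2^{j( \alpha + \beta)}\) and using that \(\sup_{x \geq 0} x^{\beta/2} e^{- cx} < \infty\) (since \(\beta \geq 0\)) yields
\[
t^{\beta/2} 2^{j( \alpha + \beta)} \|P_{t} \Delta_{j} f\|_{L^{p}} \lesssim 2^{j \alpha} \|\Delta_{j} f\|_{L^{p}}
\]
uniformly in \(j\) and in \(t \in [0, T]\); taking the \(\ell^{\infty}\)-supremum in \(j\) yields the claimed estimate.

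For the PDE part, rewrite the equation in the mild formulation
\[
w_{t} = P_{t} w_{0} + \int_{0}^{t} P_{t {-} s} \bigl( b(s) \cdot \nabla w(s) + c(s) w(s) \bigr) \ud s
\]
and work in the Banach space \(X_{T} = \{ w \in C([0, T]; B^{\alpha}_{p, \infty}) \colon \sup_{0 < t \leq T} t^{\beta/2} \|w(t)\|_{B^{\zeta}_{p, \infty}} < \infty\}\) with the natural norm. By Bony's decomposition and the standard Besov paraproduct/resonant estimates, the hypothesis \(\gamma + \zeta - 1 > 0\) (together with \(\beta \geq 1\), which guarantees that \(\nabla w \in B^{\zeta {-} 1}_{p, \infty}\) makes sense) gives
\[
\|b(s) \cdot \nabla w(s)\|_{B^{\gamma \wedge (\zeta {-} 1)}_{p, \infty}} \lesssim \|b(s)\|_{B^{\gamma}_{\infty, \infty}} \|w(s)\|_{B^{\zeta}_{p, \infty}}, \quad
\|c(s) w(s)\|_{B^{\gamma \wedge \zeta}_{p, \infty}} \lesssim \|c(s)\|_{B^{\gamma}_{\infty, \infty}} \|w(s)\|_{B^{\zeta}_{p, \infty}}.
\]
The gain in regularity required to return to \(B^{\zeta}_{p, \infty}\) via the Schauder bound of the first part is at most \(\beta\) for the potential term (an easy calculation using \(\zeta = \gamma \wedge \alpha + \beta\)) and at most \(1\) for the drift term (since \(\zeta - (\zeta {-} 1) = 1\)); in particular both gains are strictly less than \(2\), so that the first part applies with singularity \((t {-} s)^{- \beta^{\prime}/2}\), \(\beta^{\prime} < 2\), which is integrable in \(s\).

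Combining the two ingredients, a standard Picard iteration on a small interval \([0, T_{\ast}]\) (with \(T_{\ast}\) depending on \(\|b\|_{L^{\infty} B^{\gamma}} + \|c\|_{L^{\infty} B^{\gamma}}\) so that the iteration is a strict contraction) produces local existence and uniqueness; iterating up to time \(T\) gives the global solution. For the quantitative exponential bound, set \(M = \|b\|_{L^{\infty}([0, T]; B^{\gamma}_{\infty, \infty})} + \|c\|_{L^{\infty}([0, T]; B^{\gamma}_{\infty, \infty})}\) and write
\[
t^{\beta/2} \|w(t)\|_{B^{\zeta}_{p, \infty}} \lesssim \|w_{0}\|_{B^{\alpha}_{p, \infty}} + M \int_{0}^{t} (t {-} s)^{- \beta^{\prime}/2} s^{- \beta/2} \cdot s^{\beta/2} \|w(s)\|_{B^{\zeta}_{p, \infty}} \ud s.
\]
A singular Gronwall inequality then closes the estimate and, after iteration, produces an upper bound of the form \(\|w_{0}\|_{B^{\alpha}_{p, \infty}} \exp( C(T)(1 + M)^{q})\) for some \(q \geq 1\), as required. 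The main obstacle is the bookkeeping in the paraproduct estimates: one must verify that the exponent to be recovered by Schauder is compatible with the condition \(\beta < 2\), and that the resonant part is well-defined, which is exactly what the hypothesis \(\gamma + \zeta - 1 > 0\) secures; once this is in place the Picard/Gronwall machinery is routine.
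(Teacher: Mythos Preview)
Your proposal is correct and follows essentially the same route as the paper: a Littlewood--Paley proof of the Schauder smoothing, then Bony/product estimates combined with a Picard contraction on the weighted space \(\sup_{t}t^{\beta/2}\|\cdot\|_{B^{\zeta}_{p,\infty}}\) and a Gronwall argument for the exponential a-priori bound. The only cosmetic difference is that the paper uses the single exponent \((t{-}s)^{-\beta/2}\) for both the drift and potential terms (which is legitimate since \(\beta\geq 1\) covers the gain~\(1\) needed for \(b\cdot\nabla w\)), whereas you track a separate \(\beta'\); this changes nothing in substance.
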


\begin{proof}
  The estimate regarding the heat kernel is classical. For a reference from the
  field of singular SPDEs see \cite[Lemma
  A.7]{GubinelliImkellerPerkowski2015}. Let us pass to the PDE. Here consider
  any \(w\) such that \(M := \sup_{0 \leq t \leq T} t^{ \frac{\beta}{2} } \| w
  \|_{B^{\zeta}_{p, \infty}} < \infty\), and let \(N := \sup_{0 \leq t \leq T} \Big\{ \| b_{t}
    \|_{B^{\gamma}_{\infty, \infty}(\TT^{d}; \RR^{d})} + \| c_{t}
\|_{B^{\gamma}_{\infty, \infty}(\TT^{d})} \Big\}\). Then consider:
\[ \mI(w)_{t} = P_{t} w_{0} + \int_{0}^{t} P_{t-s} \Big[ b_{s} \cdot \nabla
w_{s} + c_{s} w_{s} \Big] \ud s. \] 
It follows from the smoothing effect of the heat kernel that:
\begin{align*}
  \sup_{0 \leq t \leq T} t^{\frac{\beta}{2} }   \| \mI(w)_{t} \|_{B^{\zeta}_{p,
  \infty}} & \lesssim \| w_{0} \|_{B^{\alpha}_{p, \infty}} + \sup_{0 \leq t
  \leq T} t^{\frac{\beta}{2} } 
  \int_{0}^{t} (t {-} s)^{ - \frac{\beta}{2} } \Big( \| b_{s} \cdot \nabla
    w_{s} \|_{B^{(\zeta-1) \wedge \gamma}_{p, \infty}} + \| c_{s} w_{s}
    \|_{B^{\zeta \wedge \gamma}_{p, \infty}}\Big)\ \ud s \\
\end{align*}
Now from our condition on the coefficient and estimates on products of
distributions (see \cite[Theorem 2.82 and
2.85]{BahouriCheminDanchin2011FourierAndNonLinPDEs}) the latter term can in
turn be bounded by:
\begin{align*}
      \sup_{0 \leq t \leq T} t^{\frac{\beta}{2} }   \| \mI(w)_{t} \|_{B^{\zeta}_{p,
  \infty}} & \lesssim \| w_{0} \|_{B^{\alpha}_{p, \infty}} + M N \sup_{0 \leq t \leq
    T} t^{\frac{\beta}{ 2} } \int_{0}^{t} (t {-} s)^{ - \frac{\beta}{2} } s^{- \frac{\beta}{2} } \ud s \\
    & \lesssim \| w_{0} \|_{B^{\alpha}_{p, \infty}} + M N T^{1-
    \frac{\beta}{2}}. 
\end{align*}
It follows that for small \(T>0\) the map \(\mI\) is a contraction providing
the existence of solutions for small times. By linearity and a Gronwall-type
argument, this estimate also provides the required a-priori bound.
\end{proof}

\begin{lemma}\label{lem:dirac-delta-continuity}
  For any \(\gamma>0\), the inclusion \(\{\delta_{y}\}_{y \in \TT^{d}}
  \subseteq B^{- \gamma}_{1, \infty}\) holds. Moreover, there exists an
  \(L(\gamma)>0\) such that:
  \[ \| \delta_{x} - \delta_{y} \|_{B^{- \gamma}_{1, \infty}} \leq L(\gamma) |x - y|^{\gamma}. \] 
\end{lemma}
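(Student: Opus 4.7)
The plan is to unpack the Besov norm
\[ \| f \|_{B^{-\gamma}_{1,\infty}} = \sup_{j \geq -1} 2^{-\gamma j} \| \Delta_j f \|_{L^1}, \]
and to control $\|\Delta_j \delta_y\|_{L^1}$ and $\|\Delta_j(\delta_x - \delta_y)\|_{L^1}$ by elementary scaling of the Littlewood--Paley kernels. Writing $\check{\varrho}_j = \mathcal{F}^{-1}_{\TT^d} \varrho_j$ one has $\Delta_j \delta_y(z) = \check{\varrho}_j(z - y)$, and by the dyadic scaling $\varrho_j(\cdot) = \varrho(2^{-j}\cdot)$ (for $j \geq 0$, plus the trivial low-frequency block at $j=-1$) one obtains
\[ \| \Delta_j \delta_y \|_{L^1(\TT^d)} = \| \check{\varrho}_j \|_{L^1} = \| \check{\varrho} \|_{L^1} =: C_0, \]
uniformly in $j \geq -1$ and $y \in \TT^d$. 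This immediately gives the inclusion, since
\[ \| \delta_y \|_{B^{-\gamma}_{1,\infty}} \leq C_0 \sup_{j \geq -1} 2^{-\gamma j} < \infty \]
for every $\gamma > 0$.

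For the Hölder bound, I would estimate $\Delta_j(\delta_x - \delta_y)$ in two ways. Trivially,
\[ \| \Delta_j(\delta_x - \delta_y) \|_{L^1} \leq 2 C_0. \]
On the other hand, using the mean value theorem and translation invariance,
\[ \| \Delta_j(\delta_x - \delta_y) \|_{L^1} = \| \check{\varrho}_j(\cdot - x) - \check{\varrho}_j(\cdot - y) \|_{L^1} \leq |x-y| \, \| \nabla \check{\varrho}_j \|_{L^1}, \]
and the Bernstein-type scaling of $\check{\varrho}_j$ gives $\| \nabla \check{\varrho}_j \|_{L^1} \lesssim 2^j$, uniformly in $j \geq -1$. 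Combining both bounds yields
\[ \| \Delta_j(\delta_x - \delta_y) \|_{L^1} \lesssim \min(1, \, 2^j |x-y|) \lesssim (2^j |x-y|)^{\gamma \wedge 1}, \]
the last inequality since $\min(1,z) \leq z^\theta$ for every $\theta \in [0,1]$, $z \geq 0$. Multiplying by $2^{-\gamma j}$ and taking the supremum in $j$ gives
\[ \| \delta_x - \delta_y \|_{B^{-\gamma}_{1,\infty}} \lesssim |x-y|^{\gamma \wedge 1} \sup_{j \geq -1} 2^{(\gamma \wedge 1 - \gamma) j} \lesssim_\gamma |x-y|^{\gamma \wedge 1}. \]
Since in the rest of the paper only arbitrarily small $\gamma > 0$ is required, it suffices to take $\gamma \in (0,1)$, in which case $\gamma \wedge 1 = \gamma$ and the stated bound with $L = L(\gamma)$ follows. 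There is no serious obstacle: the only delicate point is the minor calibration between the smoothness exponent $\gamma$ and the Lipschitz nature of the translation in $y$, which is resolved by the $\min$-interpolation above.
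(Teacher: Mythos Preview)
Your proof is correct and follows essentially the same approach as the paper: both arguments split according to whether $2^{j}|x-y|$ is large or small (you via the $\min$-interpolation, the paper by choosing a threshold scale $j_{0}$), use the trivial $L^{1}$ bound in the first regime and the mean value theorem in the second, and both are effectively restricted to $\gamma \in (0,1]$. The only cosmetic difference is that the paper invokes the Poisson summation formula to pass from the periodic kernel $\check{\varrho}_{j}$ to the Schwartz kernel on $\RR^{d}$, which is the rigorous justification behind your scaling identities $\|\check{\varrho}_{j}\|_{L^{1}} \lesssim 1$ and $\|\nabla \check{\varrho}_{j}\|_{L^{1}} \lesssim 2^{j}$ on the torus.
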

\begin{proof}
  We divide the proof in two
  steps. Recall that by definition we have to bound \(\sup_{j \ge -1}2^{- \gamma
  j} \| \Delta_j (\delta_{x}-\delta_{y})\|_{L^1}\).
  Hence we choose $j_0$ as the smallest integer such that $2^{-j_0}
  \le |x - y|.$ We first look at small scales $j \ge j_0$ and then
  at large scales $j < j_0.$ For small scales, by the Poisson summation formula,
  since \(\varrho_{j}(k) = \varrho_{0}(2^{{-} j} k)\), and by defining
  \(K_{j}(x) = \mF_{\RR}^{{-} 1} \varrho_{j} (x)= 2^{j}K(2^j x)\) for some
  \(K \in \mS(\RR)\) (the space of tempered distributions):
  \begin{align*}
    2^{- \gamma j}\big\| \Delta_j (\delta_{x}- \delta_{y})
    \big\|_{L^1} & \le |x - y|^{\gamma} \int_{\RR} \  2^j  | K (2^j(z-x)) -
    K (2^j(z-y))| \ud z \\
    & \lesssim |x - y|^{\gamma} \int_{\RR} 2^j
    |K(2^j z)| \ud z \lesssim |x-y|^{\gamma}.
  \end{align*}
  While for large scales, since we have $|2^j(x {-} y)| \le 1$, applying the
  Poisson summation formula, by the mean value theorem and since \(K \in
  \mS(\RR)\) (the Schwartz space of functions):
  \begin{align*}
    2^{- \gamma j}\big\| \Delta_j (\delta_{x}- \delta_{y}) \big\|_{L^1} & \le
    2^{- \gamma j} \int |K (z) - K (z+ 2^j(x{-} y))| \ud z\\
    & \le  |x-y|^{\gamma} \int \max_{| \xi - z | \le 1}\frac{| K(\xi) -
    K(z)|}{|\xi - z |^{\alpha}} \ud z  \lesssim |x-y|^{\gamma}.
  \end{align*}
  The result follows.

\end{proof}

\begin{lemma}\label{lem:translation-of-enhanced-noise}
  Fix any \(\alpha < \frac 1 2\). Consider the space
  \[ 
    \mY_{\mathrm{kpz}} \subseteq C_{\mathrm{loc}}([0, \infty); C^{\alpha}
    \times C^{2 \alpha}\times C^{\alpha+ 1} \times C^{2\alpha+ 1} \times C^{2\alpha+ 1}
    \times C^{2 \alpha - 1}), 
  \] 
  with the norm \(\| \cdot \|_{\mY_{\mathrm{kpz}}}\) as in \cite[Definition
  4.1]{GubinelliPerkowski2017KPZ}. There exists a random variable \(\YY \colon
  \Omega_{\mathrm{kpz}} \to \mY_{\mathrm{kpz}}\) which coincides almost surely
  with the random variable constructed in \cite[Theorem
  9.3]{GubinelliPerkowski2017KPZ} and is given by:
  \[ \YY(\omega) = (Y(\omega), Y^{\TA}(\omega), Y^{\TB}(\omega),
  Y^{\TC}(\omega), Y^{\TD}(\omega), \partial_{x} \mathcal{P} \reso \partial_{x}Y (\omega)), \]
  where the latter solve (formally):
  \begin{align*}
    (\partial_{t} - \partial_{x}^{2}) Y &= \Pi_{\times} \xi,\\
    (\partial_{t} - \partial_{x}^{2}) Y^{\TA} &= (\partial_{x}
    Y)^{2} - \infty,\\
    (\partial_{t} - \partial_{x}^{2})Y^{\TB} & =  \partial_{x}Y
    \partial_{x} Y^{\TA},\\
    (\partial_{t} -\partial_{x}^{2}) Y^{\TC} &= \partial_{x} Y^{\TB} \reso
    \partial_{x}Y - \infty,\\
    (\partial_{t} - \partial_{x}^{2}) Y^{\TD} & = (\partial_{x} Y^{\TA}
    )^{2} - \infty\\
    (\partial_{t} - \partial_{x}^{2}) \mathcal{P} &= \partial_{x}Y.
  \end{align*}
  Here \(\Pi_{ \times } f = f - \int f(x) \ud x\) and \(f \reso g =
  \sum_{|i - j| \leq 1} \Delta_{i}f \Delta_{j} g\) is the resonant product
  between two distributions (which is a-priori ill-defined). Finally, the
  presence of infinity indicates the necessity of Wick renormalisation, in the
  sense of \cite[Theorem 9.3]{GubinelliPerkowski2017KPZ}. \(Y\) is started in
  invariance, that is:
  \[ Y_{t} = \int_{- \infty}^{t} P_{t-s} \Pi_{\times} \xi \ud s, \]
  while all other elements are started in \(Y^{\tau}(0) =0\). In particular
  \(\YY\) changes as follows under the action of \(\vt^{n}\), for \(n \in
  \NN_{0}, t \geq 0, \omega \in \Omega_{\mathrm{kpz}}\):
  \begin{equation}\label{eqn:translation-on-enhanced=noise}
  \begin{aligned}
    \YY_{t}(\vt^{n} \omega) = ( & Y_{t+n} , Y^{\TA}_{t+n} {-}
      P_{t}Y_{n}, Y^{\TB}_{t+n} {-} P_{t} Y^{\TB}_{n}, \\
      & Y^{\TC}_{t+n } {-} 
    P_{t} Y_{n}^{\TC}, Y^{\TD}_{t +n} - P_{t} Y^{\TD}_{n}, \partial_{x}
      (\mathcal{P}_{t+n} {-}   P_{t} \mP_{n}) \reso \partial_{x} (Y_{t+n} {-}
    P_{t} Y_{n}))(\omega).
  \end{aligned}
\end{equation}
\end{lemma}

\begin{proof}
  The only point that requires a proof is the action of the translation
  operator. By taking into account the initial conditions and using \cite[Theorem 9.3]{GubinelliPerkowski2017KPZ},
  Equation~\eqref{eqn:translation-on-enhanced=noise} holds for fixed
  \(n\), for all \(\omega \not\in N_{n}\) and all \(t \geq 0\), for a given null-set
  \(N_{n}\) (since the random variables are constructed in
  \(L^{2}(\Omega_{\mathrm{kpz}}; \mY_{\mathrm{kpz}})\)). Considering
    \(N = \bigcup_{n \in \NN} N_{n}\) and setting \(\YY(\omega) =0\) for
    \(\omega \in N,\) one obtains the result for all \(\omega \in
    \Omega_{\mathrm{kpz}}\).
\end{proof}

\begin{lemma}\label{lem:convergence-series}
Consider a sequence \(\{a_{k}\}_{k \in \NN}\) of positive \( (a_{k} \geqslant
0)\) real numbers. Suppose that
\[ S_{n} = \frac{1}{n} \sum_{k =1}^{n} a_{k}\]
converges, namely that there exists a \(\sigma \in [0, \infty)\) such that
\[ \lim_{n \to \infty} S_{n} = \sigma.\]
Then
\[ \lim_{n \to \infty} \frac{1}{n} a_{n} =0. \]  
\end{lemma}
\begin{proof}
Since \(S_{n}\) is convergent fix any \(\ve>0\) and let \(n(\ve) \in \NN\) be
such that 
\[ |S_{n} - S_{m}| \leqslant \ve, \quad \forall n,m \geqslant n(\ve).\]
We can assume, up to taking a larger \(n(\ve)\), that \(n(\ve) \geqslant
\frac{\sigma + \ve}{\ve}\). Now consider \(n \geqslant n(\ve) + 1\). We can
compute
\begin{align*}
\ve \geqslant |S_{n} - S_{n-1}| & = \Big\vert \frac{a_{n}}{n} - \left(  \frac{1}{n-1}
-\frac{1}{n} \right) \sum_{k = 1}^{n-1} a_{k}  \Big\vert \\
& = \Big\vert \frac{a_{ n}}{ n} - \frac{1}{n} S_{n-1} \Big\vert \\
& \geqslant \frac{a_{n}}{n} - \frac{S_{n-1}}{n} \geqslant  \frac{a_{n}}{n} -
\ve \frac{\sigma + \ve}{ \sigma + \ve},
\end{align*}
which implies that \( \frac{a_{n}}{n} \leqslant 2 \ve \) for all
\(n \geqslant n (\ve)+1\). Since \(\ve\) is arbitrary this completes the proof.
\end{proof}

\begin{lemma}\label{lem:interpolation-bound-general-regularity}
Consider any \(\beta \in (0, \infty), \ \alpha \in (0, \beta)\) and let \(\theta = \frac{\alpha}{\beta} \in (0,1)\). Then there exists a
constant \(C(\alpha, \beta)> 0\) such that for every \(f \in C^{\beta}\):
\[ [f]_{\alpha} \leqslant C(\alpha, \beta) \| f \|_{\infty}^{1-
\theta}[f]_{\beta}^{\theta}.\] 
\end{lemma}

\begin{proof}
We start by recalling, for \(k \in \{1, \ldots, d\}\), the one-dimensional Landau-Kolmogorov inequality
(see for example \cite{Kolmogorov1949Interpolation} or many online resources):
\begin{align*}
\| \partial_{x_{k}} f \|_{\infty} \lesssim \| f \|_{\infty}^{1 -
\frac{1}{n}} \| \partial_{x_{k}}^{n} f \|_{\infty}^{\frac{1}{n}},
\end{align*}
Iterating this inequality one obtains that for any \(n,l \in \NN\) and
\(k_{i} \in \{1, \ldots, d\}, \ \forall i =1, \ldots, l\):
\begin{align*}
\| \partial_{x_{k_{1}}} \cdots \partial_{x_{k_{l}}} f \|_{\infty} & \lesssim
\| \partial_{x_{k_{2}}} \cdots \partial_{x_{k_{l}}} f
\|_{\infty}^{1 - \frac{1}{n+1}} [ f ]_{n+l}^{\frac{1}{n+1}} \\
& \lesssim \| f \|_{\infty}^{\prod_{i = 1}^{l}\big(1- \frac{1}{n+i}\big)} [
f ]_{n+l}^{\sum_{i=1}^{l} \frac{1}{n+i} \prod_{j=1}^{i-1}\big( 1-
\frac{1}{n+j} \big)}.
\end{align*}
Since (both identities can be proven by induction over \(l\)):
\begin{align*}
\prod_{i=1}^{l}\Big( 1- \frac{1}{n+i} \Big)= 1 - \frac{l}{n+l}, \qquad
\sum_{i=1}^{l} \frac{1}{n+i} \prod_{j=1}^{i-1}\Big( 1-
\frac{1}{n+j} \Big)= \frac{l}{n+l},
\end{align*}
we have proven that
\begin{equation}\label{eqn-proof-interpolation-integer}
[ f ]_{l} \leqslant  C(l,n+l) \|
f \|_{\infty}^{1 - \frac{l}{n+l}} [
f ]_{n+l}^{\frac{l}{n+l}},
\end{equation}
which is the desired inequality for integer \(\alpha, \beta\). To pass to the
fractional case we will first prove that for \(\beta \geqslant n, n \in \NN\):
\begin{equation}\label{eqn:proof-interpolation-fractional-first}
[f]_{n} \lesssim \| f \|_{\infty}^{1 - \frac{n}{\beta}}
[f]_{\beta}^{\frac{n}{\beta}}.
\end{equation}
We can further simplify this by considering \(\beta \in (1, 2)\) and proving:
\begin{align*}
\| \partial_{x_{k}} f \|_{\infty} \leqslant 2 \| f \|_{\infty}^{1 -
\frac{1}{\beta}} [f]_{\beta}^{\frac{1}{\beta}}
\end{align*}
To obtain this let \(e_{k}\) be the unit vector in the \(k-\)th direction, and
consider for \(h > 0, x \in \TT^{d}\):
\begin{align*}
\partial_{x_{k}} f(x) = \frac{f(x + h e_{k}) -f(x)}{h} + R(x,h).
\end{align*}
Since
\[  \frac{f(x + h e_{k}) -f(x)}{h} = \partial_{x_{k}} f(\xi),\]
for some \(\xi \in [x, x+h e_{k}]\) (were \([x, x+h e_{k}]\) is the line
between \(x\) and \(x +h e_{k}\)), we can bound the rest term by:
\[ |R(x,h)| \leqslant \sup_{\xi \in [x, x+h e_{k}]} | \partial_{x_{k}} f
(x) - \partial_{x_{k}}f(\xi)| \leqslant h^{\beta-1}[ f
]_{\beta}.\]
Hence we have
\begin{align*} \| \partial_{x_{k}} f \|_{\infty} &  \leqslant h^{-1} \| f \|_{\infty} +
h^{\beta-1}[f]_{\beta}\\
& \leqslant 2\| f \|_{\infty}^{1 - \frac{1}{\beta}}
[f]_{\beta}^{\frac{1}{\beta}} 
\end{align*}
by setting \(h =  (\| f \|_{\infty}/[f]_{\beta})^{\frac{1}{\beta}}\). Next, we
deduce~\eqref{eqn:proof-interpolation-fractional-first} for 
any \(\beta > 1\) and \(n= \lfloor \beta \rfloor\). Using all the estimates we already derived:
\begin{align*}
[f]_{n} & \lesssim [f]_{n-1}^{1 - \frac{1}{\beta -n
+1}}[f]_{\beta}^{\frac{1}{\beta - n +1}} \\
& \lesssim \| f \|_{\infty}^{\big( 1 - \frac{n-1}{n} \big) \big( 1 -
\frac{1}{\beta - n+1}\big)} [f]_{n}^{\big( 1 - \frac{1}{\beta- n +1} \big)
\frac{n-1}{n}} [f]_{\beta}^{\frac{1}{\beta- n +1}} \\
\iff [f]_{n}^{\zeta} & \lesssim \| f \|_{\infty}^{\big( 1 - \frac{n-1}{n} \big) \big( 1 -
\frac{1}{\beta - n+1}\big)} [f]_{\beta}^{\frac{1}{\beta- n +1}},
\end{align*}
for 
\[ \zeta = 1 - \Big( \frac{\beta - n}{\beta - n +1} \Big) \Big(
\frac{n-1}{n} \Big) = \frac{\beta}{n(\beta-n+1)},\]
so that the last estimate implies
\eqref{eqn:proof-interpolation-fractional-first} for the chosen \(\beta\) and
\(n\). To conclude the proof
of~\eqref{eqn:proof-interpolation-fractional-first} we have to consider the
case \(\beta >1, n \leqslant \lfloor \beta \rfloor\). We find that:
\begin{align*}
[f]_{n} & \lesssim \| f \|_{\infty}^{1 - \frac{n}{\lfloor \beta \rfloor}}
[f]_{\lfloor \beta \rfloor}^{\frac{n}{\lfloor \beta \rfloor}} \\
& \lesssim \| f \|_{\infty}^{1 - \frac{n}{\lfloor \beta \rfloor}} \Big( \|
f \|_{\infty}^{1 - \frac{\lfloor \beta \rfloor}{\beta}} [f]_{\beta}^{
\frac{\lfloor \beta \rfloor}{\beta}} \Big)^{\frac{n}{\lfloor \beta \rfloor}}\\
& \lesssim \| f \|_{\infty}^{1 - \frac{n}{\beta}}
[f]_{\beta}^{\frac{n}{\beta}}.
\end{align*}
At this point, we can collect all our results to complete the proof. Consider
\(k, n \in \NN_{0}\) such that \(\alpha \in [k, k+1)\) and \(\beta \in [n, n+1)\).
Of course \(n \geqslant k\). Furthermore, define
\[ \alpha^{\prime} = \alpha - k, \qquad \beta^{\prime} = \beta - n.\]
\textit{Step 1: \(n =k\).} Note that one can bound
\begin{align*}
[f]_{\alpha} \simeq \sum_{| \mu | = k} \sup_{x \neq y} \frac{|
\partial^{\mu} f(x)- \partial^{\mu} f(y)|}{|x-y|^{\alpha^{\prime}}}.
\end{align*}
In fact, if \(n \geqslant 1, f \in C^{\alpha}\), for every \(\mu\) with \(| \mu | = n\) there
exists an \(x_{0} \in \TT^{d}\) such that \(\partial^{\mu} f(x_{0}) =0\), so
that
\begin{align*}
\| \partial^{\mu} f \|_{\infty} \lesssim \sup_{x \neq y} \frac{| \partial^{\mu}
f(x)- \partial^{\mu} f(y)|}{|x-y|^{\alpha^{\prime}}}.
\end{align*}
Hence, using \eqref{eqn:proof-interpolation-fractional-first} we can compute (defining \([f]_{0} = \| f \|_{\infty}\) if \(n=0\)):
\begin{align*}
[f]_{\alpha} & \lesssim \sum_{| \mu | = n} \left( \sup_{x \neq y} | \partial^{\mu} f(x)- \partial^{\mu} f(y)|\right)^{1 -
\frac{\alpha^{\prime}}{\beta^{\prime}}}\left( \sup_{x \neq y} \frac{|
\partial^{\mu} f(x)- \partial^{\mu}
f(y)|}{|x-y|^{\beta^{\prime}}}\right)^{\frac{\alpha^{\prime}}{\beta^{\prime}}} \\
& \lesssim [f]_{n}^{1 - \frac{\alpha^{\prime}}{ \beta^{\prime}}}
[f]_{\beta}^{\frac{\alpha^{\prime}}{\beta^{\prime}}}\\
& \lesssim \| f \|_{\infty}^{\big( 1 - \frac{\alpha^{\prime}}{\beta^{\prime}}
\big) \big( 1 - \frac{n}{\beta} \big)}
[f]_{\beta}^{\frac{\alpha^{\prime}}{\beta^{\prime}} +\frac{n}{\beta}\big(1 -
\frac{\alpha^{\prime}}{\beta^{\prime}}\big)}\\
& \lesssim \| f \|_{\infty}^{1 - \frac{\alpha}{\beta}}
[f]_{\beta}^{\frac{\alpha}{\beta}},
\end{align*}
which is the required result. \\
\textit{Step 2: \(k < n\).} Here we compute, following the same steps as above:
\begin{align*}
[f]_{\alpha} & \lesssim [f]_{k}^{1 - \alpha^{\prime}}
[f]_{k+1}^{\alpha^{\prime}}\\
& \lesssim \| f \|_{\infty}^{(1 - \alpha^{\prime}) \big( 1 - \frac{k}{\beta}
 \big)+ \alpha^{\prime} \big(1 - \frac{k+1}{\beta}\big) } [f]_{\beta}^{(1 -
\alpha^{\prime}) \frac{k}{ \beta} + \alpha^{\prime} \frac{k+1}{\beta} }\\
& \lesssim \| f \|_{\infty}^{1 - \frac{\alpha}{\beta}}
[f]_{\beta}^{\frac{\alpha}{\beta}},
\end{align*}
which completes the proof of the result.
\end{proof}

\begin{lemma}\label{lem:derivatives-of-the-logarithm}
Fix any \(\alpha \in (0, \infty)\) and \(f \in C^{\alpha}\) with \(f(x)>0, \ \forall x \in
\TT^{d}\) and \(\int_{\TT^{d}} f(x) \ud x = 1\).
Then defining \(m(f) = \min_{x \in \TT^{d}} f(x)\) one can bound for some
\( \overline{C}(\alpha)>0\), uniformly over $ f $:
\[ [ \log{(f)} ]_{\alpha} \leqslant \overline{C}(\alpha)
\bigg( \frac{1+[f]_{\alpha}}{m(f)} \bigg)^{\lfloor \alpha \rfloor +1}. \] 
\end{lemma}

\begin{proof}
First, observe that for any multiindex \(\mu\) with \(| \mu| = k \in \NN\) and
\(f\) sufficiently smooth we have a decomposition of the form
\begin{equation}\label{eqn:proof-regularity-logarithm-decomposition}
\partial^{\mu} \log{(f)} = \sum_{1 \leqslant p \leqslant k} \frac{
\sum_{i =1}^{\zeta(p, \mu)} C(i, p, \mu)\prod_{\lambda \in A^{i}(p,
\mu)}(\partial^{\lambda} f)}{f^{p}},
\end{equation}
where \(A^{i}(p, \mu) \subseteq \NN^{d}\) are finite sets of multiindices such
that
\[ |A^{i}(p, \mu)| \leqslant p, \qquad \text{ and } \qquad \lambda \in A^{i}(p, \mu) \Rightarrow | \lambda | \leqslant | \mu|,\] 
and
\(C(i, p, \mu) \in \RR\) are some coefficients (here \(|A^{i}(p, \mu)|\) indicates the
cardinality of the set). One can check by hand that this
decomposition holds true if \(| \mu|=1\). In addition, assuming the decomposition holds true
for some \(\mu \in \NN^{d}\), one has for any \(i \in \{1, \ldots, d\}\)
\begin{align*}
\partial_{x_{i}} \partial^{\mu} \log{(f)} = \sum_{1 \leqslant p \leqslant k} & -p\frac{
\sum_{i =1}^{\zeta(p, \mu)} C(i, p, \mu)\Big( \prod_{\lambda \in A^{i}(p,
\mu)}(\partial^{\lambda} f) \Big) \partial_{x_{i}}f }{f^{p+1}}\\
& + \frac{
\sum_{i =1}^{\zeta(p, \mu)} C(i, p, \mu) \sum_{\lambda^{\prime} \in
A^{i}(p, \mu)} \Big( \prod_{\lambda \in A^{i}(p,
\mu) \setminus \{\lambda^{\prime} \}}(\partial^{\lambda} f)\Big)
(\partial_{x_{i}} \partial^{\lambda^{\prime}}f)}{f^{p}},
\end{align*}
which is again of the required form. Hence by induction the decomposition holds
true for all \(\mu\). \\
To conclude the proof of our result we will now need the following to
inequalities. Fix any \(\alpha^{\prime} \in (0, 1), \ f,g \in C(\TT^{d})\) as
well as any smooth function \(\varphi \colon \mathcal{U} \to \RR,\)
where \(\mathcal{U} \subseteq \RR\) is an open set such that
\(f(\TT^{d}) \subseteq \mathcal{U}\). Then:
\begin{equation}\label{eqn:proof-logratihm-holder-estimates}
[ \varphi(f)]_{\alpha^{\prime}} \leqslant \sup_{x \in \TT^{d}}
|\varphi^{\prime}(f(x)) | [f]_{\alpha^{\prime}}, \qquad [f \cdot
g]_{\alpha^{\prime}} \leqslant \| f \|_{\infty} [g]_{\alpha^{\prime}} +
[f]_{\alpha^{\prime}} \| g \|_{\infty}.
\end{equation}
Both inequalities are immediate consequences of the definition of the H\"older
seminorm. For the first one:
\begin{align*}
[\varphi(f)]_{\alpha^{\prime}} = \sup_{x \neq y \in \TT^{d}} \frac{|
\varphi(f)(x) - \varphi(f)(y)|}{|x-y|^{\alpha^{\prime}}} \leqslant
\sup_{x \in \TT^{d}} | \varphi^{\prime} (f(x)) | [f]_{\alpha^{\prime}},
\end{align*}
while for the second
\begin{align*}
[f \cdot g]_{\alpha^{\prime}} \leqslant \sup_{x \neq y \in \TT^{d}}
\frac{|f(x) - f(y)||g(x)| + |g(x) -g(y)| |f(y)|}{|x - y|^{\alpha^{\prime}}}
\leqslant \| f \|_{\infty} [g]_{\alpha^{\prime}} + [f]_{\alpha^{\prime}} \| g
\|_{\infty}.
\end{align*}
Now we can complete the proof. We find
via~\eqref{eqn:proof-regularity-logarithm-decomposition} that for \(\alpha
\geqslant 1, \alpha^{\prime} = \alpha - \lfloor \alpha \rfloor\):
\begin{align*}
[ \log{(f)}]_{\alpha} & \lesssim \sum_{| \mu | = \lfloor \alpha \rfloor} \sum_{1
\leqslant p \leqslant \lfloor \alpha \rfloor} 
\sum_{i =1}^{\zeta(p, \mu)} \bigg\| \frac{\prod_{\lambda \in A^{i}(p,
\mu)}(\partial^{\lambda} f)}{f^{p}}  \bigg\|_{\infty} + \bigg[ \frac{\prod_{\lambda \in A^{i}(p,
\mu)}(\partial^{\lambda} f)}{f^{p}} \bigg]_{\alpha^{\prime}} \\
& \lesssim \sum_{| \mu | = \lfloor \alpha \rfloor} \sum_{1
\leqslant p \leqslant \lfloor \alpha \rfloor} 
\sum_{i =1}^{\zeta(p, \mu)} \frac{ [f]_{\alpha}^{|A^{i}(p,
\mu)|}}{m(f)^{p}} + \frac{[f]_{\alpha}^{|A^{i}(p, \mu)|+1}}{m(f)^{p+1}}.
\end{align*}
where in the last step we used~\eqref{eqn:proof-logratihm-holder-estimates}.
Now, since \(\int f(x) \ud x =1\) we have that \(m(f) \leqslant 1\). In
addition we have that \(|A^{i}(p, \mu)| \leqslant \lfloor \alpha \rfloor\), so
overall we have that:
\begin{align*}
[ \log{(f)}]_{\alpha} & \lesssim \left(  \frac{1 + [f]_{\alpha}}{m(f)}
\right)^{\lfloor \alpha \rfloor + 1},
\end{align*}
which is the required inequality. The case \(\alpha \in (0,1)\) is much simpler
and follows directly from~\eqref{eqn:proof-logratihm-holder-estimates}.
\end{proof}

\end{document}